\newcommand{\tikzmark}[1]{%
\tikz[overlay,remember picture,inner sep=0pt, outer sep=0pt] \node (#1) {};}
\newcommand{\rond}{\mathcal}
\newcommand{\urg}[1]{\textcolor{red}{#1}}
\newcommand{\satisf}{\vDash}
\newcommand{\et}{\wedge}
\renewcommand{\phi}{\varphi}
\renewcommand{\epsilon}{\varepsilon}
\newcommand{\geqcur}{\succcurlyeq}
\NewDocumentCommand{\set}{mg}{\left\{#1\IfNoValueF{#2}{\;\middle\vert\;#2}\right\}}
\newcommand{\Qp}{\mathds{Q}_p}
\newcommand{\Fp}{\mathds{F}_p}
\newcommand{\IP}[1]{IP$_{\!#1}$\xspace}
\newcommand{\NIP}[1]{N\IP{#1}}
\newcommand{\IPn}{\IP n}
\newcommand{\NIPn}{\NIP n}
\newcommand{\tpp}{\!\cdots\!}
\newcommand{\ignore}[1]{}
\newcommand{\subtitle}[1]{%
  \posttitle{%
    \par\end{center}
    \begin{center}\large#1\end{center}
    \vskip0.5em}%
}
\newcommand{\subtitles}[3]{%
  \posttitle{%
    \par\end{center}
    \begin{center}\large#1\\\normalsize#2\\\small#3\end{center}
    \vskip0.5em}%
}
\DeclareMathOperator{\Gal}{Gal}
\DeclareMathOperator{\id}{id}       
\DeclareMathOperator{\ch}{ch}
\theoremstyle{plain}
\newtheorem{fct}{Fact}
\newtheorem{thm}{Theorem}[section]
\newtheorem*{thm*}{Theorem}
\newtheorem{cor}[thm]{Corollary}
\newtheorem*{cor*}{Corollary}
\newtheorem{lem}[thm]{Lemma}
\newtheorem{prop}[thm]{Proposition}
\newtheorem*{prop*}{Proposition}
\newtheorem{con}[thm]{Conjecture}
\newtheorem{por}[thm]{Porism}
\theoremstyle{definition}
\newtheorem{dfn}[thm]{Definition}
\theoremstyle{remark}
\newtheorem{rem}[thm]{Remark}
\newtheorem{ex}[thm]{Example}
\begin{document}
 
  \title{Artin-Schreier extensions and combinatorial complexity in henselian valued fields}
  \date{\today}
  \author{Blaise \textsc{Boissonneau}\thanks{The author was funded by Franziska Jahnke's fellowship from the Daimler and Benz foundation. This research was also partially funded by the DAAD through the `Kurzstipendien für Doktoranden 2020/21', and the MSRI via the Decidability, Definablility and Computability programme.}}
  
  \maketitle
  \begin{abstract}
   We give explicit formulas witnessing IP, \IPn or TP2 in fields with Artin-Schreier extensions. We use them to control $p$-extensions of mixed characteristic henselian valued fields, allowing us most notably to generalize to the \NIPn context one way of Anscombe-Jahnke's classification of NIP henselian valued fields. As a corollary, we obtain that \NIPn henselian valued fields with NIP residue field are NIP. We also discuss tameness results for NTP2 henselian valued fields.
  \end{abstract}
  
  \section{Introduction}
  
  This paper started with a question: we know by \cite{KSW} that $\Fp((\Gamma))$ has IP, since it has an Artin-Schreier extension; but what formula witnesses it? We answer this question for IP, \IPn and TP2, see \Cref{locKSWH} and \Cref{locCKS}:
  
  \begin{thm}\label{locintro}
   Let $K$ be an infinite field of characteristic $p>0$. Then $$\phi(x,y_1,\tpp,y_n):\exists t\;x=y_1\tpp y_n(t^p-t)$$ has \IPn iff $K$ has an Artin-Schreier extension, and $$\psi(x,yz):\exists t\; x+z=y(t^p-t)$$ has TP2 iff it has infinitely many distinct Artin-Schreier extensions.   
   \end{thm}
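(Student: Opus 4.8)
The plan is to prove each equivalence by treating the two directions separately, reducing everything to the behaviour of the family of $\Fp$-subgroups $\{b\,\wp(K):b\in K^\times\}$ of $(K,+)$, where $\wp(t)=t^p-t$. Observe that $\phi(c;b_1,\dots,b_n)$ holds iff $c\in b_1\cdots b_n\,\wp(K)$ and $\psi(c;b,b')$ holds iff $c+b'\in b\,\wp(K)$; that each $b\,\wp(K)$ is the image of $\wp(K)$ under the additive automorphism ``multiply by $b$''; and that $K$ has an Artin--Schreier extension iff $\wp(K)\ne K$ and infinitely many distinct ones iff $\dim_{\Fp}K/\wp(K)=\infty$ (degree-$p$ Artin--Schreier extensions correspond to $\Fp$-lines in $K/\wp(K)$).

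\emph{The tame directions.} If $\wp(K)=K$ then $b_1\cdots b_n\,\wp(K)=K$ whenever all $b_i\ne0$ and equals $\{0\}$ otherwise, so $\phi$ is equivalent to $(x=0)\vee\bigwedge_i(y_i\ne0)$, a formula with two fibres in $x$, hence stable and a fortiori \NIPn. If $[K:\wp(K)]=q<\infty$, then every instance $\psi(x;b,b')$ is empty, a singleton, or a coset of a subgroup of index $q$. In a putative TP2 array one thins a row to one with all $b$'s nonzero (all-singleton rows being incompatible with path-consistency against any other row) and projects its first $N$ columns to the finite group $K/\bigcap_{j<N}H_j$: there they become $N$ pairwise disjoint cosets of index-$q$ subgroups, forcing $N\le q$. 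So $\psi$ is NTP2, and TP2 forces $\dim_{\Fp}K/\wp(K)=\infty$.

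\emph{An Artin--Schreier extension makes $\phi$ have \IPn.} By the usual compactness reduction it suffices, for each finite $m$, to find $a^k_j\in K^\times$ ($1\le k\le n$, $j<m$) for which the $m^n$ subgroups $H_{\bar\jmath}:=a^1_{j_1}\cdots a^n_{j_n}\,\wp(K)$ form an \emph{independent family}, i.e.\ $K\surject\prod_{\bar\jmath\in m^n}K/H_{\bar\jmath}$. Indeed, since each $K/H_{\bar\jmath}\ne0$, every $S\subseteq m^n$ arises as the zero-pattern of some $c_S$, and then $(a^k_j)$ and $(c_S)$ witness \IPn on the grid $m^n$. Now $K$, being infinite, contains for every $M$ an element $g$ with $1,g,\dots,g^M$ linearly independent over $\Fp$ (a primitive element of a big enough finite subfield if $K/\Fp$ is algebraic, a transcendental otherwise); with $M=m^n-1$ set $a^k_j:=g^{-j\,m^{k-1}}$, so the products $a^1_{j_1}\cdots a^n_{j_n}$ run bijectively through $g^0,g^{-1},\dots,g^{-(m^n-1)}$ and $\{H_{\bar\jmath}\}=\{g^{-e}\,\wp(K):0\le e<m^n\}$. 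Dualising, this family is independent once $(\wp(K))^\perp$ — the $\Fp$-space of Frobenius-invariant additive characters $K\to\Fp$ — is $K$-linearly independent inside the $K$-module $\Hom_{\Fp}(K,\Fp)$ with action $(b\cdot f)(x)=f(bx)$: one rewrites a relation $\sum_e f_e=0$ among annihilators $f_e\in(g^{-e}\wp(K))^\perp$ as $\sum_r c_r\cdot f_r=0$ with the $f_r$ a basis of $(\wp(K))^\perp$ and $c_r\in\operatorname{span}_{\Fp}(1,\dots,g^{m^n-1})$, whence the $c_r$, hence all coefficients, vanish.

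\emph{The main obstacle.} So everything turns on the lemma that $(\wp(K))^\perp$ is $K$-linearly independent in $\Hom_{\Fp}(K,\Fp)$ (for $d:=\dim_{\Fp}K/\wp(K)=1$ this is immediate; the content is in $d\ge2$, including $d=\infty$). The structural input I would isolate first is that $\wp(K)$ is never an $\mathcal O_v$-submodule of $K$ for a nontrivial valuation $v$ — equivalently, the subring $\{b\in K:b\,\wp(K)\subseteq\wp(K)\}$ is never a valuation ring of $K$ — because $v(\wp(x))=p\,v(x)$ can be made arbitrarily negative, so $\wp(K)$ is cofinal downwards for any $v$ and is not a ball about $0$. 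This is exactly what rules out the sole degenerate possibility (all translates $b\,\wp(K)$ pairwise comparable) and lets one bootstrap, inductively and — if helpful — in a saturated elementary extension (where a $g$ of infinite multiplicative order with $\Fp$-independent powers is available), from a single incomparable pair to independent families of arbitrary finite size. Once this is in hand, the $\psi$ case follows: with $\dim_{\Fp}K/\wp(K)=\infty$ the same machinery plus compactness gives $d_0,d_1,\dots\in K^\times$ with $\{d_i\,\wp(K)\}_{i<\omega}$ independent and each of infinite index; then the array whose $i$-th row is $\bigl(x+c_{i,j}\in d_i\,\wp(K)\bigr)_{j<\omega}$, with the $c_{i,j}$ pairwise distinct modulo $d_i\,\wp(K)$, has $2$-inconsistent rows (pairwise disjoint cosets of $d_i\,\wp(K)$) and, along any path, a finite system of congruences modulo an independent family of subgroups, solvable by the Chinese Remainder Theorem — so $\psi$ has TP2.
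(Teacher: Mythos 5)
Your two ``tame'' directions are essentially fine (when $\wp(K)=K$ the formula $\phi$ trivializes, and when $[K:\wp(K)]=q<\infty$ a counting argument in the finite quotients $K/\bigcap_{j<N}H_j$ works --- though note the rows of a TP2 array are only $k$-inconsistent, not pairwise disjoint, so you should replace ``$N$ pairwise disjoint cosets'' by the density bound $N/q\leqslant k-1$). The genuine gap is exactly at the point you yourself label ``the main obstacle''. The implications ``an Artin--Schreier extension gives \IPn'' and ``infinitely many Artin--Schreier extensions give TP2'' are the deep content of Kaplan--Scanlon--Wagner, Hempel and Chernikov--Kaplan--Simon; the paper deliberately does \emph{not} reprove them, but only adds the equivalences ``$\phi$ is \NIPn iff the family $a_1\cdots a_n\wp(K)$ satisfies BSH$_n$'' and ``$\psi$ is NTP2 iff the family $a\wp(K)$ satisfies the CKS coset condition'', and then cites the hard implication. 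Your plan replaces that hard implication by an unproven lemma plus a sketched ``bootstrap'', and the lemma is in fact false.

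Concretely, the claim that $\wp(K)^\perp$ is $K$-linearly independent in $\Hom_{\Fp}(K,\Fp)$ fails for $K=\Fp((t))$. Put $c=\sum_{k\geqslant 0}t^{p^k}$ and consider the residue characters $f_1(x)=\operatorname{res}(x\,dt/t)$ and $f_2(x)=\operatorname{res}(x\,c\,dt/t)$. A direct coefficient computation shows $\operatorname{res}(y^p\,dt/t)=\operatorname{res}(y\,dt/t)$ and $\operatorname{res}(y^pc\,dt/t)=\operatorname{res}(yc\,dt/t)$ for every $y$, so both $f_1$ and $f_2$ vanish on $\wp(K)$; they are $\Fp$-independent because $c\notin\Fp$ and the residue pairing is nondegenerate; yet $f_2=c\cdot f_1$, a nontrivial $K$-linear relation. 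Equivalently, $\wp(K)+c^{-1}\wp(K)\subseteq\ker f_2\subsetneq K$, so translates of $\wp(K)$ by $\Fp$-linearly independent elements need \emph{not} form an independent family, and ``all translates pairwise comparable'' is not the sole degenerate possibility that your valuation-theoretic observation about $\mathcal O_v$-submodules would have to exclude. Your dualisation step is correct, but the input it needs is simply unavailable at this level of generality: the parameters $a^k_j$ (and the $d_i$ in the TP2 construction) must be chosen via the genuinely algebro-geometric arguments of the cited papers, or one must settle for the much weaker failure of the BSH$_n$/CKS chain conditions, which is the route the paper takes. As it stands, both hard directions of your proposal are unproved, and the stated key lemma cannot be repaired as formulated.
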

   
   We can use this formula to witness complexity in henselian valued fields of mixed characteristic, allowing us to prove that \NIPn henselian valued fields obey the same conditions than NIP fields (see \cite{AJ-NIP}):
   
\begin{thm}\label{AJ1wayintro}
Let $(K,v)$ be a $p$-henselian valued field. If $K$ is \NIPn, then either:
\begin{enumerate}
\item\label{forbelowintr}$(K,v)$ is of equicharacteristic and is either trivial or SAMK, or
\item $(K,v)$ has mixed characteristic $(0,p)$, $(K,v_p)$ is finitely ramified, and $(k_p,\overline{v})$ satisfies condition \ref{forbelowintr} above, or
\item $(K,v)$ has mixed characteristic $(0,p)$ and $(k_0,\overline{v})$ is AMK.
\end{enumerate}
\end{thm}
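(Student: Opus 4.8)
The plan is to follow the proof of the corresponding implication for NIP in \cite{AJ-NIP} essentially line by line, the only genuinely new ingredient being that every appeal there to the Kaplan-Scanlon-Wagner theorem --- an infinite NIP field of characteristic $p$ is Artin-Schreier closed, together with what one deduces from it in a henselian valued field --- is replaced by the explicit witness $\phi$ of \Cref{locintro} and its valued-field refinement \Cref{locKSWH}; since these are formulas of a fixed arity $n$, ``NIP'' can be weakened to ``\NIPn'' throughout, while the coarsening decompositions and the Ax-Kochen-Ershov style transfers of \cite{AJ-NIP} never use NIP except through that one theorem. Concretely, if $v$ is trivial then $(K,v)$ is equicharacteristic and we are in case \ref{forbelowintr} (this includes finite $K$), so assume $v$ non-trivial, hence $K$ infinite, and split on $\Char K$ and $\Char Kv$.

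\textbf{Equicharacteristic case.} If $\Char K = \Char Kv = 0$, then in residue characteristic $0$ there is no defect, so $(K,v)$ has no proper immediate algebraic extension, and the Kaplansky conditions are automatic; hence $(K,v)$ is SAMK, which is case \ref{forbelowintr}. If $\Char K = \Char Kv = p$, then $K$ is infinite and \NIPn, so by \Cref{locintro} the formula $\phi(x,y_1,\tpp,y_n)$ does not have \IPn, i.e.\ $K$ has no Artin-Schreier extension; by \Cref{locKSWH} this forces $Kv$ to be Artin-Schreier closed, $vK$ to be $p$-divisible, $Kv$ to be perfect, and $(K,v)$ to be separably algebraically maximal --- i.e.\ $(K,v)$ is SAMK, which is again case \ref{forbelowintr}.

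\textbf{Mixed characteristic $(0,p)$.} Let $v_p$ be the coarsest coarsening of $v$ with residue characteristic $p$ and $v_0$ the finest with residue characteristic $0$, so $v_0 \leq v_p \leq v$, and let $\overline v$ denote the valuation induced by $v$ on the appropriate residue field; then $(k_p,\overline v)$ is equicharacteristic $p$, $(k_0,\overline v)$ is again of mixed characteristic $(0,p)$, and all the valuations involved are $p$-henselian. The reduction of the question to $(k_p,\overline v)$ and $(k_0,\overline v)$ goes exactly as in \cite{AJ-NIP}; then I would dichotomize on whether $(K,v_p)$ is finitely ramified. If it is, applying \Cref{locKSWH} to $(k_p,\overline v)$ --- one lifts an Artin-Schreier extension of $k_p$ to a degree-$p$ extension of $K$ via $p$-henselianity and feeds parameters of suitable $v$-value into the witnessing formula --- forces $(k_p,\overline v)$ to be SAMK, exactly as in the equicharacteristic-$p$ case; this is case 2. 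If $(K,v_p)$ is not finitely ramified, then the infinite ramification of $v_p$ gives room to realise the \IPn pattern inside $K$ unless $(k_0,\overline v)$ is as tame as possible --- $p$-divisible value group, perfect Artin-Schreier closed residue field, and no proper immediate algebraic extension --- that is, unless $(k_0,\overline v)$ is AMK; hence \NIPn forces case 3.

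\textbf{Main obstacle.} The delicate case is mixed characteristic with $v_p$ not finitely ramified: one has to combine the infinite ramification with any wildness of $(k_0,\overline v)$, in particular genuine defect of a degree-$p$ extension, into one definable family over $K$ carrying the \IPn pattern, and the parameters must be chosen so that, once threaded through the tower $v_0 \leq v_p \leq v$, they really lie in $K$ itself. The equicharacteristic-$p$ case is by comparison immediate, since there the witnessing formula $\phi$ lives on the field on the nose.
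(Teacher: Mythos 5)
Your framing premise is off in one important place, and that place is exactly where the paper's new content lies. Anscombe--Jahnke's forward direction does \emph{not} use NIP only through Kaplan--Scanlon--Wagner: it also uses Shelah's expansion theorem, via external definability of the standard decomposition, to put the coarsenings $v_0,v_p$ into the language while preserving NIP, and this is precisely the step that is unavailable for \NIPn (it already fails for the random graph). The paper's replacement is the explicit lifting lemma: if some coarsening of $v$ of residue characteristic $p$ has a residue field that is infinite and not Artin--Schreier closed, then the formula $\exists t\,x=y_1\cdots y_n(t^p-t)$ has \IPn \emph{in the pure field} $K$, because by $p$-henselianity the simple roots of $\alpha^1_{j_1}\cdots\alpha^n_{j_n}(T^p-T)-\beta_J$ lift, so the coarsening never enters the language. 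You do invoke this lifting for $(k_p,\overline v)$, so your equicharacteristic $p$ and finitely ramified cases are essentially the paper's \Cref{->pp} and the first half of \Cref{->0p} (modulo one slip: perfection of the residue does not follow from \Cref{locKSWH} alone; it needs the explicit polynomial $X^p-mX-a$ with $v(m)>0$ from the proof of \Cref{->pp}, and in the equicharacteristic 0 clause you cannot get SAMK from ``no defect'' when no henselianity is assumed, since the henselization is a proper immediate separable algebraic extension; the body version of the theorem claims nothing there).

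The genuine gap is the infinitely ramified mixed characteristic case, which you flag as the main obstacle but then only restate: ``infinite ramification gives room to realise the \IPn pattern unless $(k_0,\overline v)$ is AMK'' is the conclusion, not an argument, and the route you gesture at (packaging defect of $(k_0,\overline v)$ into one definable family over $K$ carrying \IPn) is not how the paper proceeds and no such formula is exhibited anywhere; algebraic maximality is never obtained from Artin--Schreier lifting. The paper's \Cref{->0p} instead argues: pass to an $\aleph_1$-saturated $(K^*,v^*)$, where $\Delta_0^*/\Delta_p^*$ is dense and by saturation equal to $\mathds R$, hence $p$-divisible; $(k_p^*,\overline{v^*})$ is non-trivial by saturation and SAMK by lifting plus \Cref{->pp}; perfection of $k_p$ comes from \Cref{resperf} applied inside a second saturated extension of $(K^*,v_p^*)$, whose $p$-adic valuation acquires a proper coarsening of residue characteristic $p$; this upgrades $(k_p^*,\overline{v^*})$ from separably algebraically maximal to algebraically maximal; then $(k_0^*,\overline{v_p^*})$ is maximal by saturation, hence defectless, the composition of defectless valuations is defectless, and defectlessness is first order, so it descends to $(K,v)$ and yields algebraic maximality of $(k_0,\overline v)$. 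None of these steps (saturation to $\mathds R$, the perfection argument via \Cref{resperf}, the defectless-composition and first-order transfer) appear in your proposal, so the AMK case is not proved.
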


Combining it with the original result by Sylvy Anscombe and Franziska Jahnke from \cite{AJ-NIP}, this gives, among others, the following corollary:

\begin{cor}
 Let $(K,v)$ be a \NIPn henselian valued field. If $k$ is NIP, then $(K,v)$ is NIP.
\end{cor}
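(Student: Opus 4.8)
The plan is to feed the output of \Cref{AJ1wayintro} into the converse (sufficiency) direction of Anscombe--Jahnke's classification of NIP henselian valued fields from \cite{AJ-NIP}. First note that a henselian valued field is in particular $p$-henselian, so \Cref{AJ1wayintro} applies to $(K,v)$ unchanged: since $K$ is \NIPn, the pair $(K,v)$ lies in one of the three cases listed there. Recall also that every ordered abelian group is NIP (Gurevich--Schmitt), so each value group occurring in the Ax--Kochen--Ershov decomposition of $(K,v)$ is NIP for free; thus the only non-automatic hypothesis of the relevant transfer theorems is NIP of the residue field, which we are handed.

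I would then run through the three cases. If $v$ is trivial, $(K,v)$ is interdefinable with $K=k$, which is NIP. If $(K,v)$ is equicharacteristic and SAMK, apply the corresponding implication of \cite{AJ-NIP}: a SAMK field with NIP residue field and NIP value group is NIP. In the mixed-characteristic cases, first handle the canonical coarsening: its residue field carries an induced henselian valuation $\overline v$ --- namely $(k_p,\overline v)$, resp.\ $(k_0,\overline v)$ --- whose residue field is $k$ (or is interpretable in $k$), hence NIP, and whose value group is an ordered abelian group, hence NIP; moreover by \Cref{AJ1wayintro} this intermediate valued field again satisfies case \ref{forbelowintr} (resp.\ is AMK), so the equicharacteristic discussion shows it is NIP. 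One then transfers across the coarsening: a finitely ramified mixed-characteristic henselian valued field with NIP residue field is NIP, and an AMK mixed-characteristic henselian valued field with NIP residue field and NIP value group is NIP --- these are exactly the sufficiency statements of \cite{AJ-NIP}. Since composing the coarsening with $\overline v$ recovers $(K,v)$ up to interpretability, $(K,v)$ is NIP.

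The only real work is the bookkeeping: verifying that the hypotheses required by the sufficiency direction of \cite{AJ-NIP} in each case (finite ramification, the Kaplansky and algebraic-maximality conditions, NIP of the intermediate residue field and value group) are precisely the data returned by \Cref{AJ1wayintro}. This is essentially by construction --- \Cref{AJ1wayintro} was phrased so as to mirror, clause for clause, the classification in \cite{AJ-NIP} --- so the two results dovetail and the corollary drops out. I expect no genuine obstacle here beyond this matching, plus the routine fact that structures interpretable in $(K,v)$, such as the residue fields and value groups of the canonical coarsenings, inherit \NIPn, which is what makes the (finite) descent through the decomposition legitimate.
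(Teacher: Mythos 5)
Your proposal is correct and matches the paper's own argument: henselian implies $p$-henselian, so \Cref{AJ1way} puts $(K,v)$ in one of the listed cases, and then the sufficiency (transfer) direction of Anscombe--Jahnke's classification, with $k$ NIP, yields that $(K,v)$ is NIP. The extra bookkeeping you describe (NIP value groups, case-by-case matching) is exactly what the paper leaves implicit in citing the full classification.
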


As for NTP2 henselian valued fields, we prove in \Cref{sec-ntp2}, using again explicit formulas, that NTP2 henselian valued fields obey strong tameness conditions:

\begin{prop}
 Let $K$ be NTP2 and $v$ be $p$-henselian. Then $(K,v)$ is either
 \begin{enumerate}
  \item of equicharacteristic 0, hence tame, or
  \item of equicharacteristic $p$ and semitame, or
  \item of mixed characteristic with $(k_0,\overline v)$ semitame, or
  \item of mixed characteristic with $v_p$ finitely ramified and $(k_p,\overline v)$ semitame.
 \end{enumerate}
 In particular, $(K,v)$ is gdr.
\end{prop}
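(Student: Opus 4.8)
The plan is to argue by contraposition. Assuming $K$ is NTP2, so that no formula over $K$ has TP2, I would show $(K,v)$ must fall into one of the four clauses; the mechanism is that a simultaneous violation of all four is detected by an explicit formula built from the TP2‑witness $\psi$ of Theorem~\ref{locintro} together with the $\emptyset$‑definability in $K$ of a suitable $p$‑henselian valuation ring (after Koenigsmann--Jahnke), in the same spirit in which the formula $\phi$ is turned into a witness of \IPn in the proof of Theorem~\ref{AJ1wayintro}. As preliminary reductions: if $v$ has residue characteristic $0$ then $(K,v)$ is of equicharacteristic $0$, hence tame, which is clause~(1), and tame valued fields are gdr; and the trivially valued case is immediate. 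So assume the residue characteristic is $p>0$ and $v$ is non-trivial.

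Suppose first $(K,v)$ is of equicharacteristic $p$. Then $\ch K=p$, so Theorem~\ref{locintro} applies directly to $K$: since $\psi$ does not have TP2 over $K$, the field $K$ has only finitely many Artin--Schreier extensions, i.e.\ $[K:\wp(K)]<\infty$, where $\wp(x)=x^p-x$. I would then run the Artin--Schreier calculus over $(K,v)$ --- which uses henselianity only for degree-$p$ polynomials, so $p$-henselianity suffices: if $\Gamma_v\neq p\Gamma_v$, fix $\gamma\in\Gamma_v\setminus p\Gamma_v$ and pick elements of value $-n\gamma$ for $n$ coprime to $p$ to obtain $\mathbb{F}_p$-linearly independent classes in $K/\wp(K)$; if $Kv$ is imperfect, feed a residue not lying in $(Kv)^p$ in at unboundedly negative values to do the same. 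Either way $[K:\wp(K)]=\infty$, a contradiction; hence $\Gamma_v$ is $p$-divisible and $Kv$ is perfect, i.e.\ $(K,v)$ is semitame --- clause~(2) --- and semitame valued fields are gdr.

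Suppose now $(K,v)$ is of mixed characteristic $(0,p)$. I would pass to the canonical coarsenings: $v_0$, obtained by dividing by the smallest convex subgroup of $\Gamma_v$ containing $v(p)$, so that $k_0:=Kv_0$ has characteristic $0$ and $(k_0,\overline v)$ is of mixed characteristic; and $v_p$, obtained by dividing by the largest convex subgroup not containing $v(p)$, so that $k_p:=Kv_p$ has characteristic $p$ and $(k_p,\overline v)$ is of equicharacteristic $p$; both are $p$-henselian with common residue field $k$. Now split on whether $v_p$ is finitely ramified. If it is, it suffices to show $(k_p,\overline v)$ is semitame (clause~(4)): were the value group of $\overline v$ on $k_p$ not $p$-divisible, or $k$ imperfect, the corresponding Artin--Schreier classes of the equicharacteristic-$p$ valued field $(k_p,\overline v)$ could be lifted --- finite ramification of $v_p$ being exactly what keeps the lift controlled --- to infinitely many degree-$p$ extensions of $K$ and hence, via $\psi$ and the definable valuation rings of $v$, $v_0$, $v_p$, to a formula over $K$ with TP2, contradicting NTP2. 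If $v_p$ is not finitely ramified, its value group contains a dense rank-one convex subquotient; the same kind of argument forces that subquotient to be $p$-divisible, $k$ to be perfect and the value group of $\overline v$ on $k_0$ to be $p$-divisible, i.e.\ $(k_0,\overline v)$ semitame --- clause~(3). Finally, each of clauses~(1)--(4) implies that $(K,v)$ is gdr, by the standard characterization of generalized deeply ramified henselian valued fields (of which the four clauses are essentially a repackaging), which gives the last sentence.

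The step I expect to be the main obstacle is, in the mixed characteristic case, actually producing the explicit TP2 formula over $K$ from the residue-level (or value-group-level) Artin--Schreier failure: one must lift enough Artin--Schreier classes of $k_p$ or $k$ to degree-$p$ extensions of $K$ with sufficiently controlled valuation-theoretic behaviour and then verify that the resulting formula genuinely has TP2, not merely IP --- this is precisely the extra content of the $\psi$-part over the $\phi$-part of Theorem~\ref{locintro}, now to be carried out relative to the definable $p$-henselian valuation. Matching up which residue field must be semitame with whether $v_p$ is finitely ramified is the other place where care is needed.
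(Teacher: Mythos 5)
There is a genuine gap, and it is concentrated in what you take ``semitame'' to mean. Your ``Artin--Schreier calculus'' only yields that the value group is $p$-divisible and the residue field perfect; but semitameness (\Cref{tamedef}, after Kuhlmann--Rzepka) carries in addition a defect condition, and that is the hard content. The paper does not reprove it: the equicharacteristic-$p$ case, and the statement that $(k_p,\overline v)$ is semitame in mixed characteristic, are obtained by combining \Cref{locCKS} (NTP2 for $\psi$ gives Artin--Schreier finiteness) with Kuhlmann's theorem \Cref{ASfin->smtm} (AS-finite equicharacteristic-$p$ valued fields are semitame), which is a substantial external input that your two-line computation does not replace. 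The shortfall propagates: in the infinitely ramified mixed-characteristic case the paper still has to work to get clause (3), namely a saturation argument showing $(k_0^*,\overline{v_p^*})$ has value group $\mathds R$ and is maximal, hence defectless, plus first-orderness of defectlessness, perfection of $k_p$ via \Cref{3.4}, and finally the composition lemma \Cref{tamecompo} to pass from semitameness of $(k_0,\overline{v_p})$ and of $(k_p,\overline v)$ to semitameness of $(k_0,\overline v)$. None of these steps appear in your sketch, and without them ``$p$-divisible plus perfect'' does not give clauses (2)--(4).

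Your lifting mechanism is also not the right one. No definable valuation ring enters the paper's argument, and Koenigsmann--Jahnke definability of $p$-henselian valuations needs hypotheses you have not secured; the point of the explicit formula $\psi$ is precisely to avoid putting any valuation in the language. What the paper's lifting lemma transfers, using only $p$-henselianity of the coarsening $v_p$ (or $w$) and simplicity of the roots of $y(t^p-t)-x-z$, is the TP2 \emph{pattern} for $\psi$ from the residue field $k_p$ up to $K$ -- not ``infinitely many degree-$p$ extensions of $K$'', which in mixed characteristic makes no sense since $\Char K=0$. Relatedly, finite ramification of $v_p$ plays no role in making the lift work (the lift needs nothing but $p$-henselianity); the finitely/infinitely ramified dichotomy only decides whether you end up in clause (4) or must run the $(k_0,\overline v)$ argument of clause (3). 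So the correct repair is: in every case reduce, via the lifting lemma, to AS-finiteness of the relevant equicharacteristic-$p$ residue field, quote \Cref{ASfin->smtm}, and in the infinitely ramified case add the saturation, perfection and composition steps described above.
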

  
  \subsection{Combinatorial complexity}
  Dating back to the 70's and the work of Saharon Shelah in \cite{Sh78}, model theorists have found that more often than not, meaningful dividing lines between somewhat easy-to-study theories and more complex ones can be expressed in terms of combinatorial configurations that may or may not be encoded in these theories. The prototypical example of this phenomenon is stability: at first studied in terms of the number of different types a theory can have, an equivalent definition is to say that stable theories can not encode an infinite linear order.
  
  This global-local duality between the behavior of the whole theory and the combinatorial properties of individual formulas gives rise to different approaches to study these notions of complexity. One of these approaches is to study the links with algebraic structures. This goes both ways: given an algebraic structure, we want to know how complex it is, a contrario, if we know that some structure has a certain complexity, we want to describe it algebraically.
  
  We like to think about all these notions as a ladder that we try to climb in order to understand theories which are more and more complex. A nice example of this ladder-climbing is the study of Artin-Schreier extensions, which starts in 1999 with the following remarkable result:
   
   \begin{fct}[\cite{Sca-stbl}]Infinite stable fields of characteristic $p>0$ have no Artin-Schreier extensions.\end{fct}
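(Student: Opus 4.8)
The plan is to prove the contrapositive: any infinite field $K$ of characteristic $p$ admitting an Artin--Schreier extension is unstable --- in fact has IP. Since ``$K$ has an Artin--Schreier extension'', that is $\exists y\,\forall t\,(t^p-t\neq y)$, is a first-order sentence, I may freely pass to a sufficiently saturated elementary extension and assume $K$ is $\aleph_1$-saturated. Put $H:=\wp(K)=\{t^p-t:t\in K\}$. As $\wp\colon x\mapsto x^p-x$ is an additive endomorphism of $(K,+)$ with kernel $\Fp$, the set $H$ is a $\emptyset$-definable additive subgroup; $H\neq K$ by hypothesis, and $H\neq 0$ because $K$ is infinite. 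Being a proper nonzero additive subgroup of a field, $H$ is not an ideal, so there is $a\in K^\times$ with $aH\neq H$, the set $R:=\{r\in K:rH\subseteq H\}$ is a proper subring containing $\Fp$, and $\goth S:=\{r\in K^\times:rH=H\}$ is a proper definable subgroup of $K^\times$.

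The mechanism is the uniformly definable family $\{bH:b\in K^\times\}$ of additive subgroups, $bH$ being defined by $\psi(x;y)\colon\exists t\,(x=y(t^p-t))$; here $bH=b'H$ iff $b/b'\in\goth S$, and $\bigcap_{b\in K^\times}bH$ is an additive subgroup closed under multiplication by all of $K$, hence an ideal properly contained in $1\cdot H\neq K$, hence $\{0\}$. First I would record that $(K,+)$ has no proper definable subgroup $N$ of finite index: the intersection of $N$ with its $K^\times$-dilates is again an ideal $\neq K$, so $\{0\}$, whereas Baldwin--Saxl (valid in NIP, hence in stable, groups) collapses it to a \emph{finite} intersection of finite-index subgroups, forcing $K$ finite. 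A standard companion fact is that $K^\times$ is connected in a stable field, so $\goth S$ has infinite index in $K^\times$ and there are infinitely many distinct dilates. I would then construct an infinite sequence $(b_i)_{i<\omega}$ in $K^\times$ whose dilates are \emph{independent}: for each finite $I$ and each $j\in I$, the group $\bigcap_{i\in I}b_iH$ has infinite index in $\bigcap_{i\in I\setminus\{j\}}b_iH$. Equivalently, $G_n:=\bigcap_{i\le n}b_iH$ gives an infinite, properly descending chain built from finite intersections of instances of $\psi$ --- which contradicts the chain condition satisfied by stable groups, and unwinds to an IP (indeed \IPn) pattern for $\psi$, contradicting the stability of $K$.

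The step I expect to be the main obstacle is the construction of the independent sequence, because the combinatorial hypothesis works against it: Baldwin--Saxl forces intersections from $\{bH\}$ to stabilise after boundedly many steps, so every bit of genuine descent must be extracted from the field structure. The two levers are that $H$ is far from an ideal --- by connectedness of $(K,+)$ a single well-chosen dilate cuts a partial intersection $G_n$ down by \emph{infinite} index --- and that $H$ is large, $\wp$ being finite-to-one; the delicate point is to guarantee that finitely many dilates never intersect to $\{0\}$, so that the recursion does not stall, which one arranges by selecting the $b_i$ among suitable coset representatives of $\goth S$ and invoking $\aleph_1$-saturation. An alternative, and arguably cleaner, route --- the one this paper follows --- avoids the chain-condition bookkeeping entirely by exhibiting the IP (and \IPn) pattern explicitly, built from a sequence that is $\Fp$-linearly independent modulo $H$ together with suitably chosen parameters $b_J$, as recorded in \Cref{locintro}.
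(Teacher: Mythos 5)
The statement you are proving is quoted in the paper only as a citation to Scanlon, and the paper's own discussion of the (NIP-level) analogue explicitly locates the entire difficulty in one step: Baldwin--Saxl applied to the family $\set{a\wp(K)}{a\in K^\times}$ reduces everything to showing that if $\wp(K)\neq K$ then this family \emph{fails} the chain condition, and the paper says this ``is mainly hidden in the very last affirmation'' and defers to the original articles. Your proposal follows exactly this route (dilates $bH$ of $H=\wp(K)$, Baldwin--Saxl for stable/NIP groups, an ``independent'' sequence of dilates producing a strictly descending chain of finite intersections), but it leaves precisely that last step unproven: you acknowledge that constructing $(b_i)$ with $\bigcap_{i\in I}b_iH$ of infinite index in $\bigcap_{i\in I\setminus\{j\}}b_iH$ is ``the main obstacle'' and then offer only heuristic levers. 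So the core of Scanlon's theorem is missing from the proof.

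Concretely, the levers you name do not close the gap. Connectedness of $(K,+)$ (no proper definable finite-index additive subgroups) rules out the chain stalling at a \emph{finite-index} drop into $(K,+)$, but it says nothing about the actual failure mode: for a partial intersection $G_n=\bigcap_{i\leqslant n}b_iH$, every candidate dilate $bH$ might satisfy $G_n\cap bH=G_n$ or have finite index in $G_n$, and neither possibility contradicts connectedness of $(K,+)$ or of $K^\times$; likewise, $bH\neq b'H$ (infinitely many $\goth S$-cosets) does not imply that intersecting with a new dilate ever drops the index, let alone by an infinite amount. Choosing the $b_i$ among coset representatives of $\goth S$ and invoking saturation does not address this either. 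The actual arguments of Scanlon and of Kaplan--Scanlon--Wagner get the index drop from genuine algebro-geometric input: one takes the $b_i$ algebraically independent (over a small elementary submodel or the prime field) and analyses the connected algebraic group cut out by the system $t_i^p-t_i=x/b_i$, showing its $K$-points supply infinitely many new cosets at each stage; nothing in your sketch substitutes for this. (Your final sentence also misattributes to this paper an ``explicit $\Fp$-linear independence'' construction: the paper's explicit formulas in \Cref{locintro} still rest on the KSW/Hempel closure theorems, they do not bypass them.) To complete your proof you must supply that index-drop lemma, or cite it, since everything else in your outline is the routine part.
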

   
   It is in fact conjectured that infinite stable fields have no separable extensions whatsoever; this result tells us that, in characteristic $p$, they at least have no separable extension of degree $p$.
   
   In 2011, this result was pushed up the ladder:
   
   \begin{fct}[\cite{KSW}]Infinite NIP fields of characteristic $p>0$ have no Artin-Schreier extensions; simple fields of characteristic $p>0$ have finitely many distinct Artin-Schreier extensions.\end{fct}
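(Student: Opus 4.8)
The plan is to reduce both assertions, via Artin--Schreier theory, to a statement about the multiplicative translates of a single definable additive subgroup, and then run a combinatorial argument on that family. I would write $\wp(x):=x^p-x$, an $\mathbb F_p$-linear endomorphism of $(K,+)$ with kernel $\mathbb F_p$, so that $A:=\wp(K)$ is an infinite $\mathbb F_p$-subspace of $K$, definable by $\exists t\,(t^p-t=x)$. Artin--Schreier theory identifies the degree-$p$ Galois extensions of $K$ with the lines of the $\mathbb F_p$-vector space $K/A$; hence $K$ has an Artin--Schreier extension iff $A\neq K$, and only finitely many iff $[K:A]<\infty$. So it is enough to show: \emph{(i)} an infinite NIP field cannot have $A\neq K$; and \emph{(ii)} if $[K:A]=\infty$ then $K$ is not simple.

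The engine is the uniformly definable family $\{cA:c\in K^\times\}$ of subgroups of $(K,+)$. I would first observe that $B:=\bigcap_{c\in K^\times}cA$ is an additive subgroup closed under multiplication by every element of $K$, i.e.\ an ideal, and $B\subseteq 1\cdot A=A\subsetneq K$, so $B=\{0\}$: the multiplicative translates of $A$ intersect to $0$. For \emph{(i)}, assuming $K$ infinite and NIP and working in a saturated model: the partial type ``$x\neq 0$ and $x\in cA$ for all $c$'' is inconsistent (it would give a nonzero element of $B$), so some finite subintersection $\bigcap_{c\in F}cA$ is already $\{0\}$; by the Baldwin--Saxl chain condition for uniformly definable families of subgroups in NIP theories, this can be arranged over at most $N$ of the translates, where $N$ depends only on the theory. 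Then $|K|=[K:\bigcap_{i\le N}c_iA]\le [K:A]^N$, so if $[K:A]$ were finite $K$ would be finite. Hence an infinite NIP field with an Artin--Schreier extension would satisfy $[K:A]=\infty$, and it remains to show that $[K:A]=\infty$ already forces IP --- which simultaneously handles \emph{(ii)}, since the same construction yields TP2.

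So I would assume $[K:A]=\infty$, whence every $K/cA$ is infinite, and aim for the core claim: for each $n$ there are $c_1,\dots,c_n\in K^\times$ for which $K\to\prod_{i\le n}K/c_iA$ is surjective (and, by compactness, an infinite such sequence exists). Granting this: for IP use $\phi(x;y):\exists t\,(x=y(t^p-t))$, i.e.\ ``$x\in yA$''; surjectivity makes every Boolean atom $\bigcap_{i\in S}c_iA\cap\bigcap_{i\notin S}(K\setminus c_iA)$ nonempty, and any $a_S$ in the $S$-atom satisfies $\models\phi(a_S;c_i)\iff i\in S$. For TP2 use $\psi(x;yz):\exists t\,(x+z=y(t^p-t))$, i.e.\ ``$x\in yA-z$'', with $i$-th row $(c_i,z_{i,j})_{j<\omega}$ where the $z_{i,j}$ enumerate distinct cosets of $c_iA$: each row is $2$-inconsistent because distinct cosets of a subgroup are disjoint, while along any path the resulting system of congruences is finitely satisfiable by surjectivity of the finite product maps, hence satisfiable in a saturated model. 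Thus $\psi$ has TP2 and $K$ is not simple, while $\phi$ has IP, contradicting NIP --- compare the explicit formulas of \Cref{locintro}.

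The hard part will be the core claim: producing, inside an arbitrary infinite field, arbitrarily many multiplicative translates of the fixed proper subspace $A=\wp(K)$ of infinite codimension that are in sufficiently general position (with surjective product-to-quotients map, or at least with enough independence that all Boolean atoms and all path-systems above are consistent). I would attack this by compactness: it suffices to realize, in an elementary extension, the type expressing the finitely many non-containments $\bigcap_{i\in S}c_iA\not\subseteq c_jA$, and one checks this type is consistent using only that $K$ is an infinite field and $A$ a proper $\mathbb F_p$-subspace of infinite codimension. Finding genuinely ``independent'' multiplicative translates is exactly where the multiplicative structure (not just the additive group) is indispensable, and the bookkeeping is most delicate in small characteristic, where there is little slack to avoid spurious containments. (When $[K:A]$ is instead finite and $>1$, no two translates can be complementary, so the construction really needs infinite codimension --- consistently with the dichotomy above.)
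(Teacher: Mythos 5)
Your overall skeleton (Baldwin--Saxl applied to the multiplicative translates of $\wp(K)$, plus a coset array for the simple/TP2 half) is the same as in \cite{KSW} and \cite{CKS}, but there are two genuine gaps. The first is the step ``the partial type $x\neq 0\wedge x\in cA$ for all $c$ is inconsistent (it would give a nonzero element of $B$)''. A realization of that type lives in an elementary extension $K'\succ K$ and yields a nonzero element of $\bigcap_{c\in K^\times}c\,\wp(K')$, which is only a $K$-subspace of $K'$, not an ideal of $K'$ (you may translate only by the old scalars $c\in K^\times$), so $B=\{0\}$ gives no information about this type; note also that your justification makes no use of NIP. Worse, the conclusion you draw --- that some finite subintersection $\bigcap_{c\in F}cA$ is $\{0\}$ --- fails exactly in the case this step was supposed to eliminate: if $1\leqslant[K\colon A]<\infty$ (infinite fields with finitely many but at least one Artin--Schreier extension exist, e.g.\ the bounded PAC fields of \Cref{ditt}), then every translate $cA$ has finite index, so every finite intersection has finite index and is therefore infinite. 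Hence your argument never rules out an infinite NIP field with finitely many Artin--Schreier extensions, and the first assertion of the Fact is not proved.

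The second gap is that your ``core claim'' is the actual theorem, and it cannot be checked ``using only that $K$ is an infinite field and $A$ a proper $\Fp$-subspace of infinite codimension''. At that level of generality it is false: take $K=\Fp(t)$ and $A=\Fp[t]$, an $\Fp$-subspace of infinite codimension; for any $c_1,c_2\in K^\times$ the set $c_1A+c_2A$ is the fractional $\Fp[t]$-ideal generated by $c_1,c_2$, hence cyclic and proper, so $K\to K/c_1A\times K/c_2A$ is never surjective and the path-systems needed for your TP2 rows are never all consistent. Moreover, the non-containments $\bigcap_{i\in S}c_iA\not\subseteq c_jA$ that you propose to realize by compactness are much weaker than surjectivity onto $\prod_i K/c_iA$: they only move you by an order-$p$ subgroup inside each infinite quotient, so they do not yield consistent paths through rows enumerating infinitely many distinct cosets. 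The ``general position'' of the translates must therefore be extracted from the specific arithmetic of $\wp(K)$; in \cite{KSW} and \cite{CKS} this is precisely where the work lies (irreducibility of fibre products of the Artin--Schreier covers $x=c(t^p-t)$ over well-chosen parameters, and producing rational points on them), and it is exactly the step the present paper's proof summary flags as the complexity ``hidden in the very last affirmation''. Until that input is supplied, neither the IP/Baldwin--Saxl half nor the TP2/coset half of your plan goes through.
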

   
   We see here a good example of ladder-climbing; starting with a result in the stable context, it can be extended, sometimes exactly as it is, sometime to a slightly weaker result.
   
   But the ladder continues:
   
   \begin{fct}[\cite{CKS}] NTP2 fields of characteristic $p>0$ have finitely many distinct Artin-Schreier extensions.\end{fct}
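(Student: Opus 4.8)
The plan is to prove the contrapositive: if $K$ is an infinite field of characteristic $p>0$ with \emph{infinitely many} distinct Artin-Schreier extensions, then $K$ has TP2 (finite fields carry exactly one Artin-Schreier extension, so there is nothing to check in that case). The cleanest packaging is to invoke the TP2-part of \Cref{locintro}, which we may assume: it says precisely that $\psi(x;y,z):\exists t\;x+z=y(t^p-t)$ has TP2 exactly when $K$ has infinitely many distinct Artin-Schreier extensions. Since that direction of \Cref{locintro} is itself the content of the Fact --- and is essentially the argument of \cite{CKS} --- I will describe how to produce the witnessing TP2-array. Write $\wp(t)=t^p-t$, so $\wp$ is an $\Fp$-linear endomorphism of $(K,+)$; degree-$p$ Artin-Schreier extensions of $K$ correspond to one-dimensional $\Fp$-subspaces of $K/\wp(K)$, so the hypothesis is $\dim_{\Fp}K/\wp(K)=\infty$, and $\psi$ expresses the relation ``$x+z\in y\,\wp(K)$''.

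First I would pass to a monster model $\mathfrak K\succeq K$, where $\dim_{\Fp}\mathfrak K/\wp(\mathfrak K)$ remains infinite; it then suffices to find parameters $b_{i,j}=(y_i,z_{i,j})\in\mathfrak K^\times\times\mathfrak K$ ($i,j<\omega$) such that each row $\{\psi(x;b_{i,j}):j<\omega\}$ is inconsistent while each path $\{\psi(x;b_{i,f(i)}):i<\omega\}$ is consistent. Row inconsistency amounts to the cosets $y_i\wp(\mathfrak K)-z_{i,j}$ ($j<\omega$) being pairwise disjoint, i.e.\ $z_{i,j}-z_{i,j'}\notin y_i\wp(\mathfrak K)$ for $j\ne j'$; as multiplication by $y_i$ is an additive automorphism, $y_i\wp(\mathfrak K)$ has the same infinite index as $\wp(\mathfrak K)$, so $\omega$-many distinct cosets are available in each row and each row is $2$-inconsistent. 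By compactness, path consistency reduces to: for every finite $F\subseteq\omega$ and every choice $(j_i)_{i\in F}$, the intersection $\bigcap_{i\in F}\big(y_i\wp(\mathfrak K)-z_{i,j_i}\big)$ is nonempty. This holds as soon as the twisted subgroups are arranged so that the quotient map $\mathfrak K\surject\prod_{i\in F}\mathfrak K/y_i\wp(\mathfrak K)$ is onto for all finite $F$ --- a Chinese-remainder-type independence of the family $\{y_i\wp(\mathfrak K)\}_i$. Note that cosets of a single fixed subgroup can never meet both demands, which is exactly why $\psi$ must carry the multiplicative parameter $y$.

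The main obstacle is securing this Chinese-remainder independence of the subgroups $y_i\wp(\mathfrak K)$; the individual indices are never an issue, since each equals $[\mathfrak K:\wp(\mathfrak K)]=\infty$ for any $y_i$. I would build $y_0,y_1,\dots\in\mathfrak K^\times$ recursively, at each stage invoking the infinite $\Fp$-dimension together with saturation to find a new multiplier $y_n$ for which $y_n\wp(\mathfrak K)$ stays ``transverse'' to the finitely many previously chosen twisted subgroups, and I would isolate the needed coset-geometry fact --- transversality of the additive cosets of $a\,\wp(\mathfrak K)$ for elements $a$ that are $\Fp$-independent modulo $\wp(\mathfrak K)$ --- as a standalone lemma. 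Granting the construction, every vertical line is $2$-inconsistent and every path consistent, so $\psi$ has TP2 in $\Th(\mathfrak K)=\Th(K)$; hence $K$ is not NTP2, which is the contrapositive we wanted. Via \Cref{locintro} (or by reading the array off directly) this gives the Fact.
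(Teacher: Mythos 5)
Your overall shape is right: to get the Fact one proves the contrapositive, and TP2 for $\psi(x;y,z)$ follows once you produce multipliers $y_i$ such that each $y_i\wp(K)$ has unboundedly many cosets witnessed by elements of $\bigcap_{j\neq i}y_j\wp(K)$ (this is exactly the failure of the finite-index condition of \Cref{CKSeq}); the row/path bookkeeping you describe is the easy, purely combinatorial half, and it matches what the paper does in \Cref{CKSeq} and \Cref{locCKS}. The gap is that the step you defer to a ``standalone lemma'' --- the Chinese-remainder-type independence of the twisted subgroups $y_i\wp(K)$ --- \emph{is} the theorem. Saturation cannot produce it: to realize by saturation a type saying ``$y_n\wp(K)$ is transverse to $y_0\wp(K),\dots,y_{n-1}\wp(K)$'' you must first show the corresponding finite configurations are consistent, which is precisely the algebraic statement to be proved, so the recursion is circular. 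Moreover the hypothesis you propose for the lemma, namely that the $y_i$ be $\Fp$-linearly independent modulo $\wp(K)$, is the wrong invariant: the additive class of $y$ modulo $\wp(K)$ has essentially no bearing on the multiplicatively twisted subgroup $y\wp(K)$, and nothing in your sketch excludes that all these subgroups meet in a common finite-index subgroup even though $[K:\wp(K)]$ is infinite.

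In \cite{CKS} (building on \cite{KSW}) this is exactly where the work lies: one passes to a suitable elementary extension, chooses the multipliers algebraically independent, identifies $\bigcap_{i\leqslant n}a_i\wp(K)$ with the image of the $K$-points of the fiber product of the Artin-Schreier covers $a_i(t^p-t)=x$, proves irreducibility/connectedness of that algebraic group to control the intersection, and then gives a further argument that the relevant indices are \emph{infinite} (not merely that the inclusions are proper), which is what the TP2 array needs. None of this is formal pigeonhole; the paper you are working from deliberately quotes the statement as a Fact, proves only the soft equivalence (\Cref{CKSeq}, \Cref{locCKS}), and explicitly defers this hard algebraic step to \cite{CKS}. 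As it stands, your proposal restates that step as a conjecture rather than proving it.
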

   
   \begin{fct}[\cite{H-KSWn}] Infinite \NIPn fields of characteristic $p>0$ have no Artin-Schreier extensions.\end{fct}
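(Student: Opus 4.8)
The most economical proof deduces this directly from \Cref{locintro}. Suppose $K$ is an infinite field of characteristic $p>0$ admitting an Artin-Schreier extension; equivalently, the Artin-Schreier map $\wp\colon t\mapsto t^p-t$ is not surjective on $K$, so we may fix $a\in K\setminus\wp(K)$. Let $n\ge 1$. By the first assertion of \Cref{locintro}, the formula
\[
\phi(x,y_1,\tpp,y_n):\exists t\;x=y_1\tpp y_n(t^p-t)
\]
has \IPn in $K$; hence $K$ is not \NIPn. Taking the contrapositive gives the Fact: an infinite \NIPn field of characteristic $p>0$ has no Artin-Schreier extension. This is uniform in $n$ --- a single Artin-Schreier witness $a$ makes $K$ fail \NIPn for \emph{every} $n$ --- so nothing further is required.

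Thus all the content sits in the left-to-right direction of \Cref{locintro}, and a proof of the Fact that avoids quoting \cite{H-KSWn} must essentially reprove it; here is how I would proceed. Since $\phi(b;c_1,\dots,c_n)$ holds precisely when $b$ lies in the $\Fp$-subspace $c_1\cdots c_n\cdot\wp(K)$ of $(K,+)$ --- where $\wp(K)$ is a \emph{proper} $\Fp$-subspace, indeed one stable under Frobenius since $\wp(x)^p=\wp(x^p)$, and $K^\times$ acts transitively by multiplication on the family $\{c\,\wp(K):c\in K^\times\}$ --- the plan is to choose sequences $(c^1_i)_{i<\omega},\dots,(c^n_i)_{i<\omega}$ in $K^\times$ (working in a sufficiently saturated elementary extension if convenient) so that, for distinct tuples $\bar\imath=(i_1,\dots,i_n)\in\omega^n$, the subspaces $V_{\bar\imath}:=c^1_{i_1}\cdots c^n_{i_n}\cdot\wp(K)$ are in sufficiently general position, and then, for every finite $w\subseteq\omega^n$, to produce $b_w$ with $b_w\in V_{\bar\imath}\iff\bar\imath\in w$. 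Infiniteness of $K$ furnishes the room for this (e.g.\ through an algebraic-independence / genericity argument), and the $n$ independent coordinates are precisely what lifts the one-dimensional pattern of \cite{KSW} to an \IPn-pattern.

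The step that I expect to carry essentially all the weight is this last construction --- the combinatorial-arithmetic core of \Cref{locintro}. The subtlety is that one cannot get away with a soft dimension count: over the prime field $\Fp$ a vector space can be a finite union of proper subspaces, so excluding accidental coincidences $V_{\bar\imath}=V_{\bar\jmath}$ and securing each required membership pattern has to exploit the specific arithmetic of $\wp(K)$ --- its Frobenius-stability and the multiplicative action of $K^\times$ on its scalings --- rather than generalities about definable subgroups of $(K,+)$. Once \Cref{locintro} is in hand, the passage to the Fact, uniformity in $n$ included, is purely formal.
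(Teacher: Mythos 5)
There is a genuine gap, and it is partly a circularity issue. Your ``economical'' first step deduces the Fact from \Cref{locintro}, but in this paper the dependence runs the other way: the hard direction of \Cref{locintro} (equivalently \Cref{locKSWH}), namely that the BSH$_n$ chain condition for the family $a_1\cdots a_n\wp(K)$ forces $\wp(K)=K$, is precisely the content of \cite{H-KSWn} and is cited, not proved, here. So quoting \Cref{locintro} to prove the Fact is circular relative to the paper's development --- which you half-acknowledge, but your attempt to then ``reprove'' the core does not close the gap. What you offer is a plan (``choose the $c^k_i$ in sufficiently general position'', ``produce $b_w$ by an algebraic-independence/genericity argument''), not an argument: you never exhibit, from the single hypothesis that $K$ is infinite and $a\notin\wp(K)$, any mechanism producing the required parameter arrays, and noting that an $\mathds F_p$-space can be a finite union of proper subspaces only names the obstruction without overcoming it.

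The shape of your plan also diverges from what the actual proofs (and this paper's summary of them) need. One does not realize arbitrary membership patterns $b_w\in V_{\bar\imath}\iff\bar\imath\in w$ directly by genericity; instead one violates the Baldwin--Saxl--Hempel condition: for each $N$ one finds an array $(a^k_j)$ such that the full intersection $\bigcap_{\bar\jmath}H_{\bar\jmath}$ is properly contained in every drop-one intersection, picks $b_{\bar\jmath}$ in the drop-one intersections, and builds the $b_J$ as sums, using the group structure of $(K,+)$. The genuinely hard step --- showing such arrays exist whenever $K$ is infinite with $\wp(K)\neq K$ --- is the algebraic heart of \cite{KSW} for $n=1$ and of \cite{H-KSWn} for general $n$ (analysis of the subgroups $a\wp(K)$ and their intersections via algebraic-group and independence arguments), and nothing in your sketch supplies it. As it stands, your proposal either quotes the Fact's own source through \Cref{locintro} or leaves its entire content unproved.
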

   
   We will study in detail those results, explaining the proof strategy, and reduce them to one formula, see \Cref{locintro}.
   
   \subsection{Complexity of henselian valued fields}
   
   In the spirit of the cornerstone AKE transfer principle, transfer theorems have been established in different settings. They are of the form ``if we know enough about the residue field and the value group, then we also know a lot about the valued field''.
   
   NIP transfer theorems have been established as early as 1980, and little by little in more and more cases. They culminated in 2019, with Anscombe-Jahnke's classification of NIP henselian valued fields, that we repeat here:
   
\begin{thm}[Anscombe-Jahnke, \cite{AJ-NIP}]
Let $(K,v)$ be a henselian valued field. Then $(K,v)$ is NIP iff the following holds:
\begin{enumerate}
\item $k$ is NIP, and
\item either
\begin{enumerate}
\item\label{forbelow3} $(K,v)$ is of equicharacteristic and is either trivial or SAMK, or
\item $(K,v)$ has mixed characteristic $(0,p)$, $(K,v_p)$ is finitely ramified, and $(k_p,\overline{v})$ checks \ref{forbelow3}, or
\item $(K,v)$ has mixed characteristic $(0,p)$ and $(k_0,\overline{v})$ is AMK.
\end{enumerate}
\end{enumerate}
\end{thm}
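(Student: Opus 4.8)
The plan is to prove the two implications separately. The value group of any coarsening of $v$ is an ordered abelian group, hence NIP by Gurevich--Schmitt, so throughout, value groups are harmless: the entire content of the statement is that, for valuations of the restricted shapes recorded in \ref{forbelow3} and clauses (b)--(c), NIP is controlled by the residue field, while conversely NIP forces the valuation into one of those shapes. The equicharacteristic-$0$ case is the baseline (Delon): a henselian valued field of residue characteristic $0$ is NIP iff its residue field is. The two mixed-characteristic clauses isolate the only two further ``tame'' possibilities, governed respectively by finite ramification and by the (separably) algebraically maximal Kaplansky condition, and the equicharacteristic-$p$ part of \ref{forbelow3} records that there NIP is extremely rigid --- by \Cref{locintro}, a single Artin--Schreier extension is already lethal.

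For the backward direction I would assemble an AKE-style transfer principle from known pieces: a henselian valued field with NIP residue field is NIP whenever the valuation is equicharacteristic $0$ (Delon), or equicharacteristic $p$ with the valued field trivially valued or SAMK (the separably-tame/Kaplansky transfer, in the vein of Jahnke--Simon and Kuhlmann's structure theory), or finitely ramified of mixed characteristic, or AMK of mixed characteristic. Granting this, the equicharacteristic case is immediate from \ref{forbelow3} and hypothesis~(1). For clause (b), write $v$ as the composition of the coarsening $v_p$ with the induced valuation $\overline v$ on $k_p$: this $(k_p,\overline v)$ is henselian, equicharacteristic $p$ and trivially valued or SAMK, with residue field $k$ NIP, hence NIP; then $(K,v_p)$ is finitely ramified of mixed characteristic with NIP residue field, hence NIP; finally glue, using the standard fact that for the valued fields occurring here NIP passes from a coarsening $(K,v_p)$ together with its residue valued field $(Kv_p,\overline v)$ up to $(K,v)$ (henselianity descends to coarsenings and to the induced valuation). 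Clause (c) is the same argument with the decomposition through the coarsening $v_0$ with $k_0=Kv_0$ of characteristic $0$ and $(k_0,\overline v)$ mixed-characteristic AMK, finishing off $v_0$ by Delon.

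For the forward direction, assume $(K,v)$ is NIP. That $k$ is NIP is immediate, since $k$ is interpretable in $(K,v)$; this is hypothesis~(1). For (2) I would argue by contrapositive, manufacturing an IP configuration whenever the valuation is not of one of the three listed shapes. Since a reduct of an NIP structure is NIP, $K$ is an NIP field; if $v$ is equicharacteristic $p$ then $K$ is infinite of characteristic $p$, hence has no Artin--Schreier extension by \cite{KSW}, and I would invoke the structural fact that a henselian equicharacteristic-$p$ field without Artin--Schreier extension is trivially valued or SAMK --- failure of $p$-divisibility of $vK$, imperfection of $k$, an Artin--Schreier extension of $k$, or an Artin--Schreier defect each produces, via Hensel lifting, an Artin--Schreier extension of $K$. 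In mixed characteristic I would run through the coarsenings $v_0,v_p$: if (b) fails because $(k_p,\overline v)$ is nontrivially valued and not SAMK, then it has an Artin--Schreier extension and so (by the equicharacteristic-$p$ analysis applied to this residue valued field) carries IP; if both (b) and (c) fail, the surviving possibilities are that $(K,v_p)$ is infinitely ramified or that $(k_0,\overline v)$ is not algebraically maximal Kaplansky, and in each case one extracts, from the convex-subgroup structure together with the residue data, an Artin--Schreier-type configuration. The main obstacle is precisely this last step: transporting the combinatorial complexity witnessed in the residue field of a (in general non-definable) coarsening back to a genuine definable IP pattern in $(K,v)$ itself. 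This is exactly the bookkeeping that \Cref{locintro}, and its mixed-characteristic refinements developed below, are designed to make uniform --- one pulls the explicit Artin--Schreier formula $\phi$ back through the relevant place rather than re-deriving IP by hand case by case.
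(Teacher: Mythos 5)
The first thing to note is that the paper does not prove this statement at all: it is quoted verbatim from Anscombe--Jahnke \cite{AJ-NIP} as background, and the present paper only generalizes the left-to-right direction to \NIPn (\Cref{AJ1way}). So your proposal is in effect an outline of the original Anscombe--Jahnke proof, and it does capture the right architecture: forward, interpretability of $k$ plus Artin--Schreier closure via \cite{KSW} and the decomposition through $v_0$ and $v_p$; backward, an assembly of AKE-style transfer results along the same decomposition.

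As a proof, though, two steps are genuine gaps rather than bookkeeping. Forward, in mixed characteristic with $(K,v_p)$ infinitely ramified, algebraic maximality of $(k_0,\overline v)$ cannot be obtained by ``extracting an Artin--Schreier-type configuration'': a defect extension of the mixed-characteristic residue field $(k_0,\overline v)$ is not an Artin--Schreier extension of anything one can lift, and the explicit formula of \Cref{locintro} only controls $p$-divisibility, perfection and AS-closure. The actual argument (see the paper's \NIPn analogue, \Cref{->0p}, which follows Anscombe--Jahnke here) passes to an $\aleph_1$-saturated extension where $\Delta_0^*/\Delta_p^*$ becomes $\mathds R$, uses maximality of that rank-one piece, defectlessness of compositions, and the first-order character of defectlessness; moreover the non-definability of the coarsenings, which you correctly flag as the main obstacle, is resolved in \cite{AJ-NIP} by Shelah's expansion theorem (external definability of the standard decomposition) --- your sketch names the obstacle but does not discharge it for this step. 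Backward, the ``known pieces'' you invoke are not off-the-shelf: beyond Delon's equicharacteristic-$0$ theorem, the SAMK and mixed-characteristic AMK transfer theorems and, crucially, the gluing of NIP along the compositions $v=\overline v\circ v_p$ and $v=\overline v\circ v_0$ are precisely the technical content of Anscombe--Jahnke's paper (building on Jahnke--Simon), so as written the right-to-left half is assumed rather than proved.
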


This is as good as it can get; since it is an equivalence, establishing NIP transfer theorems in cases outside of this list is not needed.

Now that we know what the optimal NIP transfer theorem is, we aim to push it up the ladder. There are two directions in this theorem; we study left-to-right (what can be deduced from \NIPn/NTP2) in this paper and will study right-to-left (\NIPn/NTP2 transfer) in a follow-up paper. Some key ingredients of the proof have already been pushed up, most notably the Artin-Schreier closure of NIP fields, which we already mentioned.

One other key ingredient is Shelah's expansion theorem, which fails wildly outside of NIP theories. It is used in mixed characteristic together with the following decomposition: 

\begin{dfn}[Standard Decomposition]\label{standec}
 Let $(K,v)$ be a valued field of mixed characteristic. The standard decomposition around $p$ is defined by fixing two convex subgroups:
 
 $$\Delta_0=\bigcap_{\mathclap{\substack{v(p)\in\Delta\\\Delta\subset\Gamma\text{ convex}}}}\Delta\phantom{MM}\&\phantom{MM}\Delta_p=\bigcup_{\mathclap{\substack{v(p)\notin\Delta\\\Delta\subset\Gamma\text{ convex}}}}\Delta$$
 
 And performing the following decomposition, written in terms of residue maps with specified value groups:
 $$K\xrightarrow{\Gamma_v/\Delta_0}k_0\xrightarrow{\Delta_0/\Delta_p}k_p\xrightarrow{\Delta_p}k_v$$
\end{dfn}

We immediately remark that $\Delta_0/\Delta_p$ is of rank 1 and that $\ch(k_0)=0$ and $\ch(k_p)=p$.

This decomposition is externally definable, thus, adding it to the structure preserves NIP by Shelah's expansion theorem. We can then argue part by part to obtain the result.

It is however possible to bypass this argument: instead of trying to prove that each part is NIP, we can use the explicit formula witnessing IP in fields with Artin-Schreier extensions, and lift complexity to the field. This way, there's no need to add intermediate valuations to the language, at least to prove that relevant part are $p$-closed or $p$-divisible.

This strategy can then be adapted to \NIPn and to NTP2 henselian valued fields. We thus generalize one way of Anscombe-Jahnke to \NIPn fields, see \Cref{AJ1wayintro}, and we prove that NTP2 henselian valued fields obey tameness conditions.

Many thanks to Sylvy Anscombe, Artem Chernikov, Philip Dittmann, Nadja Hempel, Franziska Jahnke, Pierre Simon and Pierre Touchard for their helpful comments.

\section{NIP fields}

We summarize the proof of the following result by Itay Kaplan, Thomas Scanlon and Frank Wagner:

\begin{thm}[\cite{KSW}]\label{NIPAS}
 Infinite NIP fields of characteristic $p$ are Artin-Schreier closed.
\end{thm}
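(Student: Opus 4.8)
The plan is to read this off from \Cref{locintro}. Suppose towards a contradiction that $K$ is an infinite NIP field of characteristic $p$ which is \emph{not} Artin-Schreier closed, that is, which admits an Artin-Schreier extension. Then the $n=1$ instance of \Cref{locintro} tells us that $\phi(x,y):\exists t\; x=y(t^p-t)$ has the independence property, contradicting NIP of $K$. Hence $\wp(K)=K$, which is exactly Artin-Schreier closedness. At the level of this section the statement is therefore immediate, the substance having been moved into \Cref{locintro}.

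For completeness, and because the point of the section is to recall the method of Kaplan-Scanlon-Wagner that \Cref{locintro} makes explicit, I would also outline the original argument. Write $W=\wp(K)$, an $\emptyset$-definable subgroup of $(K,+)$ with kernel $\Fp$; since $K$ is infinite this gives $\abs W=\abs K$, and we assume $W\neq K$. The first observation is purely algebraic: for each $a\in K^\times$ the multiplicative translate $aW$ is again an additive subgroup, these are uniformly definable in $a$, and $\bigcap_{a\in K^\times}aW=\{0\}$ --- indeed, if a nonzero $x$ lay in every $aW$ then $a^{-1}x\in W$ for all $a\in K^\times$, hence $K^\times\subseteq W$ and $W=K$, contrary to hypothesis.

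NIP then enters through the Baldwin-Saxl chain condition applied to this uniformly definable family: there is a bound $N$, depending only on the theory and the formula defining the family, such that every intersection of translates $aW$ equals the intersection of at most $N$ of them. With the previous paragraph this yields $a_1,\dots,a_m\in K^\times$, with $m$ as small as possible, satisfying $\bigcap_{i=1}^m a_iW=\{0\}$. Each $a_iW$ is infinite, so $m\geq2$, and minimality of $m$ forces $U:=\bigcap_{i=1}^{m-1}a_iW$ to be a nonzero subgroup with $U\cap a_mW=\{0\}$, whence $U$ embeds into $K/a_mW\cong K/W$. The final step, which I expect to be the genuine obstacle and which is precisely what \Cref{locintro} crystallizes, is to turn this bounded-index information into a contradiction by playing the translates against one another multiplicatively: using the infinitude of $K$ one selects parameters $b_1,\dots,b_k$ and witnesses $c_S$ for $S\subseteq\{1,\dots,k\}$ with $c_S\in b_iW$ iff $i\in S$, i.e.\ realizing an IP pattern for $\phi$, contradicting NIP exactly as in the first paragraph. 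I would therefore stop at the sketch here and leave the explicit choice of parameters to the proof of \Cref{locintro}.
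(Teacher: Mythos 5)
Your first paragraph is circular in the context of this paper. \Cref{locintro} is not an independent resource: its $n=1$ case is \Cref{locKSW}, and the only non-trivial direction of \Cref{locKSW} is proved here by invoking exactly the argument summarized after \Cref{NIPAS} (``the family $a\wp(K)$ checks Baldwin--Saxl only if $\wp(K)=K$'', deferred to the original Kaplan--Scanlon--Wagner paper). So deducing \Cref{NIPAS} from \Cref{locintro} just moves the weight back onto the very content you are supposed to be proving; the paper's own (summary) proof goes the other way: Baldwin--Saxl for NIP theories, applied to the uniformly definable family of additive subgroups $a\wp(K)$, plus the cited algebraic fact that this chain condition forces $\wp(K)=K$.

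The completeness sketch also has a genuine gap where you try to reconstruct that algebraic fact. The claim $\bigcap_{a\in K^\times}a\wp(K)=\{0\}$ when $\wp(K)\neq K$ is fine, but the next step --- ``with Baldwin--Saxl this yields finitely many $a_1,\dots,a_m$ with $\bigcap_{i\leqslant m}a_i\wp(K)=\{0\}$'' --- does not follow: Baldwin--Saxl only says that each \emph{finite} intersection equals an intersection of at most $N$ members; it is perfectly compatible with an infinite strictly decreasing chain of such finite intersections whose total intersection is trivial while every finite one is nontrivial. Moreover this is not how the KSW proof runs: the real content, which your ``bounded-index information'' and ``playing the translates multiplicatively'' do not engage with, is to show that if $\wp(K)\neq K$ then for every $N$ one can choose (after reducing to the perfect case, in a suitable extension) parameters $a_0,\dots,a_N$ for which the family $a_i\wp(K)$ \emph{fails} the Baldwin--Saxl condition; this uses the geometry of the connected algebraic group cut out by the equations $y_i^p-y_i=x/a_i$. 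Note also that $K/\wp(K)$ need not be finite (that is the AS-finite, NTP2 setting), so there is no ``bounded index'' to exploit. The paper defers this core step to \cite{KSW} explicitly; your sketch replaces it with steps that either do not follow or are not the argument, so as written the proof is incomplete at precisely the point that carries all the difficulty.
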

\begin{proof}[Proof summary]
In a NIP theory, definable families of subgroups check a certain chain condition, namely, Baldwin-Saxl's. In an infinite field of characteristic $p>0$, the family $\set{a\wp(K)}{a\in K}$, where $\wp(X)$ is the Artin-Schreier polynomial $X^p-X$, is a definable family of additive subgroups; thus it checks Baldwin-Saxl, and this is only possible if $\wp(K)=K$. The complexity of this argument is mainly hidden in the very last affirmation, we refer to the original paper for details.
\end{proof}
\subsection{Baldwin-Saxl's condition}
We fix a complete theory $T$ and a monster $\mathds M\satisf T$.
\begin{dfn}\label{defNIP}
  A formula $\phi(x,y)$ is said to have the independence property (IP) if there are $(a_i)_{i<\omega},(b_J)_{J\subset\omega}$ such that $\mathds M\satisf\phi(b_J,a_i)$ iff $i\in J$.
 
 A formula is said to be NIP if it doesn't have IP, and a theory is called NIP if all formulas are NIP.
\end{dfn}

Let $(G,\cdot)$ be a group contained, as a set, in $\mathds M$. We do not assume it is definable.

Let $\phi(x,y)$ be an $\rond{L}$-formula such that for any $a\in\mathds M$, $H_a=\phi(M,a)$ is a subgroup of $G$. 

\begin{prop}[Baldwin-Saxl]
  $\phi$ is NIP iff the family $(H_a)_{a\in \mathds M}$ checks the BS-condition: there is $N<\omega$ (depending only on $\phi$) such that for any finite $B\subset \mathds M$, there is a $B_0\subset B$ of size $\leqslant N$ such that:
  $$\bigcap_{a\in B} H_a=\bigcap_{a\in B_0} H_a$$
  That is, the intersection of finitely many $H$'s is the intersection of at most $N$ of them.
\end{prop}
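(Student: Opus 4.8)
The plan is to prove both implications through the standard combinatorial reformulation of IP: $\phi(x,y)$ has IP if and only if for every $n<\omega$ there are $a_1,\dots,a_n\in\mathds M$ that are \emph{shattered} by the family $(H_a)_a$, i.e.\ such that for every $J\subset\{1,\dots,n\}$ there is $b_J$ with $b_J\in H_{a_i}$ exactly when $i\in J$. This is the usual equivalence between IP and infinite VC-dimension, and the passage from the finite configurations to the $\omega$-indexed one of \Cref{defNIP} is a routine compactness argument; granting it, the proposition becomes a purely combinatorial statement about how shattered tuples interact with finite intersections of the $H_a$.

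For BS $\Rightarrow$ NIP I would argue by contraposition. Assume $\phi$ has IP, fix $n$, and let $a_1,\dots,a_n$ be shattered. Put $B=\{a_1,\dots,a_n\}$ and let $B_0\subsetneq B$, say with $a_k\notin B_0$; taking the witness $b_J$ for $J=\{1,\dots,n\}\setminus\{k\}$, we get $b_J\in H_{a_i}$ for all $i\neq k$ but $b_J\notin H_{a_k}$, so $\bigcap_{a\in B_0}H_a\supsetneq\bigcap_{a\in B}H_a$. Hence no proper subset of $B$ realizes $\bigcap_{a\in B}H_a$, and since $\abs B=n$ was arbitrary no $N$ can witness the BS-condition.

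For NIP $\Rightarrow$ BS, again by contraposition: assume the BS-condition fails, fix $n$, and apply the failure with $N=n$ to obtain a finite $B\subset\mathds M$ no subset of size $\leq n$ of which realizes $\bigcap_{a\in B}H_a$. Shrink $B$ to a $\subseteq$-minimal subset $B_0=\{a_1,\dots,a_m\}$ with $\bigcap_{i\leq m}H_{a_i}=\bigcap_{a\in B}H_a$; then $m\geq n+1$, and minimality gives, for each $i\leq m$, an element $g_i\in\bigl(\bigcap_{j\neq i}H_{a_j}\bigr)\setminus H_{a_i}$. Now I would invoke the group structure of $G$: for $J\subset\{1,\dots,m\}$ let $g_J$ be the ordered product $\prod_{i\notin J}g_i$, factors taken in increasing order of index. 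A direct check then yields $g_J\in H_{a_k}$ iff $k\in J$: if $k\in J$, every factor is some $g_i$ with $i\neq k$ and hence lies in the subgroup $H_{a_k}$, so $g_J\in H_{a_k}$; if $k\notin J$, write $g_J=A\,g_k\,B$ with $A$ and $B$ products of $g_i$'s with $i\neq k$, so $A,B\in H_{a_k}$ and $g_J\in H_{a_k}\iff g_k\in H_{a_k}$, which is false. Thus $\{a_1,\dots,a_m\}$, a fortiori some subset of size $n+1$, is shattered; as $n$ was arbitrary, $\phi$ has IP.

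The compactness reduction to finite shattered tuples and the minimality argument that produces a genuinely $(n{+}1)$-element irredundant subfamily are where the bookkeeping sits, but I expect the one genuinely delicate point to be the product construction in the second direction. Since $G$ is not assumed abelian, one must define $g_J$ as an \emph{ordered} product and check that the factors on both sides of the distinguished factor $g_k$ really can be absorbed into $H_{a_k}$; this is exactly what makes the (non-abelian) Baldwin-Saxl argument go through, and it is the only place where it matters that the $H_a$ are subgroups rather than arbitrary definable sets.
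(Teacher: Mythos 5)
Your proof is correct and follows essentially the same route as the paper: one direction extracts an irredundant (minimal) subfamily and uses ordered products of the witnesses $g_i$, absorbed into each subgroup by the sandwich argument, to build a shattered set, while the other uses a shattering witness for the full index set to contradict the possibility of dropping one subgroup from the intersection. The only differences are cosmetic — contrapositive packaging, the explicit compactness/VC reformulation, and spelling out the minimality step that the paper leaves implicit when it picks its $a_0,\dots,a_N$.
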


This is a classical result first studied in \cite{BS}. Modern versions can be found in many model theory textbooks, for example \cite{PS-NIP}; however, it is usually not stated as an equivalence, since ``in a NIP theory, all definable families of groups check a specific chain condition'' is much more useful than ``if a specific family checks this hard-to-check chain condition, a specific formula is NIP, but some others might have IP''. We give a proof here for convenience.

\begin{proof}\ 
\paragraph{$\Rightarrow$:}
Assume $\phi$ is NIP, and suppose that the family $(H_a)_{a\in\mathds M}$ fails to check the BS-condition for a certain $N$, that is, we can find $a_0,\!\cdots\!,a_N\in\mathds M$ such that:
  $$\bigcap_{0\leqslant i\leqslant N} H_i\subsetneq\bigcap_{0\leqslant i\leqslant N\,\&\,i\neq j} H_i$$
  for all $j\leqslant N$, and where we write $H_i$ for $H_{a_i}$. We take $b_j\notin H_j$ but in every other $H_i$ and we define $b_I=\prod_{j\in I} b_j$, where the product denote the group law of $G$ -- the order of operations doesn't matter. We have $\mathds M\satisf \phi(b_I,a_i)$ iff $i\notin I$. Because $\phi$ is NIP, there is a maximal such $N$, and thus the BS-condition is checked for some $N$ big enough.
  
  \paragraph{$\Leftarrow$:} Suppose that $(H_a)_{a\in\mathds M}$ checks the BS-condition for a given $N$, and suppose that we can find $a_0,\!\cdots\!,a_N\in A$ and $(b_I)_{I\subset \set{0,\!\cdots\!,N}}\in G$ such that $\mathds M\satisf\phi(b_I,a_i)$ iff $i\in I$. Now by BS, $\bigcap_{0\leqslant i\leqslant N} H_i=\bigcap_{0\leqslant i<N} H_i$ (maybe reindexing it). But now, let $b=b_{\set{0,\!\cdots\!,N-1}}$; we know that $\mathds M\satisf\phi(b,a_i)$ for $i<N$, which means that $b\in \bigcap_{0\leqslant i<N} H_i$, thus $b\in H_N$, and thus $\mathds M\satisf\phi(b,a_N)$, which contradicts the choice of $a$ and $b$.
\end{proof}

\subsection{Artin-Schreier closure and local NIPity}
We can now state the original result by Kaplan-Scanlon-Wagner as an equivalence:
\begin{cor}[Local KSW]\label{locKSW}
 In an infinite field $K$ of characteristic $p>0$, the formula $\phi(x,y)\colon\exists t\;x=y(t^p-t)$ is NIP iff $K$ has no AS-extension.
\end{cor}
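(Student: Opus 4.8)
The plan is to read the corollary off from the Baldwin--Saxl proposition together with the argument already sketched for \Cref{NIPAS}. First I would unwind the formula: for $a\in K$ the set $\phi(K,a)=\set{a(t^p-t)}{t\in K}$ equals $a\wp(K)$, where $\wp(X)=X^p-X$; since $\wp$ is additive in characteristic $p$, $\wp(K)$ is an $\Fp$-subspace of $(K,+)$ and each $H_a:=a\wp(K)$ is an additive subgroup of $G:=(K,+)$. Moreover $\wp(K)\neq 0$ (else $t^p=t$ for every $t$, forcing $K=\Fp$), and by Artin--Schreier theory $K$ has an Artin--Schreier extension if and only if $\wp(K)\neq K$. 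Applying Baldwin--Saxl to the family $(H_a)_{a\in K}$ inside $G$, the formula $\phi$ is NIP iff $(H_a)_{a\in K}$ satisfies the BS chain condition, so it remains to show that the latter holds iff $\wp(K)=K$.

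If $\wp(K)=K$ (i.e.\ $K$ has no Artin--Schreier extension), then $H_a=aK=K$ for $a\neq 0$ while $H_0=\set{0}$, so the family takes only two distinct values and satisfies the BS-condition trivially (with $N=1$); hence $\phi$ is NIP. One may also simply observe that $\phi(x,y)$ is then equivalent to $y\neq 0\vee x=0$, which visibly has no IP.

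For the converse, assume $\wp(K)\subsetneq K$. I must show $(H_a)_a$ violates every finite instance of the BS chain condition: for each $N$ there should be $a_0,\ldots,a_N\in K$ with $\bigcap_{i\leqslant N}H_{a_i}\subsetneq\bigcap_{i\leqslant N,\,i\neq j}H_{a_i}$ for all $j\leqslant N$. Given such a configuration, the $\Rightarrow$-half of the proof of Baldwin--Saxl produces $(b_I)_{I\subseteq\set{0,\ldots,N}}$ in $K$ with $K\satisf\phi(b_I,a_i)\iff i\notin I$, i.e.\ an IP-pattern of length $N$; unboundedly long finite IP-patterns give IP of $\phi$ by compactness. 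This construction of strictly decreasing intersections is exactly the ``last affirmation'' of the proof summary of \Cref{NIPAS} and is the genuine content of \cite{KSW}; I would cite it rather than reprove it. Its mechanism, in brief: fix a nonzero $\Fp$-linear functional $f$ on $K$ vanishing on $\wp(K)$ (possible since $\wp(K)\subsetneq K$); the dilates $a\wp(K)$ are then governed by the $\Fp$-subspaces $a\cdot\wp(K)^{\perp}$ of $\Hom_{\Fp}(K,\Fp)$ under the action $(a\cdot f)(x)=f(a^{-1}x)$, and because $K$ is infinite these orbits are large enough (in particular $\bigcap_{a\in K^\times}a\cdot\wp(K)^{\perp}=0$) to put the $H_{a_i}$ in sufficiently general position; \cite{KSW} does the delicate part of making this work over an arbitrary infinite field.

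The step I expect to be the main obstacle is precisely this combinatorial core: that $\wp(K)\neq K$ forces the dilation family $(a\wp(K))_{a\in K}$ to fail every finite instance of the BS chain condition, over an \emph{arbitrary} infinite field of characteristic $p$. Everything else --- unwinding $\phi$, the trivial case $\wp(K)=K$, and the passage from BS-failure to IP --- is bookkeeping around the Baldwin--Saxl proposition. I would also note explicitly that the proof of \Cref{NIPAS} uses its hypothesis only through Baldwin--Saxl applied to this single family, so that ``$\phi$ is NIP'' (rather than ``$K$ is NIP'') already suffices to deduce $\wp(K)=K$; this is exactly what upgrades \Cref{NIPAS} into the claimed equivalence.
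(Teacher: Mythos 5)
Your proposal is correct and follows essentially the same route as the paper: apply the Baldwin--Saxl equivalence to the family $H_a=a\wp(K)$ in $(K,+)$, handle the AS-closed case trivially (the family degenerates to $\{K,\{0\}\}$), and cite \cite{KSW} for the hard implication that failure of Artin--Schreier closure defeats the BS chain condition. Your parenthetical ``mechanism in brief'' via functionals annihilating $\wp(K)$ is not really how the KSW argument goes and would not suffice on its own, but since you explicitly defer that step to the citation---exactly as the paper does---this does not affect the correctness of the proof.
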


\begin{proof}
    Apply previous result with $(G,\cdot)=(K,+)$ and $\phi$ as given: $\phi$ is NIP iff the family $H_a=a\wp(K)$ checks the BS-condition. This then implies that $K$ is AS-closed as discussed in the paragraph following \Cref{NIPAS}. The opposite direction is quite trivial: if $K$ is AS-closed, then $\wp(K)=K$, so the BS-condition is obviously checked.
\end{proof}

\subsection{Lifting}
The formula we obtained says ``this separable polynomial of degree $p$ has a root'', so if it witnesses IP in the residue field of a $p$-henselian valued field, we can lift this pattern to the field itself.

\begin{lem}
 Let $(K,v)$ be $p$-henselian and suppose $k_v$ is infinite, of characteristic $p$, and not AS-closed; then $K$ has IP as a pure field witnessed by $\phi(x,y)\colon\exists t\,x=(t^p-t)y$.
\end{lem}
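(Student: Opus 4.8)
The plan is to lift the witnessing configuration from the residue field, using $p$-henselianity to transport solutions of Artin--Schreier equations. Write $\wp(X)=X^p-X$, and let $\mathcal{O}_v\supseteq\mathfrak{m}_v$ denote the valuation ring of $v$ and its maximal ideal. By \Cref{locKSW}, since $k_v$ is infinite of characteristic $p$ and not Artin--Schreier closed, the formula $\phi(x,y)\colon\exists t\;x=y(t^p-t)$ has IP in $k_v$; as IP is equivalent to having shattering configurations on arbitrarily large finite index sets, it suffices to produce, for each $n<\omega$, elements $a_0,\dots,a_{n-1}$ and $(b_S)_{S\subseteq n}$ of $K$ with $K\satisf\phi(b_S,a_i)\iff i\in S$, and conclude by compactness. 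Fix such a configuration $(\bar a_i)_{i<n}$, $(\bar b_S)_{S\subseteq n}$ in $k_v$. One first checks that each $\bar a_i\neq 0$: if $\bar a_i=0$ then $\phi(\,\cdot\,,\bar a_i)$ defines $\{0\}$ in $k_v$, forcing $\bar b_{\{i\}}=0$; but $\phi(0,\bar a_j)$ holds (with witness $0$) for every $j$, contradicting $j\notin\{i\}$ for some $j\neq i$ (taking $n\geq 2$, as we may).

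The core step is the following transfer property, for $a\in\mathcal{O}_v^\times$ and $b\in\mathcal{O}_v$:
$$K\satisf\phi(b,a)\ \Longleftrightarrow\ k_v\satisf\phi(\bar b,\bar a).$$
For $(\Leftarrow)$: if $\bar b=\bar a\,\wp(\bar t_0)$ with $\bar t_0\in k_v$, choose any lift $t_0\in\mathcal{O}_v$; then $c:=b/a-\wp(t_0)$ lies in $\mathcal{O}_v$ and has residue $\bar b\bar a^{-1}-\wp(\bar t_0)=0$, so $c\in\mathfrak{m}_v$. The polynomial $X^p-X-c$ is an Artin--Schreier polynomial, hence splits in the maximal Galois $p$-extension $K(p)$, and its reduction $X^p-X$ has $0$ as a simple root; so $p$-henselianity provides $s\in K$ with $\wp(s)=c$, and then $\wp(t_0+s)=\wp(t_0)+\wp(s)=b/a$ by additivity of $\wp$ in characteristic $p$, i.e.\ $K\satisf\phi(b,a)$. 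For $(\Rightarrow)$: if $b=a(t^p-t)$ with $t\in K$, then $v(t)\geq 0$ (otherwise $v(t^p-t)=p\,v(t)<0\leq v(b/a)$, impossible), so $t\in\mathcal{O}_v$ and reducing gives $k_v\satisf\phi(\bar b,\bar a)$.

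To finish, pick lifts $a_i\in\mathcal{O}_v^\times$ of the $\bar a_i$ and arbitrary lifts $b_S\in\mathcal{O}_v$ of the $\bar b_S$. Then for all $S\subseteq n$ and $i<n$ the transfer property gives $K\satisf\phi(b_S,a_i)\iff k_v\satisf\phi(\bar b_S,\bar a_i)\iff i\in S$, which is precisely the configuration we wanted; hence $\phi(x,y)\colon\exists t\;x=(t^p-t)y$ has IP in $K$.

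The main obstacle is the $(\Leftarrow)$ direction of the transfer property: one must be sure that $p$-henselianity genuinely applies, namely that the relevant degree-$p$ polynomial splits over $K(p)$ and has a simple residual root. In characteristic $p$ this is clean, since $X^p-X-c$ is literally Artin--Schreier. In mixed characteristic, where $\wp$ is no longer additive, one should instead work with $(t_0+X)^p-(t_0+X)-b/a$ — which has a simple root modulo $\mathfrak{m}_v$ and reduces to a product of distinct linear factors because $\bar b/\bar a\in\wp(k_v)$ — and reduce to the equicharacteristic-$p$ situation via the standard decomposition (\Cref{standec}). Everything else is routine bookkeeping.
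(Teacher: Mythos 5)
Your main argument is, in substance, the paper's: lift the parameters and use $p$-henselianity to transfer roots of $a(t^p-t)-b$ between $k_v$ and $K$, the direction from $K$ down to $k_v$ being the easy valuation-theoretic one. The reduction to finite patterns by compactness, the observation that $\bar a_i\neq 0$, and the normalization $c=b/a-\wp(t_0)\in\mathfrak m_v$ are all fine, and in equicharacteristic $p$ your proof of the transfer property is complete and correct.

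The genuine gap is the mixed characteristic case, and it is not a peripheral case: the lemma only assumes $\ch(k_v)=p$, and it is applied precisely when $\ch(K)=0$ (see the example with $\Qp(\sqrt[p]{p},\dots)$ immediately after the lemma, and the later mixed-characteristic arguments). Your core step uses additivity of $\wp$ on $K$ and the fact that $X^p-X-c$ is an Artin--Schreier polynomial, hence splits in $K(p)$; both fail when $\ch(K)=0$, as you note, but the patch you sketch does not repair this. First, ``reduce to the equicharacteristic-$p$ situation via the standard decomposition'' cannot work as stated: in the decomposition $K\to k_0\to k_p\to k_v$ of \Cref{standec}, the characteristic jump persists (it sits in the rank-one piece $k_0\to k_p$, and in any case the root must be produced in $K$ itself, which has characteristic $0$), so after lifting from $k_v$ to $k_p$ by your Artin--Schreier argument you face exactly the original problem again, one coarsening up. Second, the justification you give for invoking $p$-henselianity --- that the polynomial has a simple residual root, indeed a reduction splitting into distinct linear factors --- is exactly the point requiring an argument in characteristic $0$: the $p$-Hensel lemma lifts simple residual roots of polynomials whose roots lie in $K(p)$, and in characteristic $0$ the splitting field of $X^p-X-c$ need not be a $p$-extension, so this hypothesis is no longer automatic as it is in characteristic $p$. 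The paper's proof takes no such detour: it applies $p$-henselianity directly to the lifted polynomial $\alpha_i(T^p-T)-\beta_J$, whose reduction has only simple roots, with no case distinction on $\ch(K)$. To complete your proposal you must supply the missing step: an argument that $p$-henselianity of $(K,v)$ suffices to lift the relevant root of this degree-$p$ polynomial when $K$ has characteristic $0$; as written, the case the lemma exists for is not proven.
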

\begin{proof}
  By assumption and by \Cref{locKSW}, there are $(a_i)_{i<\omega}$ and $(b_J)_{J\subset\omega}$ such that $k_v\satisf\phi(b_J,a_i)$ iff $i\in J$, that is, $P_{i,J}(T)=a_i(T^p-T)-b_J$ has a root in $k_v$ iff $i\in J$. But by $p$-henselianity, taking any lift $\alpha_i$, $\beta_J$ of $a_i$ and $b_J$, $P_{i,J}(T)=\alpha_i(T^p-T)-\beta_J$ has a root in $K$ iff $i\in J$, thus $K\satisf\phi(\beta_J,\alpha_i)$ iff $i\in J$.
\end{proof}

This lemma gives us an explicit formula witnessing IP in some fields; most interestingly, in valued fields of mixed characteristic. For example, consider $K=\Qp(\sqrt[p]{p},\sqrt[p]{\sqrt[p]{p}},\cdots)$: this valued field has residue $\Fp$ and value group $\mathds Z[\tfrac 1 {p^{\infty}}]$; going to a sufficiently saturated extension, we can find a non-trivial proper coarsening $w$ of the $p$-adic valuation $v_p$ with residue characteristic $p$, thus $(k_w,\overline{v_p})$ is a non-trivial valued field of equicharacteristic $p$ with residue $\Fp$, thus it is not AS-closed, and we apply the previous Lemma to $(K,w)$: $K$ has IP as a pure field.

Let us note that bypassing valuations to witness IP in the pure field is not something surprising, as such a result can be obtained in any henselian field, to the cost of explicitness:

\begin{lem}[Jahnke, \cite{Jahnke}]\label{Fran}
 Let $K$ be NIP and $v$ be henselian, then $(K,v)$ is NIP.
\end{lem}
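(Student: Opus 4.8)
The plan is to reduce, through the theory of the canonical henselian valuation, to situations where an Ax--Kochen--Ershov style transfer applies, treating the separably closed case by hand. So suppose first that $K$ is separably closed. If $v$ is trivial, then $(K,v)$ is interdefinable with the pure field $K$ and we are done; otherwise $\Th(K,v)$ is a completion of the theory of separably closed non-trivially valued fields of the relevant characteristic and imperfection degree, and such theories are NIP --- by the quantifier elimination for separably closed valued fields, or by realizing $(K,v)$ inside a model of $\ACVF$ equipped with a $p$-basis --- so $(K,v)$ is NIP.

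Now assume $K$ is not separably closed. Then $K$ carries a canonical henselian valuation $v_K$, every henselian valuation on $K$ is comparable with $v_K$, and, crucially, $v_K$ is $\emptyset$-definable in the pure field $K$ by the work of Jahnke and Koenigsmann on definability of henselian valuations; in particular $(K,v_K)$ is interdefinable with $K$, hence NIP. One then runs an Ax--Kochen--Ershov analysis along the tower relating $v$ to $v_K$ --- invoking definability of suitable coarsenings of henselian valuations to remain in the definable category --- observing that the residue fields that occur are either interpretable in $K$, hence NIP, or separably closed, hence NIP for free, while the value groups that occur are NIP because every ordered abelian group is (Gurevich--Schmitt), a property that survives naming a convex subgroup. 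In equicharacteristic $0$ these inputs already yield the conclusion (Delon).

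The real content, and the expected main obstacle, is the transfer in positive and mixed characteristic. This is exactly where the phenomenon studied in the present paper enters: an NIP field is Artin--Schreier closed (\Cref{NIPAS}), and so is any of its residue fields that is interpretable via a definable henselian valuation, which forces the tameness conditions --- Artin--Schreier closedness of the appropriate residue fields, finite ramification, and the like --- needed to push through the classification in the spirit of Anscombe--Jahnke. Throughout, the delicate point is the bookkeeping of definability: one must ensure that the residue fields and value groups whose NIPity is being used are genuinely interpretable in $K$, if necessary after replacing $K$ by its Shelah expansion $K^{\mathrm{Sh}}$ --- still NIP by Shelah's expansion theorem --- so that NIP really descends to $(K,v)$.
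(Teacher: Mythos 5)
Your proposal does not follow the route behind the lemma and two of its load-bearing steps are flawed. The paper does not reprove this result: it quotes Jahnke, whose argument shows that (away from the separably/real closed cases, which are handled via NIP of ACVF/SCVF and composition) the henselian valuation $v$ is \emph{externally} definable, so that $(K,v)$ is a reduct of the Shelah expansion $K^{\mathrm{Sh}}$ and NIP follows at once from Shelah's expansion theorem --- no Ax--Kochen--Ershov transfer is needed. Your central claim that for $K$ NIP and not separably closed the canonical henselian valuation $v_K$ is $\emptyset$-definable in the pure field is false: the Jahnke--Koenigsmann definability theorems need hypotheses on the residue field of $v_K$ (not separably closed, not real closed, plus extra care at orderings/$p=2$), and a non-archimedean real closed field such as $\mathbb{R}((t^{\mathbb{Q}}))$ is NIP, not separably closed, carries a nontrivial henselian valuation, yet by o-minimality admits no definable proper convex valuation ring. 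External definability is exactly the fix for this, but in your write-up it appears only as optional bookkeeping at the end, while the burden is carried by a transfer argument.

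That transfer step is the second gap. In residue characteristic $p$ (and in mixed characteristic), NIP of the residue field and of the value group does not yield NIP of the valued field without strong tameness hypotheses --- separable algebraic maximality and the Kaplansky conditions, finite ramification, or AMK of $(k_0,\overline v)$; this is precisely the content of the Anscombe--Jahnke trichotomy. You assert these conditions are ``forced'' by Artin--Schreier closedness of $K$ (\Cref{NIPAS}), but you do not derive them: for instance, perfection of the relevant residue fields and algebraic maximality do not follow formally from $K$ being Artin--Schreier closed, and the residue fields you want to use are only interpretable once a suitable (external) definability statement is in place. Invoking ``the classification in the spirit of Anscombe--Jahnke'' at this point is assuming a later and substantially harder theorem whose hypotheses you have not verified, rather than proving the lemma. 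So, as written, the proposal has a genuine gap both in the definability claim and in the positive/mixed characteristic transfer; the correct and much shorter mechanism is the one the paper indicates after \Cref{NIPlift}: prove external definability of $v$ and apply Shelah's expansion theorem.
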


\begin{cor}\label{NIPlift}
 Let $(K,v)$ be henselian, if $(K,v)$ has IP, then $K$ has IP as a pure field. In particular, if $k$ has IP, $K$ has IP.
\end{cor}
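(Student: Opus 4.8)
The plan is to read off the first assertion as the contrapositive of Jahnke's Lemma~\ref{Fran}. So I would argue by contradiction: suppose $K$ is NIP as a pure field. Since $v$ is henselian, \Cref{Fran} applies and gives that $(K,v)$ is NIP, contradicting the hypothesis that $(K,v)$ has IP. Hence $K$ has IP as a pure field. As flagged in the discussion preceding the statement, this is the ``cheap'' version: in contrast with the explicit lifting lemma above, it produces no distinguished witnessing formula, only the existence of one — which is exactly the announced trade-off.

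For the ``in particular'' clause I would use that the residue field is interpretable in the valued field. Indeed the valuation ring $\mathcal O_v$ and its maximal ideal $\mathfrak m_v$ are $\emptyset$-definable in the language of valued fields, so $k=\mathcal O_v/\mathfrak m_v$ is an interpretable quotient ring. If $\chi(x,y)$ is an $\Lring$-formula and $(a_i)_{i<\omega}$, $(b_J)_{J\subset\omega}$ witness IP for $\chi$ in $k$, then replacing each element of $k$ by a representative in $\mathcal O_v$ and each ring symbol by its interpretation turns this into an IP pattern for the corresponding formula in the language of valued fields. Thus $k$ having IP forces $(K,v)$ to have IP, and the first part then concludes that $K$ has IP as a pure field.

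I expect no genuine obstacle here: this is a soft corollary. The only point deserving a line of care is that the transfer of IP through an interpretation is invoked in the \emph{easy} direction — from the interpreted structure $k$ up to the ambient $(K,v)$ — and not the other way around. One could alternatively, in the special case where $\ch(k)=p$ and $k$ is not Artin-Schreier closed, feed the data through the explicit lifting lemma above and obtain a concrete witnessing formula; but in the stated generality (arbitrary residue characteristic, arbitrary henselian $v$) Lemma~\ref{Fran} is precisely what is needed, and nothing more.
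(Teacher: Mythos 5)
Your proposal is correct and matches the paper's intended argument exactly: the corollary is stated as an immediate consequence of Lemma~\ref{Fran} (contrapositive), with the ``in particular'' clause following from the residue field being interpretable in the valued field. Nothing further is needed.
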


At heart of Jahnke's result is Shelah's expansion theorem, since her strategy was to prove that, in most cases, $v$ is externally definable. We refer to \cite{Jahnke} for details.

So, in fact, the main interest of explicit Artin-Schreier lifting is that it skips Shelah's expansion theorem, which only works for NIP theories; moreover it also allows us to slightly relax the henselianity assumption into $p$-henselianity, but only in the specific case where the IPity comes from Artin-Schreier extensions of some residue field.

\section{\texorpdfstring{\NIPn}{NIPn} fields}

\NIPn theories are the most natural generalization of NIP. They were first defined and studied by Shelah in \cite{Shelah-strdep}. Their behavior is erratic, sometimes very similar to NIP theories, sometimes wildly different.

\begin{dfn}
 Let $T$ be a complete theory and $\mathds M\satisf T$ a monster model. A formula $\phi(x;y_1,\dots,y_n)$ is said to have the independence property of order $n$ (\IPn) if there are $(a^k_i)^{1\leqslant k\leqslant n}_{i<\omega}$ and $(b_J)_{J\subset\omega^n}$ such that $\mathds M\satisf\phi(b_J,a^1_{i_1},\dots,a^n_{i_n})$ iff $(i_1,\dots,i_n)\in J$.
 A formula is said to be \NIPn if it doesn't have \IPn, and a theory is called \NIPn if all formulas are \NIPn. We also write ``strictly \NIPn'' for ``\NIPn and \IP{n-1}''.
\end{dfn}

For any $n\geqslant 2$, strictly \NIPn structures exist; for some of algebraic flavor, let us mention pure groups obtained via the Mekler construction, see \cite{CH-Mekler}, or $n$-linear forms, see \cite{ndep2}. However, strictly \NIPn pure fields are believed not to exist:

\begin{con}\label{NIPncon}
 For $n\geqslant2$, strictly \NIPn pure fields do not exist; that is, a pure field is \NIPn iff it is NIP.
\end{con}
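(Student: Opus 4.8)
\textbf{Toward \Cref{NIPncon}.} Only the nontrivial direction needs an argument, since NIP clearly implies \NIPn. The plan would be to run, in the \NIPn world, the structural programme that is conjectured (and known in many cases) for NIP fields: show that an infinite \NIPn field is separably closed, real closed, or carries a nontrivial henselian valuation, and then bootstrap along that valuation. In positive characteristic the first ingredient is already available: by \cite{H-KSWn} an infinite \NIPn field of characteristic $p$ is Artin-Schreier closed, the \NIPn counterpart of \Cref{NIPAS}, and via \Cref{locintro} this reflects the fact that the only relevant obstruction, ``$K$ has an Artin-Schreier extension'', is detected by a formula that is already IP, hence cannot produce a genuinely new regime at level $n$. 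Lifting this through $p$-henselian valuations exactly as in the proof of \Cref{AJ1wayintro} controls the $p$-extensions and $p$-divisibility of the residue fields appearing in the standard decomposition (\Cref{standec}), so the $p$-adic side of the Anscombe-Jahnke list transfers verbatim; in equicharacteristic zero there is nothing of this kind to check and the argument is purely valuation-theoretic.

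The remaining steps, in order, are: (i) a \emph{henselianity criterion for \NIPn fields} --- the analogue of the theorems that produce a nontrivial henselian valuation, externally definable, $\emptyset$-definable, or definable with parameters, on a field that is neither separably nor real closed; and (ii) the inductive step, which would use a two-way \NIPn AKE transfer: $(K,v)$ henselian is \NIPn iff the residue field is \NIPn and the ramification data satisfy the conditions of \Cref{AJ1wayintro}, the value group being automatically NIP since ordered abelian groups are. Induction on the number of ranks of the decomposition, or on a suitable rank of the residue field, would then collapse \NIPn to NIP. It would be reasonable to first test the conjecture on algebraically tame classes where the NIP structure theory is already complete, as evidence.

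\textbf{The main obstacle.} Even step (i) in its NIP form --- Shelah's conjecture that a NIP field is finite, separably closed, real closed, or henselian --- is open, so \Cref{NIPncon} is at least that hard; and the \NIPn setting removes nearly all the machinery that makes NIP field theory work (Shelah's expansion theorem, honest definitions, finite VC dimension, the sharp Baldwin-Saxl condition), which is exactly why the present paper must route around Shelah expansion. At present there is no henselianity criterion for \NIPn fields, and the $t$-henselianity arguments are NIP-flavoured; moreover only one direction of the Anscombe-Jahnke transfer is proved here, the converse \NIPn transfer being deferred to the announced follow-up. Finally, the Mekler construction shows that \NIPn is strictly broader than NIP already for pure \emph{groups}, so no soft argument forces the collapse for fields: one has to use that in a field every definable family of additive or multiplicative subgroups is tightly constrained by Artin-Schreier and Kummer theory --- the additive side of which is the content of \cite{H-KSWn} --- and then argue that a hypothetical strictly \NIPn field would be forced to encode its higher-order complexity through some configuration that is \emph{not} a coset pattern of such subgroups, which a field cannot support. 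Making that heuristic into a theorem is, I expect, where the genuine difficulty lies.
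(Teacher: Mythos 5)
You have not proved anything here, and you could not have: the statement you were given is \Cref{NIPncon}, which the paper states as a \emph{conjecture} (attributed to Hempel and Chernikov) and does not prove. Your proposal is explicitly a programme plus a discussion of obstacles, and it correctly identifies the situation --- in particular that the conjecture is at least as hard as a Shelah-type henselianity statement for \NIPn fields, and that the missing step is precisely a henselianity criterion, since Shelah expansion and the other NIP tools are unavailable. So there is no gap to repair in the usual sense; the only thing to flag is that your step (ii) quietly assumes a two-way \NIPn AKE transfer, whereas the paper proves only the left-to-right direction (\Cref{AJ1way}) and defers the transfer direction to a follow-up, so even the reduction you sketch is not yet fully available.

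For comparison, what the paper actually establishes towards \Cref{NIPncon} is more modest and partly orthogonal to your plan: evidence from \Cref{PACNSC} (PAC not separably closed fields have \IPn for all $n$) and \Cref{KSWH} (Artin-Schreier closure, made local in \Cref{locKSWH}); the one-way Anscombe-Jahnke result \Cref{AJ1way} obtained by explicit Artin-Schreier lifting in place of Shelah expansion; and, via \Cref{nipres}, the equivalence of the pure-field conjecture with its henselian-valued-field version (\Cref{coreqcon}), together with a verification in algebraic extensions of $\Qp$, where the residue fields are algebraic extensions of $\Fp$ and the $p$-adic valuation is definable. That equivalence is the paper's concrete contribution to the conjecture, and it fits your plan as the ``bootstrap along a henselian valuation'' half; the genuinely open part remains the one you name, namely producing the valuation (or separable/real closedness) from \NIPn alone.
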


This is for pure fields. Augmenting fields with arbitrary structure -- for example by adding a relation for a random hypergraph -- will of course break this conjecture, however, natural extensions of field structure such as valuations or distinguished automorphisms are believed to preserve it. Let us state this conjecture:
 
\begin{con}\label{NIPnhvcon}
 For $n\geqslant2$, strictly \NIPn henselian valued fields do not exist.
\end{con}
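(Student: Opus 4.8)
The content of the statement is the implication ``\NIPn henselian valued field $\Rightarrow$ NIP'', the converse being trivial. The plan is to reduce it to the pure-field case, \Cref{NIPncon}. Being \NIPn passes to reducts and, more generally, to interpretable structures, so if $(K,v)$ is \NIPn then so is the pure field $K$; granting \Cref{NIPncon}, $K$ is then NIP. Since $v$ is henselian, Jahnke's \Cref{Fran} applies and yields that $(K,v)$ is NIP. Thus, conditionally on \Cref{NIPncon}, the statement holds.

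Since \Cref{NIPncon} is itself open, the real point is how much can be done \emph{unconditionally}, and this is precisely what \Cref{AJ1wayintro} achieves (it applies here because $v$, being henselian, is in particular $p$-henselian). Using the explicit formulas of \Cref{locintro} lifted through $p$-henselianity, one forces the purely valuation-theoretic half of Anscombe--Jahnke's list -- finite ramification, (S)AMK, and so on -- to hold already under the weaker hypothesis that $K$ is merely \NIPn. Feeding this into Anscombe--Jahnke's classification of NIP henselian valued fields \cite{AJ-NIP}, the only clause left to verify is that the residue field $k$ be NIP. But $k$ is interpretable in $(K,v)$, hence \NIPn, so the statement is reduced -- now unconditionally -- to \Cref{NIPncon} \emph{for the residue field alone}. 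In particular, whenever $k$ is already known to be NIP the statement follows outright, which is the corollary announced in the introduction.

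The main obstacle is therefore entirely concentrated in \Cref{NIPncon}: we do not know that there are no strictly \NIPn pure fields, not even for $n=2$, and not even over reasonable subclasses of fields. Two natural lines of attack present themselves. The first is to establish \Cref{NIPncon} only for the rather rigid fields that can occur as residue fields in the Anscombe--Jahnke cases -- perfect, with bounded absolute Galois group, and with a tightly constrained interplay between their Artin--Schreier behaviour and the value group -- in the hope that the classification of \NIPn pure fields becomes feasible there even if it is not in general. The second is to upgrade \Cref{Fran} directly to ``\NIPn pure field plus henselian $\Rightarrow$ \NIPn valued field''; together with \Cref{NIPncon} this again closes the gap, but the proof of \Cref{Fran} runs through Shelah's expansion theorem, which is available only in NIP, so such an upgrade would require pushing the explicit-formula lifting of this paper well past the Artin--Schreier setting.
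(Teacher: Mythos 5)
Your reading of the situation is right: this statement is a conjecture, and the paper does not prove it unconditionally either -- its actual content is the equivalence with \Cref{NIPncon}, established in \Cref{coreqcon}. Your second paragraph is exactly the paper's argument: the residue field is interpretable, hence \NIPn, hence (granting \Cref{NIPncon}) NIP, and then \Cref{nipres} -- i.e.\ \Cref{AJ1way} fed into Anscombe--Jahnke's full classification, for which genuine henselianity (not just $p$-henselianity) is needed -- gives that $(K,v)$ is NIP; this also yields the unconditional corollary that $k$ NIP implies $(K,v)$ NIP. Your first paragraph is a genuinely different, shorter conditional route that the paper does not take: apply \Cref{NIPncon} to the pure field $K$ itself (a reduct, hence \NIPn) and then invoke Jahnke's \Cref{Fran}. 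That is valid as stated in the paper, but it buys less: it reduces \Cref{NIPnhvcon} to \Cref{NIPncon} for the valued field's underlying field, whereas the paper's route through the residue field reduces it to \Cref{NIPncon} for the residue field alone -- a much more tractable class (e.g.\ it settles algebraic extensions of $\Qp$, where the residue fields are algebraic extensions of $\Fp$ and hence known to be \NIPn iff NIP) -- and it does so without hiding Shelah-expansion machinery, which is the whole point of the explicit-formula lifting; note also that \Cref{NIPlift} is the contrapositive of \Cref{Fran}, so your shortcut is implicitly available in the paper but deliberately not leaned on. No gap in either route; just be explicit that both are conditional reductions, not proofs.
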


It is clear that \Cref{NIPnhvcon} implies \Cref{NIPncon} since the trivial valuation is henselian; we will in fact later prove that they are equivalent, see \Cref{coreqcon}.

We quote some results which make this conjecture somewhat believable:

\begin{prop}[Duret \cite{PAC}, Hempel \cite{Hem-ndep}]\label{PACNSC}
 Let $K$ be PAC and not separably closed. Then, $K$ has \IPn for all $n$.
\end{prop}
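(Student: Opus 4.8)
The plan, following Duret for $n=1$ and Hempel for general $n$, runs as follows. Since \IPn for a fixed formula is already witnessed on finite pieces, it suffices to produce, for each $m<\omega$, one formula $\phi$ and elements $(a^k_i)^{1\leqslant k\leqslant n}_{i<m}$, $(b_J)_{J\subseteq m^n}$ in a monster $\mathds K\succ K$ — which is still PAC, PAC being elementary — with $\mathds K\satisf\phi(b_J,a^1_{i_1},\tpp,a^n_{i_n})$ iff $(i_1,\tpp,i_n)\in J$. As $K$ is not separably closed, fix a separable irreducible $\mu(T)\in K[T]$ of degree $r\geqslant 2$ — say the minimal polynomial of a primitive element of a finite proper separable extension — and take
$$\phi(x;y_1,\tpp,y_n)\colon\exists t\;x=y_1\tpp y_n\,\mu(t),$$
that is, ``$x\cdot(y_1\tpp y_n)^{-1}$ lies in the value set $\mu(\mathds K)$''. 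This is exactly the shape of the formula of \Cref{locintro}, with the Artin--Schreier polynomial $\wp$ replaced by a general $\mu$: the product $y_1\tpp y_n$ supplies the $n$ independent directions, and as $\mu$ is irreducible of degree $\geqslant 2$ it has no root in $K$, hence none in $\mathds K$, so $0\notin\mu(\mathds K)$ and the formula is non-trivial. (In characteristic $p$, if $K$ has an Artin--Schreier extension one may just take $\mu=\wp$ and quote \Cref{locintro}; the content lies in characteristic $0$, and in characteristic $p$ with $K$ Artin--Schreier closed, where $\mu$ must genuinely not be Artin--Schreier.)

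Now fix $m$, choose the $a^k_i$ algebraically independent over $K$, and set $q_{\bar\imath}=a^1_{i_1}\tpp a^n_{i_n}$ for $\bar\imath\in m^n$, so that $q_{\bar\imath}/q_{\bar\jmath}$ is transcendental over $K$ whenever $\bar\imath\neq\bar\jmath$. Given $J\subseteq m^n$, we seek $b_J$ with $b_J/q_{\bar\imath}\in\mu(\mathds K)$ exactly for $\bar\imath\in J$. To force the polynomials indexed by $J$ to acquire a root, consider — over the field generated by $K$ and the $a^k_i$ — the variety
$$V_J=\set{(t_{\bar\imath})_{\bar\imath\in J}}{q_{\bar\imath}\mu(t_{\bar\imath})=q_{\bar\jmath}\mu(t_{\bar\jmath})\ \text{for all }\bar\imath,\bar\jmath\in J},$$
the fiber product over the affine line — coordinatized by the common value $b$ — of the rational curves $C_{\bar\imath}\colon b=q_{\bar\imath}\mu(t)$. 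If $V_J$ is geometrically integral, then, $\mathds K$ being PAC, it has a $\mathds K$-point, in fact a Zariski-dense set of them; taking $b_J$ to be the corresponding common value of $q_{\bar\imath}\mu(t_{\bar\imath})$ makes $b_J/q_{\bar\imath}\in\mu(\mathds K)$ for every $\bar\imath\in J$. One then chooses the point generically enough that $\mu(T)-b_J/q_{\bar\imath}$ has no root in $\mathds K$, i.e.\ $b_J/q_{\bar\imath}\notin\mu(\mathds K)$, for each $\bar\imath\notin J$. Carrying this out for every $m$ and applying compactness gives that $\phi$ has \IPn.

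Two points carry the real weight. The first is geometric integrality of $V_J$: a fiber product of covers of the affine line is typically reducible, so one must arrange that the branch loci $q_{\bar\imath}\cdot\set{\text{critical values of }\mu}$ are pairwise disjoint and that the relevant monodromy groups are in general position — this is exactly what the algebraic independence of the $a^k_i$ buys, possibly after choosing $\mu$ so that $\mu(T)-c$ has large monodromy over $\overline K(c)$ (so that point stabilizers are maximal and the covers linearly disjoint). The second is the root-freeness for $\bar\imath\notin J$: forcing roots for the indices in $J$ must not inadvertently force a root for some index outside $J$, which needs a finer analysis of where the common value $b$ can land, again leaning on the genericity of the parameters. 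For $n=1$ this is Duret's argument; the main new obstacle is the passage to general $n$, where the interval $m$ is replaced by the grid $m^n$ and the bookkeeping grows — but, exactly as in \Cref{locintro}, the multiplicative encoding $y_1\tpp y_n$ makes the $n$-dimensional case run in parallel to the one-dimensional one, and this is Hempel's contribution.
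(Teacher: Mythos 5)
The paper does not prove this proposition -- it is quoted from Duret and Hempel -- so your proposal has to be measured against their arguments, and there it has a genuine gap at exactly the point you flag as ``the second point carrying real weight''. For an arbitrary irreducible separable $\mu$ of degree $\geqslant 2$, the negative conditions $b_J/q_{\bar\imath}\notin\mu(\mathds K)$ for $\bar\imath\notin J$ cannot be obtained by ``choosing the point generically enough'': the PAC property gives Zariski-density of rational points on a geometrically integral variety, but the locus where $b/q_{\bar\imath}\in\mu(\mathds K)$ is the image of another cover, hence typically Zariski-dense and not avoidable by genericity; PAC fields give no control whatsoever over avoiding images of covers. Worse, the value set of your $\mu$ may be all of $K$: there are PAC, non-separably-closed fields whose absolute Galois group has no open subgroup of index $2$ (e.g.\ isomorphic to the additive group of $\mathds Z_q$ for an odd prime $q$), so that every element is a square and any quadratic $\mu$ yields a trivial formula. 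So both the choice of $\mu$ and the mechanism for the ``out'' conditions are missing, and the monodromy/branch-locus sketch for geometric integrality does not supply them.

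The missing idea in Duret's and Hempel's proofs is a reduction that makes \emph{all} conditions existential. One first passes to a finite separable extension $L$ of $K$ (interpretable in $K$, so \IPn transfers) admitting a Galois extension of prime degree $q$; if $q=\ch(K)$ one uses the Artin--Schreier polynomial and the additive subgroup $\wp(L)$, and if $q\neq\ch(K)$ one adjoins $\zeta_q$ (again a finite, interpretable extension, which keeps the degree-$q$ extension alive) and uses Kummer theory, i.e.\ the subgroup of $q$-th powers. Because the relevant value set is now a finite-index subgroup, non-membership is encoded \emph{positively} by membership in a fixed nontrivial coset ($b/q_{\bar\imath}\in c\cdot(L^\times)^q$ with $c$ a non-$q$-th power, resp.\ $b/q_{\bar\imath}\in c+\wp(L)$ with $c\notin\wp(L)$); all in/out conditions for a given $J$ then define a single fibre product of Kummer or Artin--Schreier covers, whose geometric integrality follows cleanly from Kummer/Artin--Schreier theory together with the algebraic independence of the parameters $a^k_i$ (this, plus the multiplicative bookkeeping with $y_1\cdots y_n$, is Hempel's $n$-dimensional extension), and one single application of the PAC property produces the witness $b_J$ with both the positive and the negative conditions guaranteed. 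Your outline, which forces roots only for $\bar\imath\in J$ and hopes to dodge roots elsewhere, would not go through for a general PAC field.
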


\begin{thm}[Hempel, \cite{H-KSWn}]\label{KSWH}
 Infinite \NIPn fields of characteristic $p$ are Artin-Schreier closed.
\end{thm}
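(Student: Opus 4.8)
The plan is to climb one rung up from the proof of \Cref{NIPAS}: replace Baldwin-Saxl's condition by an \NIPn analogue and feed it the same family of additive subgroups. Recall that in an infinite field $K$ of characteristic $p$ the sets $H_a=a\wp(K)=\set{a(t^p-t)}{t\in K}$, indexed by $a\in K^\times$, form a uniformly definable family of $\Fp$-subspaces of $(K,+)$, and that the formula $\phi(x;y_1,\dots,y_n)\colon\exists t\;x=y_1\cdots y_n(t^p-t)$ of \Cref{locintro} defines the membership relation $x\in H_{y_1\cdots y_n}$. Hence it suffices to prove that if $\wp(K)\neq K$ then $\phi$ has \IPn.

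The first step is to isolate the combinatorial input, an \NIPn version of Baldwin-Saxl: if $\phi(x;y_1,\dots,y_n)$ is \NIPn and $\phi(\mathds M;a_1,\dots,a_n)$ is a subgroup of a fixed group $G\subseteq\mathds M$ for all parameters, then there is $N<\omega$ bounding the size of any $n$-dimensional array $(a^k_i)^{1\le k\le n}_{i<m}$ that is ``strictly decreasing in every direction'', i.e. with $\bigcap_{\bar\imath}H_{a^1_{i_1},\dots,a^n_{i_n}}\subsetneq\bigcap_{\bar\imath\neq\bar\jmath}H_{a^1_{i_1},\dots,a^n_{i_n}}$ for every multi-index $\bar\jmath$. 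This mimics the ``$\Rightarrow$'' half of Baldwin-Saxl: from an infinite such array (which exists, by compactness, if no bound did) pick $b_{\bar\jmath}$ lying in every $H_{a^1_{i_1},\dots,a^n_{i_n}}$ except the one at $\bar\imath=\bar\jmath$, set $b_I=\prod_{\bar\jmath\in I}b_{\bar\jmath}$ for $I$ a subset of the index grid, and check that $\mathds M\satisf\phi(b_I,a^1_{i_1},\dots,a^n_{i_n})$ iff $\bar\imath\notin I$ — an \IPn configuration, contradiction.

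It then remains to produce, assuming only $\wp(K)\neq K$, arbitrarily large such arrays for $G=(K,+)$ and the subgroups $H_{a_1,\dots,a_n}=(a_1\cdots a_n)\wp(K)$; this forces \IPn for $\phi$, so $K$ is not \NIPn, as wanted. This construction is the crux, and is exactly what \Cref{locintro} packages as an explicit witness: one must choose the $a^k_i$ so that the cosets $a^1_{i_1}\cdots a^n_{i_n}\wp(K)$ and all their sub-grid intersections genuinely shrink along each of the $n$ coordinates simultaneously. The main obstacle is that the scalars live in the finite field $\Fp$, so a vector space may be covered by finitely many proper subspaces and one cannot simply invoke that a subspace is not such a union; the remedy — already needed for $n=1$ — is to use that $K$ is infinite to arrange that the relevant coset intersections drop by infinite codimension, and for $n\geq2$ to do so independently in all $n$ directions at once. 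I would carry out this last step directly, or else invoke \Cref{locintro} (equivalently \cite{H-KSWn}).
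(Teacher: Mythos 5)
Your proposal follows the paper's route exactly: reduce to the BSH$_n$ analogue of Baldwin--Saxl (your sketch of that combinatorial step is the paper's own argument) and apply it to the definable family $a_1\cdots a_n\wp(K)$ of additive subgroups, with the crux --- that this family can satisfy BSH$_n$ only if $\wp(K)=K$ --- deferred to \cite{H-KSWn}, which is precisely what the paper does as well. Be aware only that this deferred step is the real content of Hempel's paper and is considerably more involved than the ``infinite codimension'' arrangement you gesture at, and that invoking \Cref{locintro} for it is not an independent route, since \Cref{locKSWH} is itself obtained in this paper from that very step of \cite{H-KSWn}.
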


Overall, as soon as interesting results are obtained about or in the context of NIP fields, some people (mostly Nadja Hempel and Artem Chernikov) work hard to sneakily add $_n$ after NIP in these results. They succeed most of the time, though not always taking a straightforward route. \Cref{NIPncon} arose naturally from their work and can be attributed to Hempel, in duo with Chernikov.

Going back to \Cref{KSWH}, as for NIP fields, we want to know the formula witnessing \IPn in infinite fields with Artin-Schreier extensions; and, that is a promise, this time there will be a nice application; namely, \Cref{AJ1way}.

The proof of \Cref{KSWH} is similar to Kaplan-Scanlon-Wagner's argument, as one expects: in a \NIPn theory, definable families of subgroups check a certain analog of Baldwin-Saxl's condition. In characteristic $p$, $\set{a_1\cdots a_n\wp(K)}{\overline a\in K^n}$ is a definable family of additive subgroups. In order for it to check the aforementioned chain condition, we must have $\wp(K)=K$, by a similar argument as before.

\subsection{Baldwin-Saxl-Hempel's condition}
Let $T$ be a complete $\rond{L}$-theory, $\mathds M\satisf T$ a monster. Let $(G,\cdot)$ be a group, with $G$ contained in $\mathds M$.

Let $\phi(x,y_1,\!\cdots\!,y_n)$ be an $\rond{L}$-formula such that for all $(a_1,\tpp,a_n)\in\mathds M$, $H_{a_1,\tpp,a_n}=\phi(M,a_1,\tpp,a_n)$ is a subgroup of $G$.

\begin{prop}[Hempel]
  The formula $\phi$ is said to check the BSH$_n$-condition if there is $N$ (depending only on $\phi$) such that for any $d$ greater or equal to $N$ and any array of parameters $(a^{i}_{j})^{1\leqslant i\leqslant n}_{j\leqslant d}$, there is $\overline{k}=(k_1,\tpp,k_n)\in\set{0,\tpp,N}^n$ such that:
  $$\bigcap_{\overline{j}} H_{\overline{j}}=\bigcap_{\overline{j}\neq\overline{k}} H_{\overline{j}}$$
  with $H_{\overline{j}}=H_{a^1_{j_1},\tpp,a^n_{j_n}}$.
  
  The formula $\phi$ checks the BSH$_n$ condition iff $\phi$ is \NIPn.  
\end{prop}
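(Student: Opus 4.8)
The plan is to mimic the proof of Baldwin--Saxl given above, now with $n$ rows of parameters, the only genuine extra work being the bookkeeping of multi-indices. I would prove both implications by contraposition, using throughout the elementary group-theoretic observation that if a product $x_1\cdots x_m$ lies in a subgroup $H\leqslant G$ and all but one of the factors lie in $H$, then so does the remaining one (peel off from either side the factors lying in $H$); equivalently, if exactly one factor is outside $H$, the product is outside $H$. This is what allows us to manufacture, and to detect, an \IPn pattern from (non-)redundancy of the subgroups $H_{\overline j}$, with no hypothesis on $G$ beyond being a group.

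For ``BSH$_n$ $\Rightarrow$ \NIPn'', suppose $\phi$ has \IPn, witnessed by $(a^i_j)_{j<\omega}$ and $(b_J)_{J\subseteq\omega^n}$ with $\mathds M\satisf\phi(b_J,a^1_{j_1},\tpp,a^n_{j_n})$ iff $\overline j\in J$. Fix any $d\geqslant N$ and restrict to the subarray $(a^i_j)^{1\leqslant i\leqslant n}_{j\leqslant d}$. By BSH$_n$ there is $\overline k\in\set{0,\tpp,N}^n\subseteq\set{0,\tpp,d}^n$ with $\bigcap_{\overline j}H_{\overline j}=\bigcap_{\overline j\neq\overline k}H_{\overline j}$, the intersections ranging over $\overline j\in\set{0,\tpp,d}^n$. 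Put $J=\set{0,\tpp,d}^n\setminus\set{\overline k}$ and $b=b_J$. Then $b\in H_{\overline j}$ for every $\overline j\neq\overline k$ in the box, hence $b\in\bigcap_{\overline j\neq\overline k}H_{\overline j}=\bigcap_{\overline j}H_{\overline j}\subseteq H_{\overline k}$, i.e. $\mathds M\satisf\phi(b,a^1_{k_1},\tpp,a^n_{k_n})$, i.e. $\overline k\in J$ --- contradicting $\overline k\notin J$.

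For ``\NIPn $\Rightarrow$ BSH$_n$'', suppose the BSH$_n$-condition fails for every $N$. Fix $N$; we obtain $d\geqslant N$ and an array $(a^i_j)^{1\leqslant i\leqslant n}_{j\leqslant d}$ such that for every $\overline k\in\set{0,\tpp,N}^n$ one has $\bigcap_{\overline j}H_{\overline j}\subsetneq\bigcap_{\overline j\neq\overline k}H_{\overline j}$ (intersections over $\overline j\in\set{0,\tpp,d}^n$), so we may pick $b_{\overline k}\in\bigcap_{\overline j\neq\overline k}H_{\overline j}\setminus H_{\overline k}$. For $I\subseteq\set{0,\tpp,N}^n$ set $b_I=\prod_{\overline k\in I}b_{\overline k}$ in $G$. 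If $\overline j\notin I$ every factor lies in $H_{\overline j}$, so $b_I\in H_{\overline j}$; if $\overline j\in I$ every factor but $b_{\overline j}$ lies in $H_{\overline j}$, so by the peeling observation $b_I\notin H_{\overline j}$. Hence $\mathds M\satisf\phi(b_I,a^1_{j_1},\tpp,a^n_{j_n})$ iff $\overline j\notin I$ for all $\overline j\in\set{0,\tpp,N}^n$; re-indexing by complements this is an \IPn pattern over index sets of size $N+1$ in each coordinate. Since this works for every $N$, $\phi$ has arbitrarily long finite \IPn patterns, so by compactness $\phi$ has \IPn, contradicting \NIPn.

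The arguments are routine once the template is set up; the only place demanding care is the precise reading of the BSH$_n$-condition --- in particular that the redundant multi-index $\overline k$ is confined to the small box $\set{0,\tpp,N}^n$ while the indices $\overline j$ range over the full $d$-sized array --- and respecting this asymmetry when translating between failures of redundancy and \IPn patterns, plus the (standard) compactness step reducing full \IPn to arbitrarily long finite patterns. In our intended applications $G=(K,+)$ is abelian, so the order of the products $b_I$ is irrelevant, but as noted the peeling observation makes the general statement cost nothing extra.
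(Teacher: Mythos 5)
Your proof is correct and takes essentially the same route as the paper's: one direction via the product-of-witnesses construction $b_I=\prod_{\overline k\in I}b_{\overline k}$ (with the peeling observation the paper leaves implicit in ``the order of operation doesn't matter''), the other via the element coding the whole box minus $\overline k$, finished off by compactness. Your explicit treatment of the asymmetry between the array size $d$ and the small box $\set{0,\tpp,N}^n$ is in fact slightly more careful than the paper, which simply takes $d=N$, but the argument is the same.
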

\begin{proof}
    This is a very natural \NIPn version of Baldwin-Saxl, first stated by Hempel in \cite{H-KSWn}. However, as for Baldwin-Saxl, it is usually not stated as an equivalence. We include a proof for convenience.

\paragraph{$\Leftarrow$:}
  Let $\phi$ be \NIPn, and suppose that the BSH$_n$ condition is not checked for $N$, so one can find $(a^{i}_{j})^{1\leqslant i\leqslant n}_{j\leqslant N}\in A$ such that
  $$\bigcap_{\overline{j}} H_{\overline{j}}\subsetneq\bigcap_{\overline{j}\neq\overline{k}} H_{\overline{j}}$$
  for any $\overline{k}\in\set{0,\tpp,N}^n$. 
  
  We take $b_{\overline{j}}\notin H_{\overline{j}}$ but in every other $H_{\overline{k}}$. Then for any $J\subset\set{0,\tpp,N}^n$, we define $b_J=\prod_{\overline{j}\in J} b_{\overline{j}}$, where the product denotes the group law of $G$ -- the order of operation doesn't matter. We have $\mathds M\satisf \phi(b_J,a^1_{j_1},\tpp,a^n_{j_n})$ iff $b_J\in H_{\overline{j}}$ (by definition of $H$), and it is the case iff $\overline{j}\notin J$. If this were to hold for arbitrarily large $N$, we would have \IPn for $\phi$. Thus, if $\phi$ is \NIPn, there is a maximal such $N$.
  
  \paragraph{$\Rightarrow$:} Suppose that $\phi$ checks the BSH$_n$ condition for $N$, and suppose we can find $(a^{i}_{j})^{1\leqslant i\leqslant n}_{j\leqslant N}\in A$ and $(b_J)_{I\subset\set{0,\!\cdots\!,N}^n}\in G$ such that $\mathds M\satisf\phi(b_J,a^1_{j_1},\tpp,a^n_{j_n})$ iff $\overline{j}\in J$. Now by assumption, there is $\overline{k}$ such that $\bigcap_{\overline{j}} H_{\overline{j}}=\bigcap_{\overline{j}\neq\overline{k}} H_{\overline{j}}$. But now, let $b=b_{J\setminus\set{\overline{k}}}$; we know that $M\satisf\phi(b,a^1_{j_1},\tpp,a^n_{j_n})$ iff $\overline{j}\neq\overline{k}$, which means that $b\in \bigcap_{\overline{j}\neq\overline{k}} H_{\overline{j}}$. But this means $b\in H_{\overline{k}}$, which yields $\mathds M\satisf\phi(b,a^1_{k_1},\tpp,a^n_{k_n})$ and contradicts the choice of $b$.
\end{proof}

\subsection{Artin-Schreier closure of \texorpdfstring{\NIPn}{NIPn} fields}

\begin{cor}[Local KSWH]\label{locKSWH}
 In an infinite field $K$ of characteristic $p>0$, the formula $\phi(x;y_1,\tpp,y_n)\colon\exists t\,x=y_1 y_2\cdots y_n(t^p-t)$ is \NIPn iff $K$ has no AS-extension.
\end{cor}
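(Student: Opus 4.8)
The plan is to mimic \Cref{locKSW} line by line, replacing Baldwin--Saxl by the Baldwin--Saxl--Hempel proposition. First I would apply the latter to the group $(G,\cdot)=(K,+)$ and the formula $\phi(x;y_1,\tpp,y_n)$ at hand. Since in characteristic $p$ the Frobenius is additive, $\wp(X)=X^p-X$ is an $\Fp$-linear endomorphism of $(K,+)$, so $\wp(K)$ is a subgroup and, for every $\overline a\in K^n$, $H_{\overline a}=\phi(M,\overline a)=a_1\tpp a_n\,\wp(K)$ is the image of $\wp(K)$ under the (possibly zero) endomorphism ``multiplication by $a_1\tpp a_n$'', hence again a subgroup of $(K,+)$. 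So the hypotheses of the BSH$_n$-proposition hold, and $\phi$ is \NIPn iff the family $(H_{\overline a})_{\overline a\in K^n}$ checks the BSH$_n$-condition.

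For the direction ``no AS-extension $\Rightarrow$ \NIPn'': if $K$ is AS-closed then $\wp(K)=K$, so each $H_{\overline a}$ is either $K$ or $\set{0}$, and the family has at most two distinct members. A family of subgroups with a bounded number of distinct members trivially checks BSH$_n$: choosing $N$ with $(N+1)^n>2$, in any array with $d\geq N$ two of the $(N+1)^n$ tuples $\overline k\in\set{0,\tpp,N}^n$ must give the same subgroup by pigeonhole, and deleting one of the two from $\bigcap_{\overline j}H_{\overline j}$ does not change the intersection. Hence $\phi$ is \NIPn, exactly as in the trivial direction of \Cref{locKSW}.

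For the converse I would argue by contraposition: if $K$ has an Artin--Schreier extension then $\wp(K)\subsetneq K$, and the claim is that the family $\set{a_1\tpp a_n\wp(K)}{\overline a\in K^n}$ then fails the BSH$_n$-condition, so that $\phi$ has \IPn. Unwinding, for each $N$ one must produce an array $(a^i_j)^{1\leq i\leq n}_{j\leq N}$ such that no $H_{\overline k}$ with $\overline k\in\set{0,\tpp,N}^n$ is redundant in $\bigcap_{\overline j}H_{\overline j}$, that is, such that for every such $\overline k$ there is a witness $b_{\overline k}\in\bigl(\bigcap_{\overline j\neq\overline k}H_{\overline j}\bigr)\setminus H_{\overline k}$. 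This is precisely the field-theoretic core of Hempel's proof of \Cref{KSWH}, the \NIPn adaptation of the Kaplan--Scanlon--Wagner argument recalled in the proof summary of \Cref{NIPAS}, and I expect it to be the only genuine obstacle: turning the bare inequality $\wp(K)\neq K$ into an unbounded, sufficiently ``generic'' family of additive subgroups. I would therefore simply invoke \cite{H-KSWn} for this step, exactly as \Cref{locKSW} invokes \cite{KSW} in the case $n=1$; combined with the previous paragraph, this yields the stated equivalence.
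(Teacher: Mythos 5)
Your proposal is correct and follows essentially the same route as the paper: apply the Baldwin--Saxl--Hempel equivalence to $(K,+)$ and the family $a_1\cdots a_n\wp(K)$, note the trivial direction when $\wp(K)=K$, and cite \cite{H-KSWn} for the hard implication that failure of Artin--Schreier closure forces the family to violate the BSH$_n$-condition. The extra details you supply (the subgroup check and the pigeonhole argument for the trivial direction) are fine elaborations of what the paper dismisses as obvious.
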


\begin{proof}
    Apply the previous result with $(G,\cdot)=(K,+)$ and $\phi$ as given: $\phi$ is \NIPn iff the family $H_{a_1,\tpp,a_n}=a_1 a_2 \cdots a_n\wp(K)$ checks the BSH$_n$ condition. This then implies that $K$ is AS-closed, see \cite{H-KSWn} -- again, this is the hard part of the proof. The opposite direction is quite trivial: if $K$ is AS-closed, then $\wp(K)=K$, so the BSH$_n$ condition is obviously checked.
\end{proof}

\subsection{Lifting}
Ideally, we would like a \NIPn version of \Cref{NIPlift}. But this relies on \Cref{Fran}, the proof of which needs Shelah's expansion theorem, which fails in general for \NIPn structures; notably, it fails for the random graph.

However, thanks to the explicit formula obtained before and with the help of $p$-henselianity, we can lift \IPn in the case where it is witnessed by Artin-Schreier extensions:

\begin{lem}
 Suppose $(K,v)$ is $p$-henselian and has a residue field $k$ infinite, of characteristic $p$, and not AS-closed; then $K$ has \IPn witnessed by $\phi(x;y_1,\tpp,y_n)\colon\exists t\,x=y_1\cdots y_n(t^p-t)$.
\end{lem}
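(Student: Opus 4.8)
The plan is to mimic exactly the lifting argument already carried out in the NIP case (the Lemma following \Cref{locKSW}), just with the \NIPn witness pattern in place of the NIP one. The key observation is that the formula $\phi(x;y_1,\tpp,y_n)\colon\exists t\,x=y_1\cdots y_n(t^p-t)$ still expresses nothing more than ``the polynomial $P(T)=y_1\cdots y_n(T^p-T)-x$ has a root'', and this is a separable polynomial of degree $p$ whenever $y_1\cdots y_n\neq 0$. Separable polynomials with a simple root in the residue field lift to roots in the field by $p$-henselianity, so the \IPn pattern transfers.

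Concretely, first I would invoke \Cref{locKSWH}: since $k$ is infinite, of characteristic $p$, and not AS-closed, the formula $\phi$ has \IPn over $k$. Unpacking the definition of \IPn, this gives arrays $(a^s_i)^{1\leqslant s\leqslant n}_{i<\omega}$ in $k$ and $(b_J)_{J\subset\omega^n}$ in $k$ such that $k\satisf\phi(b_J,a^1_{i_1},\tpp,a^n_{i_n})$ iff $(i_1,\tpp,i_n)\in J$; that is, the polynomial $P_{\overline i,J}(T)=a^1_{i_1}\cdots a^n_{i_n}(T^p-T)-b_J$ has a root in $k$ iff $\overline i\in J$. Note we may assume all the $a^s_i$ are nonzero (a not-AS-closed field is in particular nontrivial as a group under $\wp$, and the \IPn pattern from \Cref{locKSWH} can be taken with nonzero parameters, or one simply discards the index for which $a^s_i=0$, of which there is at most one per row, and reindexes).

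Next I would lift: choose arbitrary preimages $\alpha^s_i\in K$ of $a^s_i$ and $\beta_J\in K$ of $b_J$ under the residue map. Then the polynomial $\widetilde P_{\overline i,J}(T)=\alpha^1_{i_1}\cdots\alpha^n_{i_n}(T^p-T)-\beta_J\in K[T]$ reduces mod $v$ to $P_{\overline i,J}(T)$. Since $a^1_{i_1}\cdots a^n_{i_n}\neq 0$ in $k$, the product $\alpha^1_{i_1}\cdots\alpha^n_{i_n}$ is a unit, so $\widetilde P_{\overline i,J}$ has the same degree $p$ as $P_{\overline i,J}$ and is separable; its reduction has a simple root in $k$ exactly when $\overline i\in J$. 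By $p$-henselianity of $(K,v)$ — which is precisely the statement that simple roots of (the relevant) polynomials in the residue field lift — $\widetilde P_{\overline i,J}$ has a root in $K$ iff $\overline i\in J$, i.e. $K\satisf\phi(\beta_J,\alpha^1_{i_1},\tpp,\alpha^n_{i_n})$ iff $\overline i\in J$. Hence $\phi$ witnesses \IPn in $K$ as a pure field.

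I do not expect a serious obstacle here; the argument is the verbatim analogue of the NIP lifting lemma. The only point requiring a line of care is ensuring the leading coefficients $\alpha^1_{i_1}\cdots\alpha^n_{i_n}$ are units so that $p$-henselianity applies to a genuinely degree-$p$ separable polynomial — this is why one wants the residue parameters $a^s_i$ to be nonzero, and one should briefly justify that the \IPn pattern of \Cref{locKSWH} survives restricting to such parameters (each row loses at most one index, and \IPn is preserved under passing to infinite subsequences of each row).
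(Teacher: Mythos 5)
Your proposal is correct and follows essentially the same route as the paper's own proof: invoke \Cref{locKSWH} to get an \IPn pattern for $\phi$ in $k$, lift the parameters arbitrarily, and use $p$-henselianity to transfer existence (and non-existence) of the simple roots of the degree-$p$ separable polynomials. Your extra care about nonzero leading coefficients is a harmless refinement the paper leaves implicit (and in fact a zero parameter $a^s_i$ cannot occur in a genuine \IPn pattern for this $\phi$, since it would force contradictory conditions on the $b_J$'s).
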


\begin{proof}
  By assumption and by \Cref{locKSWH}, there are $(a^i_j)^{1\leqslant i\leqslant n}_{j<\omega}$ and $(b_J)_{J\subset\omega^n}$ such that $k\satisf\phi(b_J,a^1_{j_1},\tpp,a^n_{j_n})$ iff $\overline{j}\in J$, that is, $P_{\overline{j},J}(T)=a^1_{j_1}\cdots a^n_{j_n}(T^p-T)-b_J$ has a root in $k$ iff $\overline{j}\in J$. But by $p$-henselianity, since roots of this polynomial are all simple, taking any lift $\alpha^i_j$, $\beta_J$ of $a^i_j$ and $b_J$, $P_{\overline j,J}(T)=\alpha^1_{j_1}\cdots\alpha^n_{j_n}(T^p-T)-\beta_J$ has a root in $K$ iff $\overline j\in J$, thus $K\satisf\phi(\beta_J,\alpha^1_{j_1},\tpp,\alpha^n_{j_n})$ iff $\overline{j}\in J$.
\end{proof}

So, in this specific case, we don't need the valuation to witness \IPn. This fact will have fruitful applications, most importantly \Cref{AJ1way}.

\subsection{\texorpdfstring{\NIPn}{NIPn} henselian valued fields}\label{sec-AJltr}

Throughout this section, $p$ will always equal the residue characteristic of a valued field. When we say that $(K,v)$ is $p$-henselian, we mean $p$-henselian when $p>0$ and we mean nothing if $p=0$.

Our goal is now to prove the following:

\begin{thm}\label{AJ1way}
Let $(K,v)$ be a $p$-henselian valued field. If $K$ is \NIPn, then either:
\begin{enumerate}
\item $(K,v)$ is of equicharacteristic 0, or
\item\label{forbelow4} $(K,v)$ is of equicharacteristic $p>0$ and is either trivially valued or SAMK, or
\item $(K,v)$ has mixed characteristic $(0,p)$, $(K,v_p)$ is finitely ramified, and $(k_p,\overline{v})$ checks \ref{forbelow4}, or
\item $(K,v)$ has mixed characteristic $(0,p)$ and $(k_0,\overline{v})$ is AMK.
\end{enumerate}
\end{thm}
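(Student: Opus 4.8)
The plan is to first recall the abbreviations and then reduce the statement to the three key "closure" facts: that in a $p$-henselian \NIPn field the relevant residue field has no Artin-Schreier extension, that the relevant value group pieces are $p$-divisible, and that the residue field pieces are perfect (equivalently, $p$-closed in the appropriate sense). Recall that SAMK means "separably algebraically maximal Kaplansky" and AMK means "algebraically maximal Kaplansky"; by the standard Kaplansky characterization, an equicharacteristic-$p$ henselian valued field $(F,w)$ is SAMK iff $k_w$ is perfect and $\Gamma_w$ is $p$-divisible, and a mixed-characteristic henselian field is AMK iff it is algebraically maximal with $p$-divisible value group and perfect residue field. So I would split along $\ch(K)$ and $\ch(k)$ exactly as in the statement, and in each case verify these two divisibility/perfectness conditions using the explicit formulas from \Cref{locKSWH} together with the lifting lemma.

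Here are the steps in order. (1) In equicharacteristic $p$: if $v$ is nontrivial, pass (in a monster model) to a nontrivial proper coarsening $w$ with residue characteristic $p$; then $(k_w,\overline v)$ is an equicharacteristic-$p$ henselian valued field whose residue field is a residue field of $K$. Since $K$ is \NIPn, so is $k_w$ (as an interpretable structure / elementary reduct considerations), hence by \Cref{KSWH} it is Artin-Schreier closed; running this over all coarsenings forces $k_v$ to be perfect and $\Gamma_v$ to be $p$-divisible, giving SAMK. The nontriviality of the coarsening is where the Artin-Schreier obstruction bites: a nontrivial equicharacteristic-$p$ henselian valued field with imperfect residue field or non-$p$-divisible value group has an Artin-Schreier extension, contradicting \Cref{KSWH} applied through the lifting lemma. (2) In mixed characteristic, invoke the standard decomposition $K\to k_0\to k_p\to k_v$ around $p$. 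The field $k_0$ has characteristic $0$ and $k_p$ has characteristic $p$; the chunk $\Delta_0/\Delta_p$ is rank one. Apply the $p$-henselian lifting lemma: if some residue field appearing below the mixed-characteristic cut were infinite, of characteristic $p$, and not Artin-Schreier closed, then $K$ would have \IPn, contradiction. This forces $(k_p,\overline v)$ to be either trivially valued or SAMK (that is, case \ref{forbelow4} for $k_p$) — this is case (3) — unless the coarser valuation $v_p$ is not finitely ramified, in which case one instead has to control $(k_0,\overline v)$ directly. (3) Show that if $(K,v_p)$ is not finitely ramified then $(k_0,\overline v)$ is AMK: here the value group of $k_0$ (namely $\Delta_0$) is infinitely ramified above $p$, and one again uses the explicit Artin-Schreier formula, lifted through $p$-henselianity, to force $p$-divisibility of the value group and perfectness of the residue field of $(k_0,\overline v)$, i.e. the Kaplansky + algebraically maximal conditions. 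This yields case (4), and (1) is the remaining case $\ch(k)=0$ with nothing to prove.

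The main obstacle I expect is step (2)–(3): making the dichotomy "$(K,v_p)$ finitely ramified versus $(k_0,\overline v)$ AMK" precise and exhaustive. Concretely, one must argue that when $v_p$ has infinite ramification, the relevant value-group quotient has a non-$p$-divisible piece unless $k_0$ is already AMK, and that in the finitely ramified case the residue pressure is pushed down onto $(k_p,\overline v)$. This requires a careful bookkeeping of which residue fields and value groups are interpretable in $K$ (so that \NIPn descends to them) and an appeal to the valuation-theoretic fact that a mixed-characteristic henselian field which is not AMK and not finitely ramified admits, after suitable coarsening, an equicharacteristic-$p$ henselian quotient that is not Artin-Schreier closed — at which point the lifting lemma finishes. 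A secondary subtlety is ensuring that $p$-henselianity (rather than full henselianity) suffices at each lifting step; this is exactly what the lifting lemma was set up to provide, since the witnessing polynomials $\alpha^1_{j_1}\cdots\alpha^n_{j_n}(T^p-T)-\beta_J$ are separable of degree $p$, so $p$-henselianity lifts their roots. Once these pieces are in place, the theorem follows by assembling the cases.
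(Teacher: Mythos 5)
There is a genuine gap: you treat SAMK and AMK as if they were equivalent to ``$p$-divisible value group $+$ perfect residue field''. That characterization is false — Kaplansky already also requires the residue field to admit no finite separable extension of degree divisible by $p$, and, more importantly, the ``(separably) algebraically maximal'' part is an additional defectlessness-type condition that does not follow from $p$-divisibility and perfection. Because of this, your proposal never actually proves the maximality half of the theorem, which is where most of the work lies. In the equicharacteristic $p$ case the paper does not coarsen at all: it applies \Cref{KSWH} directly to $K$ (which has characteristic $p$), and the point is that Artin--Schreier closedness of $K$ itself rules out \emph{all} separable algebraic extensions of degree divisible by $p$ (by \cite[4.4]{KSW}), which is what yields separable defectlessness, $p$-divisibility and the residue conditions in one stroke; perfection of the residue then comes from the explicit polynomial $X^p-mX-a$. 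Your route via residue fields of coarsenings gives at best Kaplansky-type conditions and cannot produce separable algebraic maximality of $(K,v)$; it also relies on $k_w$ being interpretable for a coarsening $w$ that need not be definable, which is exactly the issue the explicit lifting lemma is designed to avoid.

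The same gap recurs in the infinitely ramified mixed characteristic case: ``$p$-divisibility plus perfectness of $(k_0,\overline v)$'' is not algebraic maximality. The paper's proof of \Cref{->0p} needs a separate argument there: pass to an $\aleph_1$-saturated extension so that $\Delta_0^*/\Delta_p^*\cong\mathds R$, prove $k_p$ is perfect via \Cref{resperf} (again a saturation argument, applied to $(K^*,v_p^*)$), deduce that $(k_p^*,\overline{v^*})$ is algebraically (not just separably algebraically) maximal, use saturation to get that $(k_0^*,\overline{v_p^*})$ is maximal hence defectless, compose defectless valuations, and finally use that defectlessness is first order to descend to $(K,v)$ and hence to $(k_0,\overline v)$. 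None of this bookkeeping appears in your step (3), and it cannot be replaced by another application of the Artin--Schreier formula alone, since Artin--Schreier closedness controls degree-$p$ separable extensions but does not by itself exclude defect in the mixed characteristic piece. Your identification of where $p$-henselianity (rather than henselianity) is used — namely in lifting simple roots of $\alpha^1_{j_1}\cdots\alpha^n_{j_n}(T^p-T)-\beta_J$ — is correct, but the theorem does not follow until the maximality statements are actually established.
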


Let $(K,v)$ be \NIPn (as a valued field). Since the residue field is interpretable in a \NIPn structure, it is also \NIPn. In equicharacteristic 0, there is nothing to prove. We do the equicharacteristic $p$ case in the same way as for NIP fields:

\begin{lem}\label{->pp}
 If $(K,v)$ is \NIPn and of equicharacteristic $p$, then it is SAMK or trivial. We do not assume any henselianity here.
\end{lem}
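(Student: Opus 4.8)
The plan is to transcribe, essentially word for word, the Anscombe--Jahnke argument for the equicharacteristic-$p$ case of their NIP classification, replacing each use of Kaplan--Scanlon--Wagner by \Cref{KSWH}. If $v$ is trivial we are in the trivial case and there is nothing to prove, so assume $v$ is nontrivial; then $K$ is infinite. Two reducts of the \NIPn structure $(K,v)$ are \NIPn for free: the residue field $k$, which is interpretable, and the pure field $K$, which is a literal reduct. Feeding $K$ into \Cref{KSWH} gives that $K$ is Artin--Schreier closed, and, when $k$ is infinite, feeding in $k$ gives that $k$ is Artin--Schreier closed as well; a finite residue field of characteristic $p$ is handled separately, since it is not Artin--Schreier closed and hence the lifting lemma of the preceding subsection, applied to a suitable $p$-henselian coarsening, would produce \IPn in $K$ --- contradicting \NIPn-ity --- so that case does not arise with $v$ nontrivial.

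Next I would read off the structural content of SAMK from ``$K$ Artin--Schreier closed'' and ``$k$ \NIPn'', first under the temporary extra assumption that $(K,v)$ is henselian. For henselian $(K,v)$ of equicharacteristic $p$, the $\Fp$-space $K/\wp(K)$ splits along the valuation: an element of positive value always lies in $\wp(K)$ by Hensel (the polynomial $X^p-X$ being separable); an element of value $0$ lies in $\wp(K)$ exactly when its residue lies in $\wp(k)$; an element of negative value prime to $p$ contributes a ramified Artin--Schreier extension, measured by $\Gamma_v/p\Gamma_v$; and the remainder --- negative value divisible by $p$ --- is precisely where Artin--Schreier \emph{defect} lives. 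Hence $\wp(K)=K$ forces $k$ Artin--Schreier closed (already known), $\Gamma_v$ $p$-divisible, and no Artin--Schreier defect extension, which by Kuhlmann's analysis of defect in equicharacteristic $p$ upgrades to separable algebraic maximality. The residual Kaplansky conditions on the residue field would follow from $k$ being \NIPn together with \Cref{KSWH} (applied also to the residue fields of coarsenings of $v$) and basic ramification theory; in particular an imperfect residue field is ruled out, if need be directly, by an explicit \IPn pattern in $(K,v)$ combining $k^p\subsetneq k$ with $v$.

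The last task is to discharge the henselianity assumption --- this is exactly what ``we do not assume any henselianity here'' signals, and I expect it to be the only genuinely new ingredient over a mechanical translation of the NIP proof, hence the main obstacle. One must show that a \NIPn valued field of equicharacteristic $p$ with nontrivial valuation is already henselian, and here the clean lifting lemma above cannot simply be quoted, as it takes $p$-henselianity as an input. I would argue the contrapositive, manufacturing \IPn from a non-henselian valuation: compare $v$ with the canonical $p$-henselian valuation $v_K^p$ of $K$ (which is $p$-henselian by construction); were $v_K^p$ nontrivial with an infinite, non-Artin--Schreier-closed residue field of characteristic $p$, the lifting lemma would put \IPn into the pure field $K$, contradicting \NIPn-ity of $(K,v)$, so $v_K^p$ is severely constrained, and one exploits its comparison with $v$ --- together with $K$ being Artin--Schreier closed --- to force $v$ itself to be henselian; a more hands-on alternative is to isolate a definable failure of Hensel's lemma (a family of polynomials with a simple residue root that fails to lift) and index it to read off \IPn directly. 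Once henselianity is secured, the decomposition of $K/\wp(K)$ above applies verbatim and $(K,v)$ is SAMK.
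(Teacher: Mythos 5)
Your plan diverges from the paper's proof exactly at the step you yourself identify as ``the main obstacle'', and that step is a genuine gap. You propose to (i) establish the SAMK conditions under a temporary henselianity hypothesis, by splitting $K/\wp(K)$ along the valuation using Hensel's lemma, and then (ii) discharge that hypothesis by proving that a \NIPn valued field of equicharacteristic $p$ with nontrivial valuation is automatically henselian, via comparison with the canonical $p$-henselian valuation or by ``indexing a definable failure of Hensel's lemma to read off \IPn''. Step (ii) is not an argument. Once you know $K$ is Artin--Schreier closed (which you do get from \Cref{KSWH}), $K$ has no Galois extension of degree $p$ at all, so \emph{every} valuation on $K$ is vacuously $p$-henselian and the canonical $p$-henselian valuation degenerates: comparing $v$ with it says nothing about henselianity of $v$. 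Nor is there any known formula turning a failure of Hensel's lemma into an \IPn pattern --- the only configurations this paper converts into \IPn patterns are Artin--Schreier ones. The paper's proof never routes through henselianity or Hensel's lemma at all. It quotes \cite[4.4]{KSW}: an AS-closed field of characteristic $p$ has no separable algebraic extension of degree divisible by $p$. From this, separable defectlessness is immediate because defects are $p$-powers; $p$-divisibility of $\Gamma$ and AS-closedness of $k$ follow from $\wp(K)=K$ by direct valuation computations (if $v(a)<0$ and $a=t^p-t$ then $v(a)=pv(t)$; a value-zero lift of a residue counterexample is already a counterexample in $K$); and perfection of $k$ comes from the explicit separable polynomial $X^p-mX-a$ with $v(m)>0$, whose rootlessness contradicts the quoted fact. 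No case distinction on henselianity is ever needed, which is precisely what makes the ``no henselianity assumed'' claim in the statement workable.

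Two further points. Your handling of the residue field is also broken as written: you apply \Cref{KSWH} to $k$, which requires $k$ infinite, and you dispose of finite $k$ by invoking the lifting lemma ``applied to a suitable $p$-henselian coarsening'' --- but no $p$-henselian coarsening is available (henselianity is exactly what you lack), and the lifting lemma additionally requires an infinite residue field. The paper needs no such detour, since AS-closedness of $k$ (which in particular rules out finite $k$) is read off directly from $\wp(K)=K$. Finally, even inside your henselian interlude, the inference ``no Artin--Schreier defect extension, hence separably algebraically maximal by Kuhlmann's analysis'' is imprecise: one must exclude all immediate finite separable extensions, not only the degree-$p$ Galois ones, and in the paper this is again covered in one stroke by the fact that no separable extension has degree divisible by $p$.
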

This is a \NIPn version of {\cite[3.1]{AJ-NIP}}.
\begin{proof}
  If $v$ is trivial, then we're done. Assume not. By \Cref{KSWH}, $K$ is AS-closed; this implies that it has no separable algebraic extension of degree divisible by $p$ (see \cite[4.4]{KSW}). Then it is clearly separably defectless, it has $p$-divisible value group, and AS-closed residue. Remains to prove that the residue is perfect. Suppose $\alpha\in k$ has no $p$\textsuperscript{th}-root in $k$, and consider $X^p-mX-a$, where $v(m)>0$ (but non-zero; remember than $v$ is non-trivial) and where $a$ is a lift of $\alpha$. Then this polynomial has no root, thus $K$ is not AS-closed.
\end{proof}

Now, for the mixed characteristic case, we will follow Anscombe-Jahnke's proof for the most part, except we swap Shelah's expansion for explicit Artin-Schreier lifting; while Anscombe-Jahnke's argument works in arbitrary valued fields, ours rely on lifting and thus can't work if we do not assume at least $p$-henselianity.

\begin{lem}\label{resperf}
 Let $(K,v)$ be a \NIPn $p$-henselian valued field. Then $v$ has at most one coarsening with imperfect residue field. If such a coarsening exists, then $p>0$, and this coarsening is the coarsest coarsening $w$ of $v$ with residue characteristic $p$.
\end{lem}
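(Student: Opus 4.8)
The plan is to show, in one stroke, that \emph{any} coarsening $w$ of $v$ with imperfect residue field must equal $w_c$, the coarsest coarsening of $v$ of residue characteristic $p$; this yields at once the uniqueness, the identification, and --- since imperfect fields have positive characteristic --- the claim that $p>0$. (If $v$ has residue characteristic $0$ there is nothing to prove: every coarsening then has residue field of characteristic $0$, hence perfect.) Throughout I use that coarsenings of $v$ are in order-reversing bijection with the convex subgroups of $\Gamma_v$, hence linearly ordered by coarseness; that $p$-henselianity passes to coarsenings (and that the trivial valuation is henselian); and that if $w'$ is a coarsening of a coarsening $w$ of $v$, then $k_w$ is the residue field of a valuation on $k_{w'}$, which is non-trivial precisely when $w\neq w'$.

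First, the bookkeeping. Let $w$ be a coarsening of $v$ with $k_w$ imperfect. Then $\ch k_w=p>0$, so the valuation induced by $v$ on $k_w$ is equicharacteristic and $k_v$ too has characteristic $p$; thus $v$ has residue characteristic $p>0$ and $w_c$ is defined. Since $k_w$ has characteristic $p$, the coarsening $w$ has residue characteristic $p$, so $w_c$ is a coarsening of $w$; if $w=w_c$ we are done, so assume $w_c$ is strictly coarser than $w$. Then $k_w$ is the residue field of a \emph{non-trivial} valuation on $k_{w_c}$; and $k_w$, being imperfect, is infinite, so $k_{w_c}$ (having $k_w$ as a residue field) is infinite as well.

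The one substantial ingredient is an algebraic observation which is essentially the last step of the proof of \Cref{->pp}: \emph{if $F$ is a field of characteristic $p$ with $\wp(F)=F$ and $u$ is a non-trivial valuation on $F$, then the residue field of $u$ is perfect.} (Given a nonzero $\alpha$ in the residue field, a lift $a$ with $u(a)=0$, and $\pi\in F$ with $u(\pi)>0$, choose $y\in F$ with $\wp(y)=a\pi^{-p}$; then $b:=\pi y$ satisfies $b^p-\pi^{p-1}b=a$, which forces $u(b)=0$ and, on reduction modulo the maximal ideal, $\bar b^{\,p}=\alpha$.) Apply this with $F=k_{w_c}$: since $(K,w_c)$ is $p$-henselian and $k_{w_c}$ is infinite of characteristic $p$, the Artin-Schreier lifting lemma together with $K$ being \NIPn (it is a reduct of the \NIPn valued field $(K,v)$) yields $\wp(k_{w_c})=k_{w_c}$; therefore the residue field of the non-trivial valuation on $k_{w_c}$ found above --- namely $k_w$ --- is perfect, contradicting the imperfection of $k_w$. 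Hence $w=w_c$.

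The main obstacle here is conceptual rather than computational. The naive attempt is to contradict the imperfection of $k_w$ or of $k_v$ directly, but we are given no Artin-Schreier information about those fields; the point is that it is the \emph{intermediate} residue field $k_{w_c}$ which the lifting lemma delivers as Artin-Schreier closed --- $w_c$ being a coarsening of $v$ --- and that $k_w$ appears \emph{below} $k_{w_c}$ in the residue tower, a place where the algebraic observation forbids imperfection. (Note that $w_c$ need not even be externally definable, so the NIP-style route of adjoining it to the language and invoking Shelah's expansion theorem is closed to us; this is exactly why the explicit lifting lemma, which needs no definability of the coarsening, is used.) What remains is routine: locating $w$ correctly relative to $w_c$ among the convex subgroups, and disposing of the degenerate cases where a residue field is finite or $v$ is trivial.
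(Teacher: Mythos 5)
Your proof is correct and follows essentially the paper's own route: both arguments rest on explicit Artin-Schreier lifting through a $p$-henselian coarsening (to force Artin-Schreier closure of its residue field from \NIPn of $K$) combined with the observation from the proof of \Cref{->pp} that an AS-closed field carrying a non-trivial equicharacteristic-$p$ valuation has perfect residue. The only difference is packaging: the paper shows that a coarsening with imperfect residue can have no proper coarsening of residue characteristic $p$, whereas you compare $w$ directly with the coarsest such coarsening $w_c$ --- a contrapositive arrangement of the same argument.
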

This is a \NIPn version of {\cite[3.4]{AJ-NIP}}.
\begin{proof} If $p=0$, no coarsening of $v$ has imperfect residue field. Assume $p>0$.
  Let $w$ be a proper coarsening of $v$, name $k_w$ its residue. Suppose $k_w$ is of characteristic $p$. Then $(k_w,\overline{v})$ is a non-trivial equicharacteristic $p$ valued field. If its residue is imperfect, then $k_w$ is not AS-closed by the proof of \Cref{->pp}; then $K$ has \IPn as a pure field by explicit Artin-Schreier lifting.
  
  So, if $v$ has a coarsening with imperfect residue field, this coarsening can't in turn have any proper coarsening of residue characteristic $p$; thus the only coarsening of $v$ that could possibly have imperfect residue is the coarsest coarsening of residue characteristic $p$ (possibly trivial).
\end{proof}

\begin{prop}\label{->0p}
  Let $(K,v)$ be a \NIPn $p$-henselian valued field of mixed characteristic $(0,p)$. Then either \begin{enumerate*}\item\label{discrete}$(K,v_p)$ is finitely ramified and $(k_p,\overline{v})$ is SAMK or trivial, or \item $(k_0,\overline{v})$ is AMK.\end{enumerate*}
\end{prop}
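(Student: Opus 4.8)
The plan is to transcribe \cite{AJ-NIP}'s proof of the corresponding statement for NIP henselian valued fields, replacing every appeal to Shelah's expansion theorem by an appeal to explicit Artin--Schreier lifting. Write the standard decomposition around $p$ as $K \to k_0 \to k_p \to k_v$, with $u$ the rank-one mixed-characteristic part on $k_0$ (value group $\Delta_0/\Delta_p$), $w$ the equicharacteristic-$p$ part on $k_p$ (value group $\Delta_p$), and $v_p$ the coarsest coarsening of $v$ of residue characteristic $p$ (i.e.\ $K \to k_p$ in the decomposition); since coarsenings and residue valuations of $p$-henselian valuations are again $p$-henselian, each of these valuations is $p$-henselian. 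The proof splits, as in \cite{AJ-NIP}, according to whether $(K,v_p)$ is finitely ramified.

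The first step, used throughout, is that $k_p$ is Artin--Schreier closed. If $k_p$ is finite then $w$ is trivial and $(k_p,\overline v)$ is trivially valued; we claim this forces $(K,v_p)$ to be finitely ramified, since otherwise the saturation argument below already yields \IPn in $K$. If $k_p$ is infinite and not AS-closed, then lifting \Cref{locKSWH} through the $p$-henselian $v_p$ gives \IPn in $K$, a contradiction. So $k_p$ is AS-closed, and then, exactly as in the proof of \Cref{->pp}: $\Delta_p=w(k_p^\times)$ is $p$-divisible; the residue $k_v$ of $w$ is AS-closed, as AS-extensions of $k_v$ lift through the $p$-henselian $w$, and --- when $w$ is nontrivial --- perfect, since an imperfect $k_v$ would, via the $\wp$-polynomial argument in the proof of \Cref{->pp}, contradict $k_p$ AS-closed, so $k_v$ admits no finite extension of degree divisible by $p$; and $(k_p,w)$ is separably algebraically maximal, since by Ostrowski a proper immediate separable algebraic extension would contain an Artin--Schreier extension of $k_p$, of which there are none. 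Consequently, if $(K,v_p)$ is finitely ramified then $(k_p,\overline v)=(k_p,w)$ is SAMK (when $w$ is nontrivial) or trivially valued, which is the first alternative.

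Now suppose $(K,v_p)$ is not finitely ramified; the goal is that $(k_0,\overline v)$ --- with $\overline v$ the valuation induced on $k_0$ by $v$, of value group $\Delta_0$ and residue $k_v$ --- is AMK, i.e.\ Kaplansky and, since $\ch(k_0)=0$, algebraically maximal. That $k_v$ is perfect and AS-closed I would get by passing to a sufficiently saturated elementary extension $(K^{*},v^{*})$: the failure of finite ramification yields a nonzero convex subgroup $\Delta'$ of the value group of $v^{*}$ with $v^{*}(p)\notin\Delta'$, so the coarsening $w'$ of $v^{*}$ cut out by $\Delta'$ is $p$-henselian with residue a nontrivial equicharacteristic-$p$ valued field $(k^{*}_{w'},\overline{v^{*}})$; were $k_v$ imperfect or not AS-closed, then $k^{*}_{w'}$ would fail to be AS-closed (by the $\wp$-polynomial arguments of the proof of \Cref{->pp}), and lifting \Cref{locKSWH} through $w'$ would give \IPn in $K^{*}$, hence in $K$. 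Running this over all such $\Delta'$, and using that an AS-closed $k^{*}_{w'}$ has $p$-divisible value group $\Delta'$, one likewise forces $\Delta_0/\Delta_p$ to be $p$-divisible --- the point handled in \cite{AJ-NIP} through the rank-one part --- and, with $\Delta_p$ already $p$-divisible, $\Delta_0$ is $p$-divisible; so $(k_0,\overline v)$ is Kaplansky.

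The main obstacle is the remaining claim that $(k_0,\overline v)$ is algebraically maximal, which is exactly where --- as in \cite{AJ-NIP} --- the deeply ramified structure of the not-finitely-ramified case is genuinely required. By Ostrowski, a proper immediate algebraic extension of $(k_0,\overline v)$ would contain an immediate extension of degree exactly $p$, and the plan is to transport such a defect extension through the rank-one part $u$ into an Artin--Schreier extension of $k_p$, contradicting that $k_p$ is AS-closed; making this transport precise --- controlling the defect and exhibiting the Artin--Schreier pattern in $k_p$ --- is the delicate step. Granting it, the remainder is the valuation-theoretic bookkeeping of \cite{AJ-NIP} with Shelah's expansion theorem replaced throughout by the Artin--Schreier-lifting arguments above, and the proposition follows.
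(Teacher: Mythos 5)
Your handling of the finitely ramified alternative and of the Kaplansky part of the second alternative (Artin--Schreier closedness of $k_p$, $p$-divisibility of the value group, perfection and AS-closedness of $k_v$, all obtained by saturation plus explicit lifting through a $p$-henselian coarsening) follows the same route as the paper. The genuine gap is the algebraic maximality of $(k_0,\overline v)$: at that point you only state a plan --- transport a degree-$p$ immediate algebraic extension of $(k_0,\overline v)$ through the rank-one part into an Artin--Schreier extension of $k_p$ --- and you yourself flag that making this transport precise is the ``delicate step'' and leave it undone. This is not mere bookkeeping: $k_0$ has characteristic $0$, so a degree-$p$ immediate extension of $(k_0,\overline v)$ is not an Artin--Schreier extension of anything in sight, and neither this paper nor \cite{AJ-NIP} contains a mechanism converting such a defect extension into an AS-extension of $k_p$; the NIP-case proof does not proceed this way either. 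As written, the second alternative of \Cref{->0p} is therefore not proved.

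For comparison, the paper closes this step quite differently. It first proves that $k_p$ is perfect (a fact your sketch never establishes): one passes to an $\aleph_1$-saturated extension of $(K^*,v_p^*)$, where infinite ramification produces a proper coarsening of residue characteristic $p$, and applies \Cref{resperf}; perfection upgrades $(k_p^*,\overline{v^*})$ from separably algebraically maximal to algebraically maximal. Then, by saturation, $(k_0^*,\overline{v_p^*})$ is maximal --- its value group is $\mathds R$ --- hence defectless by \cite{AK-tame}; since $v^*$ is a composition of defectless valuations it is defectless, defectlessness is first-order, so $(K,v)$ and hence $(k_0,\overline v)$ is defectless, and defectlessness implies algebraic maximality. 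To complete your proof you would need either to reproduce this saturation-plus-composition argument (including the perfection of $k_p$) or to actually supply the defect-transport argument you allude to; the latter is precisely the part that cannot be waved through.
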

This is a \NIPn version of \cite[3.1]{AJ-NIP}.
\begin{proof}
  Consider $(k_p,\overline v)$. If its valuation is non-trivial, $k_p$ must be AS-closed, otherwise $K$ would have \IPn by explicit Artin-Schreier lifting. So, $(k_p,\overline v)$ is either SAMK or trivial by (the proof of) \Cref{->pp}.
  
  We now make the following case distinction: if $\Delta_0/\Delta_p$ is discrete, then $(K,v_p)$ is finitely ramified, and since we already know that $(k_p,\overline v)$ is SAMK or trivial, case \ref{discrete} holds. Otherwise, $\Delta_0/\Delta_p$ is dense. We go to an $\aleph_1$-saturated extension $(K^*,v^*)$ of $(K,v)$, and redo the standard decomposition there. $\Delta_0^*/\Delta_p^*$ is still dense (see \cite[Lem.~2.6]{AJ-NIP}), and by saturation, it is equal to $\mathds{R}$; in particular, $\Delta_0^*/\Delta_p^*$ is $p$-divisible. Now, as before, if $(k_p^*,\overline{v^*})$ is non-trivial, then it is SAMK. It is clearly non-trivial by saturation, since we assumed $(K,v_p)$ was infinitely ramified. Thus, $(k_0^*,\overline{v^*})$ is Kaplansky. We can state this in first order by saying that $k$ is perfect and AS-closed (the valuation $v$ is in our language for now), and that $\Gamma$ is roughly $p$-divisible, i.e. if $\gamma\in[0,v(p)]\subset\Gamma$, then $\gamma$ is $p$-divisible.
  
  Remains to prove that $(k_0,\overline{v})$ is algebraically maximal. First, we prove that $k_p$ is perfect. Consider the $p$-henselian valued field $(K^*,v_p^*)$ (so this time we have $v_p^*$ in the language, and not $v^*$) and an $\aleph_1$-saturated extension $(K',u')$ of it. Since $(K^*,v_p^*)$ is infinitely ramified, by saturation $u'$ admits a proper coarsening of residue characteristic $p$, so by \Cref{resperf}, its residue field is perfect; going down to $(K^*,v_p^*)$, this means $k_p^*$ is perfect. Since we already know that $(k_p^*,\overline{v^*})$ is separably algebraically maximal, because it is perfect we now know it is algebraically maximal.
  
  Now by saturation $(k_0^*,\overline{v_p^*})$ is maximal; in particular it is defectless, see \cite{AK-tame}. Now $v^*$ is a composition of defectless valuations, thus it is defectless (see \cite[Lem.~2.8]{AJ-NIP}). By \cite[Lem.~2.4]{AJ-NIP}, defectlessness is a first-order property, so $(K,v)$ is also defectless, and thus $(k_0,\overline{v})$ is defectless. Because defectlessness implies algebraic maximality, we conclude.
\end{proof}

This Theorem extends half of Anscombe-Jahnke's classification of NIP henselian valued fields. We thus have the following: 
\begin{cor}\label{nipres}
 Let $(K,v)$ be henselian and \NIPn. If $k_v$ is NIP, so is $(K,v)$.
\end{cor}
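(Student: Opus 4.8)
The plan is to feed \Cref{AJ1way} into the Anscombe--Jahnke classification of NIP henselian valued fields \cite{AJ-NIP}. Since $(K,v)$ is \NIPn as a valued field, its reduct to the pure field $K$ is \NIPn as well; and since $v$ is henselian, it is in particular $p$-henselian. So \Cref{AJ1way} applies and places $(K,v)$ into one of its four cases. On the other side, the Anscombe--Jahnke theorem tells us that $(K,v)$ is NIP as soon as $k_v$ is NIP --- which is our hypothesis --- and $(K,v)$ satisfies the structural trichotomy: either $(K,v)$ is of equicharacteristic and trivially valued or SAMK; or it is of mixed characteristic $(0,p)$ with $(K,v_p)$ finitely ramified and $(k_p,\overline v)$ satisfying that first clause; or it is of mixed characteristic $(0,p)$ with $(k_0,\overline v)$ AMK. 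So it suffices to check that each of the four cases of \Cref{AJ1way} falls into one of these three.

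Cases 2, 3 and 4 of \Cref{AJ1way} require nothing: case 2 (``equicharacteristic $p$, trivial or SAMK'') is a subcase of Anscombe--Jahnke's first clause; in case 3 one has $\ch(k_p)=p$, so $(k_p,\overline v)$ is itself of equicharacteristic $p$ and ``$(k_p,\overline v)$ satisfies case 2 of \Cref{AJ1way}'' is literally ``$(k_p,\overline v)$ satisfies Anscombe--Jahnke's first clause''; and case 4 is verbatim Anscombe--Jahnke's third clause. The one case deserving a comment is case 1, equicharacteristic $0$: here $(K,v)$ is henselian with residue characteristic $0$, hence defectless (the defect is a power of the residue characteristic by Ostrowski), hence algebraically maximal, and the Kaplansky conditions are vacuous in residue characteristic $0$; thus $(K,v)$ is (trivially valued or) SAMK, i.e.\ Anscombe--Jahnke's first clause holds.

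Putting the two halves together, in every case $(K,v)$ satisfies the structural part of the Anscombe--Jahnke criterion, and $k_v$ is NIP by assumption, so $(K,v)$ is NIP. There is no real obstacle here: all the substance is already in \Cref{AJ1way} and in \cite{AJ-NIP}, and the only care needed is the bookkeeping of the case lists --- in particular noticing that a henselian equicharacteristic-$0$ valued field is automatically SAMK.
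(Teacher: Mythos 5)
Your proposal is correct and follows the same route as the paper: apply \Cref{AJ1way} (using that henselian implies $p$-henselian) and then invoke the right-to-left direction of Anscombe--Jahnke's full classification to transfer NIP from $k_v$ to $(K,v)$. The extra bookkeeping you supply --- in particular noting that a henselian equicharacteristic-$0$ valued field is defectless and vacuously Kaplansky, hence SAMK --- is implicit in the paper's one-line argument and is a correct filling-in, not a different method.
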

\begin{proof}
 If $(K,v)$ is henselian, it is in particular $p$-henselian, and so we can apply \Cref{AJ1way} to it. But in all the cases of the theorem, we know that we have NIP transfer by Anscombe-Jahnke's full classification; this means that if $k_v$ is NIP, so is $(K,v)$. We need henselianity and not just $p$-henselianity for transfer to happen.
\end{proof}

\begin{cor}\label{coreqcon}
 \Cref{NIPncon}$\Leftrightarrow$\Cref{NIPnhvcon}; that is, if no strictly \NIPn pure field exist, no strictly \NIPn henselian valued field exist.
 
 In particular, both conjectures hold in algebraic extensions of $\Qp$.
\end{cor}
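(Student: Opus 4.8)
The plan is to prove the equivalence one implication at a time --- all the content is in the right-to-left direction, which reduces immediately to \Cref{nipres} --- and then to read the $\Qp$ statement off \Cref{PACNSC}. The implication \Cref{NIPnhvcon}$\Rightarrow$\Cref{NIPncon} is immediate and was already remarked on: the trivial valuation is henselian, so a strictly \NIPn pure field, trivially valued, is a strictly \NIPn henselian valued field. For the converse, assume \Cref{NIPncon} and let $(K,v)$ be a \NIPn henselian valued field; we show $(K,v)$ is NIP, which is exactly the assertion that no strictly \NIPn henselian valued field exists. The residue field $k$ is interpretable in $(K,v)$, hence \NIPn with its full induced structure and a fortiori \NIPn as a pure field (a reduct); by \Cref{NIPncon}, $k$ is NIP as a pure field. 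Since $(K,v)$ is henselian it is $p$-henselian, so \Cref{nipres} applies --- its hypothesis asks only that the residue field be NIP as a pure field, the argument passing through \Cref{AJ1way} and Anscombe-Jahnke's classification --- and yields that $(K,v)$ is NIP.

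For the last assertion, let $K$ be an algebraic extension of $\Qp$, necessarily infinite, and let $v$ be the unique extension of the $p$-adic valuation to $K$; algebraic extensions of henselian valued fields being henselian, $v$ is henselian. Suppose $(K,v)$ is \NIPn, and let $k$ be its residue field, an algebraic extension of $\Fp$; since $k$ is interpretable in $(K,v)$ it is \NIPn as a pure field. If $k$ is infinite, then $k$ is PAC, every infinite algebraic extension of a finite field being PAC; were $k$ not separably closed, \Cref{PACNSC} would force $k$ to have \IPn for all $n$, contradicting that $k$ is \NIPn. Hence $k$ is separably closed, in particular NIP; and if $k$ is finite it is NIP too. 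Either way \Cref{nipres} gives that $(K,v)$ is NIP, whence so is its reduct, the pure field $K$. Thus no algebraic extension of $\Qp$ is strictly \NIPn, either as a henselian valued field with its natural valuation or as a pure field --- for the latter one additionally uses that the $p$-adic valuation is definable in such a field, or that the valued field is already NIP by Ax-Kochen-Ershov, so that being \NIPn transfers from the pure field upward to $(K,v)$.

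The points requiring attention, none serious given the results in hand, are: that ``$k$ is NIP'' throughout means $k$ as a pure field, which interpretability followed by a reduct supplies, so the full induced structure on the residue field never has to be controlled; the classical input that infinite algebraic extensions of finite fields are PAC; and, only for the pure-field form of the $\Qp$ statement, the bookkeeping of moving from the pure field to the valued field. This last is the main --- and rather mild --- obstacle; apart from it, the proof is a direct assembly of \Cref{AJ1way}, \Cref{nipres} and \Cref{PACNSC}.
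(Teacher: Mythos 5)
Your proposal is correct and follows essentially the same route as the paper: one implication via the trivially valued field, the other via interpretability of the residue field plus \Cref{nipres}, and the $\Qp$ case via the trichotomy for algebraic extensions of $\Fp$ (finite, algebraically closed, or PAC and not separably closed, using \Cref{PACNSC}) together with definability of the $p$-adic valuation to pass between the pure field and the valued field. The only nuance is that the paper restricts the definability claim to \emph{non-algebraically closed} algebraic extensions of $\Qp$, handling the algebraically closed case trivially (it is stable, hence NIP), a case your final clause covers only implicitly.
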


\begin{proof}
 Indeed, if no strictly \NIPn pure field exist, the residue field of a \NIPn henselian valued field must be in fact NIP, and we conclude by \Cref{nipres}.
 
 Now consider algebraic extensions of $\Fp$. They are either finite, alebraically closed, or PAC and not separably closed; in the first two cases they are NIP, in the last they have \IPn for all $n$. So they are NIP iff they are \NIPn, and any henselian valued field with one of these extensions as residue field is \NIPn iff it is NIP.
 
 Lastly, in any (non-algebraically closed) algebraic extension of $\Qp$, the $p$-adic valuation is definable; thus they are \NIPn as pure fields iff they are \NIPn as a valued fields.
\end{proof}

In a follow-up paper, we will study transfer theorems and complete the proof of Anscombe-Jahnke's classification in the \NIPn context.

\section{NTP2 fields}\label{sec-ntp2}

\subsection{The tree property of the second kind}

\begin{dfn}
 A formula $\phi(x,y)$ is said to have the tree property of the second kind (TP2) if there are $(a_{ij})_{(i,j)\in\omega^2}$ and $k<\omega$ such that for any $i<\omega$, $\set{\phi(x,a_{ij})}{j<\omega}$ is $k$-inconsistent, but for any $f\colon\omega\rightarrow\omega$, $\set{\phi(x,a_{if(i)})}{i<\omega}$ is consistent.
 
 A formula is NTP2 if it doesn't have TP2, and a theory is NTP2 if all its formulas are NTP2.
\end{dfn}

 Note that NIP implies NTP2, but that \NIPn doesn't: the random graph is \NIP2 and NTP2, the triangle-free random graph is \NIP2 and TP2. Also, NTP2 is not preserved under boolean combinations.
 \begin{figure}[!htbp]
   $$\begin{matrix}
   \tikzmark{a00}\phi(x,a_{00}) & \phi(x,\tikzmark{a01}a_{01}) & \cdots^{\ \tikzmark{a02}}\\
   \phi(x,\tikzmark{a10}a_{10}) & \phi(x,a_{11}) & \cdots^{\ }\\
   \vdots & \vdots\tikzmark{a2} &\\
   &\text{\urg{consistent}}&\\
   \end{matrix}
$$
 
\begin{tikzpicture}[overlay,remember picture]
\draw[very thick, red] (a01)--(a10) -- (a2);
\node(e)[fit=(a00)(a02), draw=violet, rounded rectangle, inner sep=4pt] {};
\node[above=0.05cm of e, violet] {$k$-inconsistent};
\end{tikzpicture}
\caption{A TP2 pattern}
\end{figure}
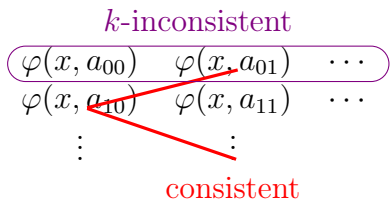
 
 \begin{ex}
 Bounded PAC, PRC and PpC fields are NTP2, see \cite{prc}.
 
 As pure rings, $\mathds Z$ and thus also $\mathds Q$ have TP2: in $\mathds Z$, the formula ``$x$ divides $y$ and $x\neq 1$'' has TP2. However its negation does not, since rows can't be $k$-inconsistent.
\end{ex}

\subsection{NTP2 fields}

\begin{thm}[\cite{CKS}]
 NTP2 fields of characteristic $p$ are AS-finite, also called $p$-bounded -- they have only finitely many distinct Artin-Schreier extensions.
\end{thm}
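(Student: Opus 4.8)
The plan is to deduce the theorem from the local statement contained in \Cref{locintro}, namely that the formula $\psi(x,yz)\colon\exists t\;x+z=y(t^p-t)$ has TP2 if and only if $K$ has infinitely many distinct Artin--Schreier extensions. Granting this, the theorem is immediate, in exactly the way \Cref{NIPAS} follows from \Cref{locKSW}: if $K$ is NTP2 then $\psi$ is NTP2, hence $K$ does not have infinitely many distinct Artin--Schreier extensions, i.e.\ it is AS-finite. So all the content sits in the local equivalence, and for the theorem one needs only the direction ``infinitely many Artin--Schreier extensions $\Rightarrow\ \psi$ has TP2'' (the converse is needed for the full equivalence but not here). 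Note that the extra coordinate $z$ in $\psi$, absent from the formula $\phi$ of \Cref{locintro} witnessing \IPn, is precisely what turns a uniformly definable family of \emph{subgroups} $y\wp(K)$ into a family of \emph{cosets}: a TP2 array requires rows that are genuinely inconsistent, which subgroups (all containing $0$) can never provide, whereas distinct cosets of a group are disjoint.

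So assume $K$ has infinitely many distinct Artin--Schreier extensions. Since these are parametrized by the one-dimensional $\Fp$-subspaces of $K/\wp(K)$, this means $\dim_{\Fp}\bigl(K/\wp(K)\bigr)=\infty$, and the same holds in a monster $\mathds M\succeq K$. Observe that $\psi(x;y,z)$ defines the coset $-z+y\wp(\mathds M)$ of the additive subgroup $y\wp(\mathds M)=\set{y(t^p-t)}{t\in\mathds M}$. I would build the TP2 array with parameters $a_{ij}=(c_i,z_{ij})$ for suitable $c_i\in\mathds M^{\times}$ and $z_{ij}\in\mathds M$: all entries of row $i$ share the twist $c_i$, so that the instances $\psi(x;c_i,z_{ij})$, $j<\omega$, cut out cosets of one fixed subgroup $H_i:=c_i\wp(\mathds M)$; choosing $z_{ij}-z_{ij'}\notin H_i$ for $j\neq j'$ (possible since $\mathds M/H_i\cong\mathds M/\wp(\mathds M)$ is infinite) makes every row $2$-inconsistent. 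For the consistency of paths it suffices, by saturation, that for every finite $F\subseteq\omega$ and every choice function $f$ the finite system $\set{x+z_{if(i)}\in H_i}{i\in F}$ be solvable in $\mathds M$; and this holds for any choice of the $z_{ij}$ as soon as the twists are chosen so that the $H_i=c_i\wp(\mathds M)$ are jointly independent, i.e.\ the diagonal map $\mathds M\to\prod_{i\in F}\mathds M/H_i$ is surjective for every finite $F$. With such twists in place, $\psi$ has TP2.

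The main obstacle is therefore producing the independent twists $c_i$ from the single hypothesis $\dim_{\Fp}(K/\wp(K))=\infty$: infinite dimension supplies many Artin--Schreier classes, but separating rows forces the scaled subgroups $c_i\wp(\mathds M)$ into general position, a condition about the interaction of the additive group $\wp(\mathds M)$ with multiplication. I would establish it by compactness: for each fixed $n$ the existence of $c_0,\dots,c_n\in\mathds M^{\times}$ with $\mathds M\to\prod_{i\le n}\mathds M/c_i\wp(\mathds M)$ surjective is first-order and can be witnessed from sufficiently many $\Fp$-independent Artin--Schreier classes of $K$, after which a sufficiently saturated monster realizes the type asserting this simultaneously for all $n$; alternatively one chooses the $c_i$ explicitly as suitably independent elements detecting distinct classes and verifies the relevant Chinese-remainder condition by hand. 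Once the array is constructed, the reduction of the first paragraph completes the proof.
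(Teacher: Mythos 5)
Your reduction is the right one and is the same shape as the paper's: the theorem follows once one knows that in a field with infinitely many Artin--Schreier extensions the coset formula $\psi(x;y,z)$ has TP2 (this is the hard direction of \Cref{locCKS}, equivalently the failure of the chain condition of \Cref{CKSeq} for the family $a\wp(K)$). The genuine gap is in how you propose to get that direction. You claim that from $\dim_{\Fp}(K/\wp(K))=\infty$ one can produce twists $c_0,\dots,c_n$ making the subgroups $H_i=c_i\wp(\mathds M)$ ``jointly independent'' (diagonal map onto $\prod_i \mathds M/H_i$ surjective), ``witnessed from sufficiently many $\Fp$-independent Artin--Schreier classes'' plus compactness. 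That step is unjustified, and it is exactly the non-trivial core of Chernikov--Kaplan--Simon (building on Kaplan--Scanlon--Wagner), which this paper explicitly defers to the original article (``goes by contraposition, and \dots takes a whole paper to be properly done''). $\Fp$-linear independence of classes in $K/\wp(K)$ is a statement about the quotient vector space and gives no control over how the multiplicatively scaled subgroups $c\wp(K)$ intersect one another: whether $[\,\bigcap_{j\neq i}c_j\wp(K):\bigcap_j c_j\wp(K)\,]$ is infinite is governed by the algebraic geometry of fibre products of Artin--Schreier covers (connectedness of the relevant commutative algebraic groups, their $p$-torsion, and a choice of $c_i$ algebraically independent in a suitable sense), not by linear algebra in $K/\wp(K)$. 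Compactness only transfers the statement to the monster once it is known for each finite $n$ in $K$; it cannot create it.

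Two smaller points. First, you ask for more than necessary: full Chinese-remainder surjectivity onto $\prod_i\mathds M/H_i$ is stronger than what the TP2 construction needs. It suffices that for each $i$ there are infinitely many cosets of $H_i$ witnessed by elements of $H_{\neq i}=\bigcap_{j\neq i}H_j$ (i.e.\ the CKS chain condition fails); vertical consistency then comes for free by taking finite sums of the chosen witnesses, since each witness for row $i'$ lies in every $H_i$ with $i\neq i'$. But even this weaker independence statement is the hard algebraic content, so weakening the target does not close the gap. Second, your framing of the easy direction (NTP2 theory $\Rightarrow$ $\psi$ NTP2 $\Rightarrow$ AS-finite) matches the paper, so the only missing ingredient is precisely the implication ``infinitely many Artin--Schreier extensions $\Rightarrow$ the family $a\wp(K)$ fails the chain condition,'' for which you would have to reproduce or cite the Kaplan--Scanlon--Wagner-style analysis rather than appeal to independence of classes and compactness.
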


Chernikov-Kaplan-Simon's argument is very similar to Kaplan-Scanlon-Wagner's. First, one needs to find a suitable chain condition for definable families of subgroups in NTP2 theories, and then apply it to the Artin-Schreier additive subgroup. Namely, instead of saying that the intersection of $N+1$ subgroups is the same as just $N$ of them, this condition is saying that the intersection of all but one of them is not quite the whole intersection, but is of finite index in it. Then, one shows that in a field $K$ with infinitely many Artin-Schreier extensions, the family $a\wp(K)$ fails this condition.

\subsection{Chernikov-Kaplan-Simon condition for NTP2 formulas}

\begin{thm}[{\cite[Lem.~2.1]{CKS}}]
  Let $T$ be NTP2, $\mathds M\satisf T$ a monster and suppose that $(G,\cdot)$ is a definable group\footnotemark. Let $\phi(x,y)$ be a formula, for $i\in\omega$ let $a_i\in\mathds M$ be such that $H_i=\phi(\mathds M,a_i)$ is a normal subgroup of $G$. Let $H=\bigcap_{i\in\omega}H_i$ and $H_{\neq j}=\bigcap_{i\neq j}H_i$. Then there is an $i$ such that $\left[H_{\neq i}\colon H\right]$ is finite.
\end{thm}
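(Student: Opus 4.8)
The plan is to argue by contraposition: assuming $[H_{\neq i}\colon H]$ is infinite for \emph{every} $i<\omega$, I will produce a TP2 pattern and thereby contradict NTP2 of $T$. The formula exhibiting the pattern is not $\phi$ itself but the twist $\psi(x;y,z)\colon\phi(xz^{-1},y)$, which is a genuine $\rond{L}$-formula precisely because $G$ is definable, so that multiplication and inversion are available --- this is the one place where definability of $G$ (and not merely ``$G$ is a group'') is used. For a parameter $(a_i,g)$ with $g\in G$, $\psi$ defines the coset $H_ig=\phi(\mathds M,a_i)g$; since $H_i\subseteq G$, every realization of $\psi(x;a_i,g)$ automatically lies in $G$, so no issue arises with elements outside $G$.

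Now I build the array. For each $i$, using $[H_{\neq i}\colon H]=\infty$ (note $H\subseteq H_{\neq i}$), choose elements $(g_{ij})_{j<\omega}$ of $H_{\neq i}$ lying in pairwise distinct cosets of $H$; the choices for different $i$ are completely independent, so no induction is needed. Set $a_{ij}:=(a_i,g_{ij})$, so that row $i$ consists of the cosets $H_ig_{ij}$, $j<\omega$. For $j\neq j'$ one has $H_ig_{ij}=H_ig_{ij'}$ iff $g_{ij}g_{ij'}^{-1}\in H_i\cap H_{\neq i}=H$, which fails by construction; hence any two entries of a row are contradictory and the rows are $2$-inconsistent.

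Path-consistency is the only step needing an idea. Given $f\colon\omega\to\omega$, I claim that the partial product $x_n:=g_{0f(0)}g_{1f(1)}\cdots g_{(n-1)f(n-1)}$ satisfies $\psi(x_n;a_i,g_{if(i)})$ for all $i<n$. Indeed, writing $x_n=A\cdot g_{if(i)}\cdot B$ where $A$ collects the factors $g_{lf(l)}$ with $l<i$ and $B$ those with $i<l<n$, every such factor has $l\neq i$, so $g_{lf(l)}\in H_{\neq l}\subseteq H_i$; thus $A,B\in H_i$, and $x_ng_{if(i)}^{-1}=A\cdot(g_{if(i)}Bg_{if(i)}^{-1})\in H_i$ using that $H_i$ is normal in $G$. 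Hence every finite fragment of the path type $\set{\psi(x;a_i,g_{if(i)})}{i<\omega}$ is satisfiable, so by compactness it is realized in $\mathds M$. This gives TP2 for $\psi$, the desired contradiction.

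The heart of the matter --- essentially the only non-bookkeeping point --- is the array itself: one must place the column parameters inside $H_{\neq i}$ so that the telescoping computation pushes each partial product into the correct coset, while \emph{simultaneously} keeping them in distinct cosets of $H$ so the rows stay inconsistent, and one must notice that normality of the $H_i$ is exactly what licenses the conjugation step $g_{if(i)}Bg_{if(i)}^{-1}\in H_i$. Granted those observations, the rest is coset arithmetic and compactness, and the constant $k=2$ works throughout.
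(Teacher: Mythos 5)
Your proof is correct and is essentially the same argument as the one the paper relies on, namely the original Chernikov--Kaplan--Simon proof it cites (and sketches in Step~1 of the proof of \Cref{CKSeq}): pick the witnesses $g_{ij}\in H_{\neq i}$ in pairwise distinct cosets of $H$, note that this makes the rows of cosets $H_i g_{ij}$ $2$-inconsistent, and get path consistency from telescoping products together with normality of the $H_i$. Nothing to add.
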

\footnotetext{In fact, as before, we do not care whether $G$ is a definable set, however, we need the group law to be definable, as it appears in the formula $\psi$.}

It turns out that, once again, we do not need $T$ to be completely NTP2: the proof goes by contradiction and shows that if this finite index condition is not respected, the formula $\psi(x;y,z)\colon\exists w\, (\phi(w,y)\et x=w\cdot z)$ has TP2. Thus we need only to assume NTP2 for this $\psi$. As in the NIP case for Baldwin-Saxl, we establish an equivalence between one specific formula being NTP2 and this condition.

\begin{rem}
 This condition says that in a given family of subgroups, one of them has finitely many distinct cosets witnessed by elements which lie in the intersection of every other subgroup. By compactness, we can cap this finite number, and consider only finite families: there is $k$ and $N$, depending only on $\phi$, such that given $k$ many subgroups defined by $\phi$, one of them has no more than $N$ cosets witnessed by elements in the intersection of the $k-1$ other subgroups. 
\end{rem}

\begin{por}[CKS-condition for fomulas]\label{CKSeq}
  Let $T$ be an $\rond{L}$-theory, $\mathds M\satisf T$ a monster and $(G,\cdot)$ a definable group. Let $\phi(x,y)$ be a formula such that for any $a\in M$, $H_a=\phi(\mathds M,a)$ is a normal subgroup of $G$. Let $\psi(x;y,z)$ be the formula $\exists w\, (\phi(w,y)\et x=w\cdot z)$. We will suppose for more convenience that $\cdot$, or rather, the formula defining $\set{x,y,z}{x\cdot y = z}$ contains, or at least implies, $x,y,z\in G$; thus $\psi$ doesn't hold if $z\notin G$. Then $\psi(x;yz)$ is NTP2 iff the CKS-condition holds: for any $(a_i)_{i\in\omega}$, there is $i$ such that $[H_{\neq i}\colon H]$ is finite, where $H=\bigcap_{i\in\omega}H_i$ and $H_{\neq j}=\bigcap_{i\neq j}H_i$.
\end{por}

Note that since $^{-1}$ is definable, $\psi(x;y,z)$ is equivalent to $\phi(x\cdot z^{-1},y)$.
\begin{proof}
  The formula $\psi(x;yz)$ holds iff $x\in H_y\cdot z$. Also, we use $H_i$ to denote $H_{a_i}$ and later $H_i^j$ to denote $H_{a_{ij}}$ because it is much more convenient.
  
  We work in four steps, but truly, only the fourth step is an actual proof, and it is technically self-sufficient. The raison d'être of step 1 to 3 is to -- hopefully -- make the proof strategy clearer.
  
 \paragraph{Step 1: true equivalence, from CKS.}
 In their paper, Chernikov, Kaplan and Simon prove that given some $(a_i)_{i\in\omega}$, if the family $H_i$ does not check the CKS-condition, then $\psi$ has TP2. They do this by explicitly witnessing TP2 by $c_{ij}=(a_i,b_{ij})$, with $a$ for $y$ and $b$ for $z$, and with $b_{ij}\in H_{\neq i}$. Reversing their argument, we prove the following equivalence:

 \emph{$\psi$ has TP2 witnessed by some $c_{ij}=(a_i,b_{ij})$ with $b_{ij}\in H_{\neq i}$ iff the family $H_i$ does not check the CKS-condition.}
 
 Right-to-left is exactly given by the original paper. Now let $a_i$ and $b_{ij}$ be as wanted. $\psi(x;c_{ij})$ says that $x\in H_i\cdot b_{ij}$. So the TP2-pattern is as follows:
 
 $$\begin{matrix}
   H_0b_{00} & H_0b_{01} & H_0b_{02} & H_0b_{03} & \cdots \\
   H_1b_{10} & H_1b_{11} & H_1b_{12} & H_1b_{13} & \cdots \\
   H_2b_{20} & H_2b_{21} & H_2b_{22} & H_2b_{23} & \cdots \\
\vdots & \vdots & \vdots & \vdots & \\
   \end{matrix}
$$

 For a given $i$, $k$-inconsistency of the rows says that a given coset of $H_i$ might only appear $k-1$ times. So there are infinitely many cosets of $H_i$, witnessed by elements $b_{ij}\in H_{\neq i}$. This means that $H\cdot b_{ij}=H\cdot b_{ij'}$ iff $H_i\cdot b_{ij}=H_i\cdot b_{ij'}$. But that gives infinitely many cosets of $H$ in $H_{\neq i}$, for any $i$, proving that CKS-condition is not checked.
 
 Note that we did not use at any time consistency of the vertical paths. We can use it to loosen our assumption. Let's keep in mind that our final goal is to prove this equivalence with $a$ depending on $i$ and $j$ (right now it depends only on $i$) and with $b_{ij}$ not necessarily lying in $H_{\neq i}$.
 
 \paragraph{Step 2: going outside $H_{\neq i}$.} We now want to prove:
 
 \emph{$\psi$ has TP2 witnessed by some $c_{ij}=(a_i,b_{ij})$ with iff the family $H_i$ does not check the CKS-condition.}
 
 We already know right-to-left. Let $c_{ij}=(a_i,b_{ij})$ witness TP2 for $\psi$. Consistency of the vertical paths implies that there is $\lambda\in\bigcap_{i\in\omega} H_i\cdot b_{i0}$. Now write $b'_{ij}=b_{ij}\cdot\lambda^{-1}$. Replacing $b$ by $b'$ won't alter TP2, but will ensure that $H_i b_{i0}=H_i$. So we might as well take $b'_{i,0}$ to be the neutral element of $G$.

 Fix $i,j$. Consider the vertical path $f=\delta_{ij}\colon\omega\rightarrow\omega$ such that $\delta_{ij}(i)=j$ and $
 \delta_{ij}(i')=0$ for $i'\neq i$. Consistency yields: $H_i\cdot b'_{ij}\cap \bigcap_{i'\neq i} H_{i'}=H_i\cdot b'_{ij}\cap H_{\neq i}\neq\emptyset$. Thus we can witness this coset of $H_i$ by an element $b''_{ij}\in H_{\neq i}$. Thus $c''_{ij}=(a_i,b''_{ij})$ still witnesses TP2.
 
  $$\begin{matrix}
   H\tikzmark{a0}_0 & H_0b_{01} & \cdots &&\\
   \vdots\tikzmark{a1} & \vdots &&& \\
   H_i & H_ib_{i1} & \cdots & H_i^{\tikzmark{a2}}b_{ij}& \cdots \\
   \vdots^{\tikzmark{a3}} & \vdots & \vdots & \vdots & \\
   \tikzmark{a4}&&&&\\
   \end{matrix}
$$
 
\begin{tikzpicture}[overlay,remember picture]
\draw[very thick] (a0)--(a1) -- (a2) --(a3)--(a4);
\end{tikzpicture}
 
 Thus, we reduced to the case in step 1, and we can drop the assumption on $b$. We still have to drop the assumption on $a$. We used $k$-inconsistency of rows in step 1, we used consistency of (some) vertical paths in step 2, we didn't yet use normality.
 
 \paragraph{Step 3: arbitrary $a$, 2-inconsistency.}
 An example of such a TP2 pattern in $\mathds{Z}$:
 
 $$\begin{matrix}
2\mathds{Z} & 4\mathds{Z}+1 & 8\mathds{Z}+3 & 16\mathds{Z}+7 & \cdots \\
3\mathds{Z} & 9\mathds{Z}+1 & 27\mathds{Z}+4 & 81\mathds{Z}+13 & \cdots \\
5\mathds{Z} & 25\mathds{Z}+1 & 125\mathds{Z}+6 & 625\mathds{Z}+31 & \cdots \\
\vdots & \vdots & \vdots & \vdots & \\
   \end{matrix}
$$

Note that none of these subgroups have infinitely many cosets, let alone in the intersection of the others! But, for any $N$, some of them will have more cosets than $N$.

We aim to prove the following, of which once again we know right-to-left:

\emph{There is some $c_{ij}=(a_{ij},b_{ij})$ forming a TP2 pattern for $\psi$, with rows 2-inconsistent, iff the family $H_i$ does not check the CKS-condition.}

Let $H_i^j$ be the subgroup $\phi(M,a_{ij})$. Suppose $\psi$ has TP2, witnessed by $c_{ij}=(a_{ij},b_{ij})$. As noted before, by compactness we do not need to find an infinite family such that every subgroup has infinitely many cosets in the intersection of the rest, but merely for each finite $m$ and $N$, a family of $m$ sugroups such that each of them has at least $N$ cosets in the intersection of the rest.

First, we apply the reduction as before: by consistency of vertical paths, we may take $b_{i0}$ to be the neutral element for each $i$. Then, looking at the path $f=\delta_{ij}$, we may assume $b_{ij}\in H_{\neq i}^0$.

\paragraph{Claim.} Let $N\in\omega$. For each $i$, there is $j$ such that $(b_{ij'})_{j'<\omega}$ witnesses at least $N$ cosets of $H_i^j$: $\#\set{H_i^jb_{ij'}}{j'\in\omega}\geqslant N$.
\par\ 
\par
Before proving this claim, let's see why it is enough for our purpose: let $N\in\omega$. For a fixed $i$, we find $j_i$ such that $H_i^{j_i}$ has $\geqslant N$ cosets witnessed by some $b_{ij}$. Now by vertical consistency, considering the path $\delta_{ij_i}$, we find an element $\lambda\in H_{\neq i}^0\cap H_i^{j_i}b_{ij_i}$. Compose everything by $\lambda^{-1}$, re-index the sequence by switching $c_{i0}$ and $c_{ij_i}$; this makes it so we can assume that $H_i^0$ has $\geqslant N$ many cosets in $H_{\neq i}^0$. When we compose by $\lambda$, nothing changes: $b$ and $b'$ generate the same coset of $H$ iff $b'b^{-1}\in H$ iff $(b'\lambda)(b\lambda)^{-1}\in H$. So we do this row by row, and we might assume that for any $i$, $H_i^0$ has $\geqslant N$ many cosets witnessed by elements from $H_{\neq i}^0$. This implies that some family will fail the CKS condition by compactness.
 
Now to prove the claim, fix $i$ and $N$. If there is $j$ such that $H_i^j$ has infinitely many cosets, witnessed in the row $i$, then we're done. Otherwise, for each $j$, all $H_i^j$ have finitely many cosets. We will reduce the problem in the following way:

$H_i^0$ has finitely many cosets in an infinite row, so by pigeonhole, one of them appears infinitely many times. Ignore all the rest, rename them; we may thus assume that $H_i^0b_{ij}=H_i^0b_{i1}$ for any $j\geqslant1$. We can do the same thing with any $j$, ensuring that $H_i^jb_{ik}=H_i^jb_{i,j+1}$ for any $k>j\in\omega$. Note that we only assume that cosets of a given $H_i^j$ witnessed by $b$ appearing after $j$ are identical, not before, since we already modified things before. In short, we have $b_{ij}b_{ik}^{-1}\in H_i^{j-1}$ for any $i,j$, and $k>j$.

Up to this point, we didn't use 2-inconsistency, so everything will still hold for the $k$-inconsistent case.
 
Because of 2-inconsistency, cosets of $H_i^j$ appearing before $j$ cannot be the same: let $j_1<j_2<j_3$. By our reduction, we have $b_{ij_3}b_{ij_2}^{-1}\in H_i^{j_1}$. Suppose furthermore that $b_{ij_2}b_{ij_1}^{-1}\in H_i^{j_3}$, so 2 cosets of $H_i^{j_3}$ appearing before $j_3$ are the same. Now $b_{ij_3}b_{ij_2}^{-1}b_{ij_1}=(b_{ij_3}b_{ij_2}^{-1})b_{ij_1}\in H_i^{j_1}b_{ij_1}$ on one hand, and $b_{ij_3}b_{ij_2}^{-1}b_{ij_1}=b_{ij_3}(b_{ij_2}^{-1}b_{ij_1})\in b_{ij_3}H_i^{j_3}=H_i^{j_3}b_{ij_3}$ by normality on the other hand, contradicting 2-inconsistency.
 
Thus, if we take $j\geqslant N$, we are sure that $H_i^j$ has $\geqslant N$ many cosets witnessed in the row $i$, proving the claim.
 
\paragraph{Step 4: $k$-inconsistency.} We now are ready to prove \Cref{CKSeq}. We already know one direction, so we now prove that if $\psi$ has TP2 witnessed by some $c_{ij}=(a_{ij},b_{ij})$, then the family $H_i$ does not check the CKS condition.
 
We follow the argument of step 3 until the point where 2-inconsistency enters the party. We aim to prove the claim. First, we fix $i$; since the argument now does not depend on $i$, we stop writing the subscripts $i$; readers attached to formal correctness are invited to take a pen and scribble them back in place.

Let $j_1<j_2<\cdots<j_{2k-1}\in\omega$. Suppose that $b_{j_1}$ and $b_{j_2}$ spawn the same coset of $H^{j_3},H^{j_5},\!\cdots\!,H^{j_{2k-1}}$, so $b_{j_1}b_{j_2}^{-1}\in H^{j_3}\cap H^{j_5}\cap\cdots\cap H^{j_{2k-1}}$. Similarly, suppose $b_{j_3}$ and $b_{j_4}$ spawn the same coset of all the odd indexed groups above them, and again for all the rest. Let $b=b_{j_1}b_{j_2}^{-1}b_{j_3}b_{j_4}^{-1}\cdots b_{j_{2k-3}}b_{j_{2k-2}}^{-1}b_{j_{2k-1}}$. We claim that $b\in H^{j_1}b_{j_1}\cap H^{j_3}b_{j_3}\cap\cdots\cap H^{j_{2k-1}}b_{j_{2k-1}}$, contradicting $k$-inconsistency: Fix $n\in\set{1,3,\!\cdots\!,2k-1}$. By the reduction, all the products $b_jb_{j'}^{-1}$ on the right of $b_{j_n}$ are in $H^{j_n}$, and by assumption, all the products on the left also. Thus $b=hb_{j_n}h'$, where $h,h'\in H^{j_n}$. So $b\in H^{j_n}b_{j_n}H^{j_n}$, and by normality we conclude.
 
Therefore, we know that as soon as $j_1<j_2<\cdots<j_{2k-1}$, there is a pair $b_{j_n}$, $b_{j_{n+1}}$, with odd $n$, that do not spawn the same coset of some $H^{j_n'}$, $j_{n'}>j_{n+1}$. We want to show that some $H^{j_n}$ must have at least $N$ many different cosets, for arbitrary $N\in\omega$.

Fix $N$. Let $j_{2k-1}>C$, where $C$ is a big enough constant we will explicit later. We construct a graph with $N$ vertices, which are the $j$ such that $j_{2k-1}-(N+1)<j<j_{2k-1}$, and $j,j'$ are connected iff $b_j$ and $b_{j'}$ generate \emph{different} cosets of $H^{j_{2k-1}}$. This forces $C\geqslant N$. If it is a complete graph, then $H^{j_{2k-1}}$ has at least $N$ many pairwise disjoint cosets, so we are done. Otherwise, there are $j_{2k-1}-(N+1)<j_{2k-3}<j_{2k-2}<j_{2k-1}$ such that $b_{j_{2k-3}}$ and $b_{j_{2k-2}}$ generate the same coset of $H^{j_{2k-1}}$.
 
We now look back $R_2(N)$ points before $j_{2k-3}$. Here we call $R_r(s)$ the smallest number $V\in\mathds{N}$ such that if a complete colored graph with $r$ many colors has at least $V$ many vertices, there's a monochromatic $s$-clique. $R_r(s)$ is guaranteed to exist for any $r,s\in\mathds N$ by Ramsey's theorem, see \cite{Ramsey}.

Since $j_{2k-3}>C-N$, we take $C\geqslant N+R_2(N)$. We construct a bi-colored graph with $R_2(N)$ vertices, which are the $j$ such that $j_{2k-3}-(R_2(N)+1)<j<j_{2k-3}$. $j,j'$ are connected by a blue edge iff $b_j$ and $b_{j'}$ generate 2 different cosets of $H^{j_{2k-3}}$, and they are connected by a red edge iff they generate different cosets of $H^{j_{2k-1}}$. They might be connected by both a red and blue edge at the same time, this does not break the argument. If you don't like when edges coincide, choose one color arbitrarily. As before, if this graph is complete, then by Ramsey's theorem, there must be a monochromatic $N$-clique, ensuring that one of $H^{j_{2k-1}}$ or $H^{j_{2k-3}}$ have at least $N$ many different cosets. Otherwise, we find a pair $j_{2k-5}<j_{2k-4}$ generating the same coset of both $H^{j_{2k-1}}$ and $H^{j_{2k-3}}$, we fix them, and continue.
 
We now construct a tri-colored graph with $R_3(N)$ vertices, corresponding to the $R_3(N)$ indices preceding $j_{2k-5}$, blue edge between vertices if they generate different cosets of $H^{j_{2k-1}}$, red if they generate different cosest of $H^{j_{2k-3}}$, green if they generate different cosets of $H^{j_{2k-5}}$. Again, by Ramsey's theorem, we either can find an $N$-clique, in which case we stop here, or we can find $j_{2k-7}$ and $j_{2k-6}$ not connected (hence generating the same coset of all of the previously fixed groups). This construction is illustrated in \Cref{tricolored}.

We continue doing this strategy for as long as we can; either we stop when we find a monochromatic $N$-clique, or we end up with $j_1<j_2<\cdots<j_{2k-1}$ such that all consecutive pairs generate the same coset of all subgroups above them; but as seen before, this contradicts $k$-inconsistency. Therefore, this process must stop before, which means we found a clique at some point, and that guarantees a subgroup with at least $N$ many different cosets.
 
 As for the value of $C$, the construction requires $C\geqslant N+R_2(N)+R_3(N)+\cdots+R_{k}(N)$, and any such $C$ works.
\end{proof}

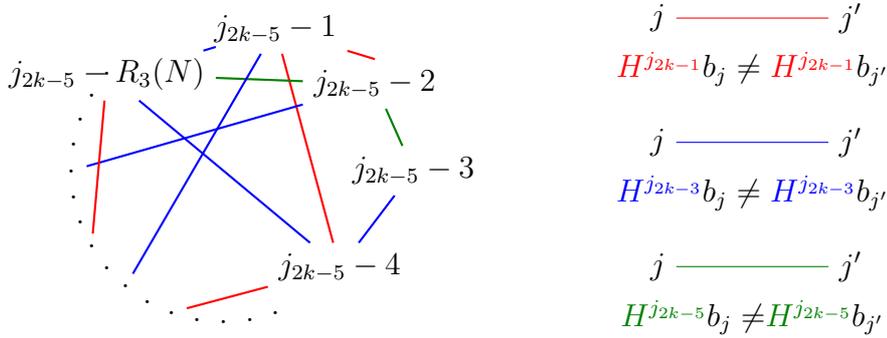
\begin{figure}[!htbp]
 \begin{center}
  \begin{minipage}{0.48\textwidth}
  \begin{center}
  \begin{tikzpicture}
   \foreach \a in {14,15,...,29}{
\draw (\a*10: 2cm) node(p\a){.};
}
\draw (140: 2cm) node(jR){$j_{2k-5}-R_3(N)$};
\draw (70: 2cm)  node(j1){$j_{2k-5}-1$};
\draw (30: 2.3cm)  node(j2){$j_{2k-5}-2$};
\draw (0: 2.5cm)  node(j3){$j_{2k-5}-3$};
\draw (-40: 2cm)   node(j4){$j_{2k-5}-4$};

\draw[thick,red] (j1)--(j4);
\draw[thick,green!50!black] (j2)--(jR);
\draw[thick,green!50!black] (j2)--(j3);
\draw[thick,blue] (j2)--(p18);
\draw[thick,red] (j4)--(p25);
\draw[thick,red] (jR)--(p21);
\draw[thick,blue] (j3)--(j4);
\draw[thick,blue] (jR)--(j4);
\draw[thick,blue] (j1)--(p23);
\draw[thick,red] (j1)--(j2.north);
\draw[thick,blue] (j1)--(jR.north east);
  \end{tikzpicture}
  \end{center}
  \end{minipage}
\begin{minipage}{0.48\textwidth}
\begin{center}
  \begin{tikzpicture}
  \node(j1){$j$};
  \node[right=2cm of j1](j'1){$j'$};
  \draw[-,red] (j1) to node[below=0.3cm]{$H^{j_{2k-1}}\color{black}b_j\neq\color{red}H^{j_{2k-1}}\color{black}b_{j'}$}(j'1);
  
  \node[below=1cm of j1](j2){$j$};
  \node[right=2cm of j2](j'2){$j'$};
  \draw[-,blue] (j2) to node[below=0.3cm]{$H^{j_{2k-3}}\color{black}b_j\neq\color{blue}H^{j_{2k-3}}\color{black}b_{j'}$}(j'2);
  
  \node[below=1cm of j2](j3){$j$};
  \node[right=2cm of j3](j'3){$j'$};
  \draw[-,green!50!black] (j3) to node[below=0.3cm]{$H^{j_{2k-5}}{\color{black}b_j\neq}H^{j_{2k-5}}{\color{black}b_{j'}}$}(j'3);
  \end{tikzpicture}

\end{center}
  \end{minipage}
 \end{center}
 \caption{After finding $j_{2k-5},\tpp,j_{2k-1}$, we connect the $R_3(N)$ many points $j_{2k-5}-1,\tpp,j_{2k-5}-R_3(N)$ with edges colored as indicated; we seek either a monochromatic $N$-clique or two non-connected points that we then name $j_{2k-6}$ and $j_{2k-7}$.}
 \label{tricolored}
\end{figure}

\begin{rem}
 CKS asked whether normality is a necessary assumption. In our proof as well as in theirs, it is useful to assume it, and doesn't seem avoidable. It seems to us that this assumption is necessary, but as of yet, no argument exists to assert or refute this claim.
\end{rem}

\subsection{Artin-Schreier finiteness of NTP2 fields}

\begin{cor}[Local CKS]\label{locCKS}
 In a field $K$ of characteristic $p>0$, the formula $$\psi(x;y,z)\colon\exists t\, x-z=y(t^p-t)$$ is NTP2 iff $K$ has finitely many AS-extensions.
\end{cor}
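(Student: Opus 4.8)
The plan is to deduce this from the Porism \Cref{CKSeq}, in exactly the same way \Cref{locKSW} (resp. \Cref{locKSWH}) is deduced from Baldwin--Saxl (resp. its \NIPn variant). I would apply \Cref{CKSeq} with $(G,\cdot)=(K,+)$ — a definable abelian group, so that \emph{every} subgroup is automatically normal and the normality hypothesis is free — and with $\phi(x,y)\colon\exists t\,x=y(t^p-t)$, so that $H_a=\phi(\mathds M,a)=a\wp(K)$ is an additive subgroup for each $a\in K$ (and $H_0=\{0\}$). The formula the Porism builds from this $\phi$, namely $\exists w\,(\phi(w,y)\et x=w+z)$, is logically equivalent to the formula $\psi(x;y,z)\colon\exists t\,x-z=y(t^p-t)$ of the statement; since being NTP2 depends only on the definable family, \Cref{CKSeq} transfers and gives: \emph{$\psi$ is NTP2 iff the family $(a\wp(K))_{a\in K}$ satisfies the CKS-condition}, i.e. for every $(a_i)_{i<\omega}$ there is $i$ with $[H_{\neq i}:H]$ finite, where $H=\bigcap_i H_{a_i}$ and $H_{\neq j}=\bigcap_{i\neq j}H_{a_i}$.

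It then remains to identify the CKS-condition for $(a\wp(K))_a$ with AS-finiteness. I would first recall from Artin--Schreier theory that $K$ has finitely many AS-extensions iff the $\mathds F_p$-vector space $K/\wp(K)$ is finite-dimensional, equivalently $m:=[K:\wp(K)]<\infty$. For the easy direction, assume $m<\infty$ and fix $(a_i)_{i<\omega}$. If some $a_{i_0}=0$, then $H_{\neq i}\subseteq H_{a_{i_0}}=\{0\}$ for every $i\neq i_0$, so $[H_{\neq i}:H]=1$. Otherwise all $a_i\neq0$; multiplication by $a_i$ is an additive automorphism of $K$ sending $\wp(K)$ to $H_{a_i}$, so $[K:H_{a_i}]=m$, and hence $[H_{\neq i}:H]=[H_{\neq i}:H_{\neq i}\cap H_{a_i}]\leqslant[K:H_{a_i}]=m<\infty$ for every $i$. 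Either way the CKS-condition holds, so $\psi$ is NTP2.

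For the converse I would simply invoke Chernikov--Kaplan--Simon: they prove that in a field $K$ with infinitely many AS-extensions the family $(a\wp(K))_{a\in K}$ fails the finite-index condition, i.e. there is a sequence $(a_i)_{i<\omega}$ with $[H_{\neq i}:H]$ infinite for every $i$; by \Cref{CKSeq} this means $\psi$ has TP2. This input — the failure of the finite-index condition in AS-infinite fields — is the substantive part, and is precisely the ``hard part'' deferred to \cite{CKS}, just as the analogous implications in \Cref{locKSW} and \Cref{locKSWH} are deferred to \cite{KSW} and \cite{H-KSWn} respectively; everything else is bookkeeping through the Porism. So the only genuine obstacle is already packaged as a cited black box, and the proof itself is short.
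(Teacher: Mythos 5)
Your proof is correct and follows essentially the same route as the paper: apply \Cref{CKSeq} with $(G,\cdot)=(K,+)$ and $H_a=a\wp(K)$, deduce the easy direction from $[K:\wp(K)]<\infty$ bounding the cosets of each $a\wp(K)$, and defer the hard direction (AS-infinite implies the family fails the finite-index condition) to \cite{CKS}. Your explicit handling of the degenerate parameter $a=0$ and of the equivalence between $\exists w\,(\phi(w,y)\et x=w+z)$ and $\psi$ is a small extra care the paper leaves implicit, not a different argument.
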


\begin{proof}
  Apply \Cref{CKSeq} with $(G,\cdot)=(K,+)$ and with $\phi(x,y)\colon\exists t\; x=(t^p-t)y$, which means ``$x\in y\wp(K)$''. If the formula is NTP2 then it checks CKS and thus $K$ has finitely many AS-extensions, by the original CKS argument -- which goes by contraposition, and again, takes a whole paper to be properly done. Now if $K$ has finitely many AS-extensions, then $\left[K\colon\wp(K)\right]$, as additive groups, is finite. Thus any additive subgroup of the form $a\wp(K)$ has finitely -- and boundedly -- many cosets in the whole $K$, so in particular in any intersection of any family. Thus CKS is checked and $\psi$ is NTP2.
\end{proof}

\begin{rem}\label{ditt}
 This is optimal, in the sense that NTP2 fields with an arbitrarily large number of Artin-Schreier extensions exist: given a profinite free group with $n$ generators, there exists a PAC field of characteristic $p$ having this group as absolute Galois group. Such a field will have finitely many Galois extension of each degree, that is, it is bounded and hence simple; but if one takes $n$ large enough, it will have an arbitrarily large number of Artin-Schreier extensions.
 
 On the other hand, fields with finitely many Artin-Schreier extensions can have TP2: consider a PAC field of characteristic $p$ which is unbounded for some $n\neq p$, and take its $p$-closure; still PAC, still unbounded, thus TP2; however, it has no Artin-Schreier-extension.
\end{rem}

We now discuss two applications of local CKS: one is, as for \NIPn, lifting complexity, and the other one is only a potential programme to obtain NTP2 of some fields, most notably, $\Fp((\mathds Q))$.

\subsection{Lifting}
Let $(K,v)$ be $p$-henselian of residue characteristic $p>0$. Shelah's expansion doesn't work in general in NTP2 theories, so adding coarsenings to the language might disturb NTP2. Note however that some weaker versions hold, for example \cite[Annex A]{ShexpNTP2}, where one needs to ensure that the value group is NIP and stably embedded before adding coarsenings to the theory. Meanwhile, we can apply the same trick as above to lift complexity and derive some conditions on NTP2 fields.

\begin{lem}
 Let $(K,v)$ be $p$-henselian of residue characteristic $p$ and suppose $k$ has infinitely many AS-extensions, then $K$ has TP2 witnessed by $\psi(x;y,z)\colon\exists t\; x-z=y(t^p-t)$.
\end{lem}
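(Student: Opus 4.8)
The plan is to transport a TP2-witnessing array from the residue field to $K$, exactly as in the \NIPn case above. Since $k$ has characteristic $p$, \Cref{locCKS} applies to it: as $k$ has infinitely many Artin--Schreier extensions, $\psi$ has TP2 over $k$. Fix an array $(b_{ij},c_{ij})_{i,j<\omega}$ over $k$ and an integer $m$ witnessing this, so that over $k$ each row $\set{\psi(x;b_{ij},c_{ij})}{j<\omega}$ is $m$-inconsistent while each vertical path is consistent; by the shape of the Chernikov--Kaplan--Simon construction (see the proof of \Cref{CKSeq}) we may take $b_{ij}=b_i$ depending only on $i$ and nonzero, and the cosets $c_{ij}+b_i\wp(k)$ pairwise distinct in each row, so in fact $m=2$. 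Pick a unit $\beta_i\in\mathcal{O}_v$ lifting $b_i$ and elements $\gamma_{ij}\in\mathcal{O}_v$ lifting $c_{ij}$, and consider the array $(\beta_i,\gamma_{ij})$ over $K$; I claim it witnesses TP2 for $\psi$.

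For fixed parameters with $\beta\in\mathcal{O}_v^{\times}$, the instance $\psi(x;\beta,\gamma)$ says that the separable polynomial $T^p-T-(x-\gamma)\beta^{-1}$ has a root; for $x,\gamma\in\mathcal{O}_v$ it lies in $\mathcal{O}_v[T]$, its reduction is $T^p-T-(\bar x-\bar\gamma)\bar\beta^{-1}$, and any root of it in $K$ already lies in $\mathcal{O}_v$ (a root of negative value would give the left-hand side negative value). So by $p$-henselianity we get, for $x\in\mathcal{O}_v$, that $K\satisf\psi(x;\beta,\gamma)$ iff $k\satisf\psi(\bar x;\bar\beta,\bar\gamma)$. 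Consistency of the vertical paths transfers immediately: a realization in $k$ of $\set{\psi(x;b_i,c_{if(i)})}{i<\omega}$ lifts to some $x_0\in\mathcal{O}_v$, which realizes $\set{\psi(x;\beta_i,\gamma_{if(i)})}{i<\omega}$ over $K$. And if some $x_0\in\mathcal{O}_v$ realized $\psi(x;\beta_i,\gamma_{ij})$ and $\psi(x;\beta_i,\gamma_{ij'})$ for $j\neq j'$ in one row, then $\bar x_0$ would realize $\psi(x;b_i,c_{ij})$ and $\psi(x;b_i,c_{ij'})$ over $k$, contradicting that these cosets are distinct.

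The step I expect to be the main obstacle is ruling out a common realization $x_0\notin\mathcal{O}_v$ of several instances of a fixed row: such a spurious witness lives entirely at the ``infinite'' part of $K$ and is invisible to reduction. Here $v(x_0)<0$ and $p\mid v(x_0)$, and each Artin--Schreier witness $t_j$ (with $\beta_i(t_j^p-t_j)=x_0-\gamma_{ij}$) satisfies $v(t_j)=v(x_0)/p<0$. In equicharacteristic $p$ this causes no trouble: $\wp$ is additive over $K$, so comparing the instances $j,j'$ gives $\beta_i\wp(t_j-t_{j'})=\gamma_{ij'}-\gamma_{ij}\in\mathcal{O}_v$, whence $t_j-t_{j'}\in\mathcal{O}_v$, and reducing modulo $\goth m_v$ yields $c_{ij'}-c_{ij}\in b_i\wp(k)$ — impossible, so $2$-inconsistency of the rows survives. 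In mixed characteristic $\wp$ is no longer additive, and one has to run a genuine valuation-theoretic argument: writing out $\beta_i(t_j^p-t_j)=x_0-\gamma_{ij}$ and comparing the expansions of $t_j^p$ and $t_{j'}^p$ term by term, using $v(p)>0$ to control the binomial coefficients, one first sees that all the $t_j$ must share the same leading residue, and then extracts strong constraints tying $v(x_0)$, the differences $\gamma_{ij}-\gamma_{ij'}$ and the lift $\beta_i$ together, which bound how many instances of one row a single $x_0\notin\mathcal{O}_v$ can satisfy (after, if necessary, adjusting the choice of the lifts $\beta_i$ and $\gamma_{ij}$). With the rows thus kept boundedly inconsistent over $K$, the array $(\beta_i,\gamma_{ij})$ witnesses TP2 for $\psi$.
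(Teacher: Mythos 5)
Your overall frame (transfer the TP2 array from $k$ to $K$ via $p$-henselianity) is the paper's, and parts of it are done correctly: the normalization of the residue pattern via the construction in \Cref{CKSeq}, the lifting of vertical consistency, the treatment of integral realizations by reduction, and the equicharacteristic-$p$ inconsistency argument using additivity of $\wp$ are all fine. But the lemma is needed (and later used, e.g.\ in \Cref{3.4} and \Cref{AJishNTP2->}) precisely when $K$ has characteristic $0$ and $v$ has residue characteristic $p$, and there your argument stops at a declaration of intent. For a common realization $x_0$ with $v(x_0)<0$ the witnesses satisfy $v(t_j)=v(x_0)/p$, which can be arbitrarily negative relative to $v(p)$; the binomial cross terms $\binom{p}{l}t_{j'}^{\,l}(t_j-t_{j'})^{p-l}$ then have valuation $v(p)+l\,v(t_{j'})+(p-l)\,v(t_j-t_{j'})$, which is \emph{not} controlled by $v(p)>0$ once $-v(t_j)$ exceeds roughly $v(p)/(p-1)$, so the comparison of instances does not reduce to any Artin--Schreier condition in $k$. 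The ``strong constraints'' that are supposed to bound how many instances a non-integral $x_0$ can satisfy are never extracted, and ``adjusting the choice of the lifts'' is left unspecified. This is exactly the step you flag as the main obstacle, and as written it is a genuine gap rather than a proof.

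The paper handles row inconsistency by a different mechanism that makes no case distinction on $\ch(K)$ and never uses additivity of $\wp$: fix the witnesses $t_1,\dots,t_m$ and view the $m$ conditions as the degree-one polynomials $f_l(x)=\alpha_{ij_l}(t_l^p-t_l)-\beta_{ij_l}-x$ having a common root; since $k$ is not AS-closed, choose a separable polynomial $d$ with no root in $k$, lift it to $\delta$ over $K$, and nest the homogenizations $D$ and $\Delta$, so that ``the family $(f_l)$ has a common root'' becomes ``the single polynomial $\Delta(f_1(x),\Delta(f_2(x),\cdots))$ has a root''; $m$-inconsistency is then transferred as rootlessness of one polynomial between residue field and field, using that $\Delta(z_1,z_2)=0$ forces $z_1=z_2=0$ because $\delta$ has no root in $K$. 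To repair your write-up you must either carry out the mixed-characteristic valuation analysis in full (which is not routine) or switch to an encoding of simultaneous solvability into a single root-free polynomial in the paper's style; without one of these, the mixed-characteristic inconsistency transfer — the case the applications actually need — remains unproven.
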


\begin{proof}
  Since $k$ has infinitely many AS-extensions, we know that there are $(a_{ij},b_{ij})_{i,j<\omega}$ in $k$ witnessing TP2 for $\psi$. Take any lift $\alpha_{ij}$, $\beta_{ij}$ in $K$, we claim that they witness a TP2 pattern for $\psi$ in $K$.
  \paragraph{Vertical consistency:} Let $f\colon\omega\rightarrow\omega$ be a vertical path. We know that there is $c$ in $k$ such that $k\satisf\psi(c;a_{if(i)}b_{if(i)})$ for all $i$.\footnotemark\  This means $a_{if(i)}(T^p-T)-c-b_{if(i)}$ has a root in $f$. Take any lift $\gamma$ of $c$, then $\alpha_{if(i)}(T^p-T)-\gamma-\beta_{if(i)}$ has a root in $K$ by $p$-henselianity, which means $K\satisf\psi(\gamma;\alpha_{if(i)},\beta_{if(i)})$.
  \footnotetext{This is only true if $K$ is $\aleph_1$-saturated, so let's assume it is.}
  \paragraph{Horizontal $m$-inconsistency:} let's name $P_{ij}(T,x)=a_{ij}(T^p-T)-b_{ij}-x$. Now the residue field $k\satisf\psi(c;a_{ij},b_{ij})$ iff $P_{ij}(T,c)$ has a root. Fix $i$ and $j_1,\!\cdots\!,j_m$. $m$-inconsistency means that for any choice of $t_1,\!\cdots\!,t_m$ and $c$, one of $P_{ij_l}(t_l,c)$ is not 0. Instead of fixing $x$ and pondering at $T$, let's fix $t_1$ to $t_m$ and name $f_l(x)=P_{ij_l}(t_l,x)$. $m$-inconsistency is equivalent to saying that for any choice of $t_l$, the family $(f_l)_{1\leqslant l\leqslant m}$ of polynomials can't have a common root.
  
  Since $k$ is not AS-closed, we can find a separable polynomial $d$ with no root in $k$. Write $d(z)=r_nz^n+\cdots+r_1z+r_0$, and fix a lift $\delta(z)=\rho_nz^n+\cdots+\rho_1z+\rho_0$ to $K$. $\delta$ also has no root in $K$. Let $D(z_1,z_2)=r_nz_1^n+r_{n-1}z_1^{n-1}z_2+\cdots+r_1z_1z_2^{n-1}+r_0z_2^n$ be the homogenized version of $d$ and similarly $\Delta(z_1,z_2)$ be the homogenized version of $\delta$.
  
  Now $D(z_1,z_2)=0$ iff $z_1=0=z_2$ by the choice of $d$, and same goes for $\Delta$. Let $f,g$ be two polynomials. Then $f,g$ have a common root iff $D(f(x),g(x))$ has a root. Thus we have $m$-inconsistency in $k$ iff the family $(f_l)_{1\leqslant l\leqslant m}$ has no common root in $k$ iff $D(f_1(x),D(f_2(x),\cdots))$ has no root in $k$ iff, by $p$-henselianity, $\Delta(f_1(x),\Delta(f_2(x,\cdots))$ has no root in $K$ iff the family $(f_l)_{1\leqslant l\leqslant m}$ has no common root in $K$, the latter exactly giving $m$-inconsistency of the pattern in $K$.
\end{proof}

Thus, given an NTP2 henselian field $(K,v)$, if we take a coarsening of $v$ with residue characteristic $p$, we know its residue field has finitely many AS-extensions, without having to ponder at external definability or anything.

\subsection{Semitameness}

Recently, Franz-Viktor Kuhlmann proved in \cite{FVK-ASfin} that valued fields of characteristic $p$ with finitely many Artin-Schreier extensions are \emph{semitame}, which is a notion he studied in detail in a joint paper with Anna Rzepka. In particular, contrary to the NIP case, where AS-closure implies defectlessness, NTP2 fields could have defect, only, no \emph{dependent} defect:

\begin{dfn}
 Let $(L,w)/(K,v)$ be a purely defect Galois extension of degree $p$. Let $\sigma\in\Gal(L/K)\setminus\set{\id}$. Consider the set $\Sigma=\set{w(\tfrac{\sigma(x)-x}{x})}{x\in L^\times}$. If there is a convex subgroup $\Delta\subset\Gamma$ such that $\Sigma=\set{\gamma\in\Gamma}{\gamma>\Delta}$, we call $(L,w)/(K,v)$ an independent defect extension. Otherwise, we call it a dependent defect extension.
\end{dfn}

\begin{dfn}\label{tamedef}
 A non-trivially valued field $(K,v)$ of residue characteristic $p$ is called semitame if $\Gamma$ is $p$-divisible, $k$ is perfect, and $(K,v)$ is defectless. Valued fields of residue characteristic 0 are always called semitame. Here we will furthermore let trivially valued fields, of any characteristic, be called semitame.
\end{dfn}

Note that tame implies semitame; in fact, a valued field is tame iff it is semitame, henselian and defectless. 

Semitameness is a first-order property, though this might not be clear if defined as we did; equivalent definitions can be found in \cite{FVK-ASfin}, as well as a proof of the following result:

\begin{thm}\label{ASfin->smtm}
 Let $(K,v)$ be a valued field of equicharacteristic $p$. If $K$ is AS-finite, then $(K,v)$ is semitame.
\end{thm}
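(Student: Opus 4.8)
The result is due to Kuhlmann \cite{FVK-ASfin}; below is a possible route to it, designed to make the value-group and residue-field parts elementary and to isolate defectlessness as the one part requiring genuine work.

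If $v$ is trivial there is nothing to prove, so assume $v$ is non-trivial; then by the definition of semitameness (\Cref{tamedef}) we must show that $\Gamma:=v(K^\times)$ is $p$-divisible, that the residue field $k$ is perfect, and that $(K,v)$ is defectless. Throughout I would use the elementary fact that $\wp(X)=X^p-X$ induces an additive endomorphism of $(K,+)$ with $v(\wp(t))=p\,v(t)$ whenever $v(t)<0$ and $v(\wp(t))\geq 0$ whenever $v(t)\geq 0$; consequently a nonzero $\Fp$-linear combination of elements of $K$ having pairwise distinct valuations that are all negative and outside $p\Gamma$ again has negative valuation outside $p\Gamma$, and hence does not lie in $\wp(K)$. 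Recall also (as in the proof of \Cref{locCKS}) that AS-finiteness of $K$ amounts to $K/\wp(K)$ being finite-dimensional over $\Fp$.

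For $p$-divisibility: suppose $\gamma\in\Gamma\setminus p\Gamma$, and, replacing $\gamma$ by $-\gamma$ if necessary, take $\gamma<0$. The set $\{n\in\mathds Z : n\gamma\in p\Gamma\}$ is a subgroup of $\mathds Z$ containing $p$ but not $1$, hence equals $p\mathds Z$, so $n\gamma\notin p\Gamma$ for every $n$ prime to $p$. Choosing $a_n\in K^\times$ with $v(a_n)=n\gamma$ for these $n$, the classes of the $a_n$ in $K/\wp(K)$ are $\Fp$-linearly independent by the remark above, so $K/\wp(K)$ is infinite-dimensional, contradicting AS-finiteness. For perfectness of $k$, I would run the argument in the proof of \Cref{->pp} but harvest infinitely many extensions instead of one. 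Suppose $\alpha\in k\setminus k^p$ with $\alpha\neq 0$, fix a lift $a\in K$ with $v(a)=0$, and observe: for any $d\in K$ with $v(d)>0$ one has $ad^{-p}\notin\wp(K)$, since a solution of $\wp(t)=ad^{-p}$ would force $v(t)=-v(d)$, and then $s:=dt$ satisfies $s^p-d^{p-1}s=a$ with $v(s)=0$, whose image in $k$ is $\bar s^p=\alpha$ (as $v(d^{p-1})>0$), which is impossible. Now pick $c_j\in K$ ($j<\omega$) of pairwise distinct positive valuation, which exist because $\Gamma$ is non-trivial. A nonzero $\Fp$-combination $\sum_j\lambda_j\,ac_j^{-p}$ equals $a\bigl(\sum_j\lambda_j c_j^{-1}\bigr)^p=ad^{-p}$ with $d=\bigl(\sum_j\lambda_j c_j^{-1}\bigr)^{-1}$ of positive valuation (no cancellation occurs, the $c_j^{-1}$ having distinct valuations), hence lies outside $\wp(K)$; so the $ac_j^{-p}$ span an infinite-dimensional subspace of $K/\wp(K)$, again contradicting AS-finiteness, and $k$ is perfect.

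Defectlessness is the substantial part, and I expect it to be the main obstacle; it is essentially the content of \cite{FVK-ASfin}. The plan is to use that in equicharacteristic $p$ every defect already occurs in a degree-$p$ subextension, that a degree-$p$ defect extension is necessarily immediate, and, once we know $\Gamma$ is $p$-divisible and $k$ is perfect, to reduce the separable case to a defect Artin-Schreier extension $L=K(\vartheta)$ with $\wp(\vartheta)=a\notin\wp(K)$. For such an extension one studies the set $\{v(a-\wp(x)):x\in K\}$, whose cofinality and position relative to the convex subgroups of $\Gamma$ governs the dependent/independent-defect dichotomy of the paper; the aim is then to perturb $a$ within, and slightly beyond, its class modulo $\wp(K)$ so as to manufacture an infinite $\Fp$-independent family in $K/\wp(K)$, contradicting AS-finiteness. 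Carrying this out rigorously, together with the reduction of an arbitrary defect to this situation and the separate handling of purely inseparable degree-$p$ extensions, rests on the structure theory of immediate extensions, pseudo-Cauchy sequences and defect extensions developed in \cite{FVK-ASfin} and in the work of Kuhlmann-Rzepka, and for this last step I would simply invoke \cite{FVK-ASfin}.
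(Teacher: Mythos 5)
Your two elementary arguments are correct and, in fact, go beyond the paper: for \Cref{ASfin->smtm} the paper offers no proof at all, it simply quotes the result from \cite{FVK-ASfin}. The key observation that a nonzero $\Fp$-combination of elements with pairwise distinct negative values outside $p\Gamma$ cannot lie in $\wp(K)$ is sound, and both applications work: $p$-divisibility via the values $n\gamma$ with $n$ prime to $p$ (restrict to positive $n$ so the values stay negative), and perfection via the elements $ac_j^{-p}$, which is essentially the trick already used in the proof of \Cref{->pp}, iterated to produce infinitely many classes in $K/\wp(K)$. So for those two clauses of \Cref{tamedef} your proposal is a genuine self-contained improvement on the paper's treatment.

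The gap is the third clause. As printed, \Cref{tamedef} demands defectlessness, but AS-finiteness does not imply defectlessness and no argument can: the separable closure $K$ of $\Fp((t))$ with its unique valuation is AS-closed (hence AS-finite), has value group $\mathds Q$ and residue $\overline{\Fp}$, and $K(t^{1/p})/K$ is a purely inseparable immediate extension of degree $p$, i.e.\ a defect extension. This is exactly where your sketch breaks: the ``separate handling of purely inseparable degree-$p$ extensions'' cannot be carried out, and a strategy that manufactures classes in $K/\wp(K)$ can only ever detect (dependent) Artin--Schreier defects. Nor does ``simply invoking \cite{FVK-ASfin}'' close it, because what Kuhlmann proves is semitameness in the Kuhlmann--Rzepka sense, whose third condition is the absence of \emph{dependent} defect (every degree-$p$ defect extension is independent), not the absence of defect; this is also what the paper actually uses, cf.\ its own remark that NTP2 fields may have defect but no dependent defect, the final corollary, and the otherwise redundant sentence ``tame iff semitame, henselian and defectless''. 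So the word ``defectless'' in \Cref{tamedef} should be read as a slip; with the corrected clause, your write-up should state the independence-of-defect part as the content of \cite{FVK-ASfin} (it is the substantial theorem there, not something your perturbation sketch reproves) and keep your elementary proofs of $p$-divisibility and perfection, which then matches and slightly strengthens the paper's citation-only proof.
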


We will also need the following lemma:

\begin{lem}[{\cite[Prop.~1.4]{gdr}}]\label{tamecompo}
 A composition of two semitame henselian valuation, each of residue characteristic $p$, is semitame.
\end{lem}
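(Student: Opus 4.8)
The plan is to verify directly that the composite valuation inherits each of the three defining properties of semitameness listed in \Cref{tamedef} (plus henselianity, which one gets for free), since all of these are stable under composition. I would set up notation by presenting the composite valuation $v$ as arising from a valuation $w$ on $K$ together with a valuation $\bar v$ on the residue field $k_w$ --- so $w$ is a coarsening of $v$, $\bar v$ is the valuation induced by $v$ on $k_w$, one has the exact sequence $0\to\Gamma_{\bar v}\to\Gamma_v\to\Gamma_w\to 0$ of value groups, and $k_v$ is canonically the residue field of $(k_w,\bar v)$. By hypothesis $(K,w)$ and $(k_w,\bar v)$ are both semitame, henselian, of residue characteristic $p$. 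If $v$ is trivial there is nothing to prove; otherwise $\ch k_w=p$ forces $\ch k_v=p$, so it remains to show that $\Gamma_v$ is $p$-divisible, that $k_v$ is perfect, and that $(K,v)$ is defectless.

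Two of these are immediate. Henselianity of $v$ is standard: a composite of two valuations is henselian precisely when both of its components are. The residue field $k_v$ is the residue field of $(k_w,\bar v)$, hence perfect by hypothesis. For $p$-divisibility I would use the displayed exact sequence together with the elementary fact that an extension of a $p$-divisible abelian group by a $p$-divisible abelian group is again $p$-divisible: given $\gamma\in\Gamma_v$, choose a lift so that its image in $\Gamma_w$ becomes a $p$-th multiple, then correct by an element of $\Gamma_{\bar v}$, which is itself $p$-divisible; since $\Gamma_w$ and $\Gamma_{\bar v}$ are $p$-divisible by semitameness (or trivial, a fortiori $p$-divisible), so is $\Gamma_v$.

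The one substantive step is defectlessness of $v$, and here I would simply quote \cite[Lem.~2.8]{AJ-NIP}, which says that a composition of defectless valuations is defectless. If one wished to see it hands-on: for a finite extension $L/K$, defectlessness of $(K,w)$ writes $[L:K]=\sum_j e(w_j)f(w_j)$ over the extensions $w_j$ of $w$ to $L$, with $f(w_j)=[k_{w_j}:k_w]$; applying defectlessness of $(k_w,\bar v)$ to each finite extension $k_{w_j}/k_w$ rewrites each such factor as a sum of its own local $ef$'s; and multiplicativity of ramification indices, together with the identification of the $v$-residue field of $L$ with an $\bar v$-residue field of some $k_{w_j}$, reassembles this into $[L:K]=\sum_i e(v_i)f(v_i)$ over all extensions $v_i$ of $v$, i.e.\ defectlessness of $(K,v)$. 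Once the three conditions are in hand, $(K,v)$ is semitame by \Cref{tamedef}. The only real obstacle is this last defect computation, but it is an off-the-shelf fact (Ostrowski-type multiplicativity of the defect), so there is no genuine difficulty; the remainder is bookkeeping.
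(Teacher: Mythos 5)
The paper gives no proof of \Cref{tamecompo} at all: it is imported wholesale from \cite[Prop.~1.4]{gdr}, the author's only contribution being the remark that both valuations must have residue characteristic $p$ for the statement to be true. Measured against that, your verification of the first two conditions is fine, and your $p$-divisibility step is exactly where the residue-characteristic-$p$ hypothesis does its work (drop it and you land on the author's counterexample: an equicharacteristic $0$ valuation with value group $\mathds Z$ composed with a mixed-characteristic tame one), so that part of your write-up is in the right spirit.

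The gap is in the third condition. You read \Cref{tamedef} literally, with ``defectless'' as the third clause, and dispose of it by multiplicativity of the defect via \cite[Lem.~2.8]{AJ-NIP}. Admittedly the printed definition invites this reading -- the word ``defectless'' there appears to be a slip -- but the notion this lemma is actually about, the one in \cite{gdr} and \cite{FVK-ASfin} and the one the surrounding text requires, does not include defectlessness: the author says explicitly, just before \Cref{tamedef}, that NTP2 fields may have defect, only no \emph{dependent} defect, and the sentence ``tame iff semitame, henselian and defectless'' only carries information if semitame fields are allowed defect. The correct third clause is a condition ruling out dependent defect (equivalently, one of the gdr-type characterizations), not defect altogether. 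This matters precisely where the lemma is used: in \Cref{AJishNTP2->} the field $(k_p,\overline v)$ is only known to be AS-finite, hence semitame via \Cref{ASfin->smtm}; it is not known to be defectless, so your ``composition of defectless valuations is defectless'' has no hypotheses to feed on. The substantive content of \cite[Prop.~1.4]{gdr} is that the defect condition itself is preserved under composition, which is not an off-the-shelf Ostrowski computation, and your proof does not touch it. As written you have proved a different lemma, with stronger hypotheses than the paper's application can supply.
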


Note that the statement by Kuhlmann and Rzepka that we reference is formulated for ``generalized deeply ramified'' fields (gdr) without restricting to residue characteristic $p$, and is then claimed to also hold in the semitame context; as stated, it is slightly wrong, as one needs to avoid some stupid counterexample: if $(K,v)$ is of equicharacteristic 0 with a non-divisible value group, say, $\mathds Z$, and $(k_v,w)$ is mixed-characteristic tame; then $(K,w\circ v)$ is not tame, nor semitame, because its value group is not $p$-divisible. Thus, Kuhlmann and Rzepka's proof appears to have a hidden assumption, namely, residue characteristic $p$, that we made explicit here. 

In fact, the definition of gdr fields is precisely made in order to be well behaved under composition, as well as to include finitely ramified fields which aren't tame but are still very well behaved. We will not define this notion here, instead, we refer to the aforementionned paper \cite{gdr}.

We prove a quick but very useful NTP2 version of \Cref{resperf}:

\begin{lem}\label{3.4}
 Let $K$ be NTP2, let $v$ be $p$-henselian of residue characteristic $p$, and suppose $k_v$ is imperfect; then $v$ is the coarsest valuation with residue characteristic $p$. In particular, there is at most one imperfect residue of characteristic $p$.
\end{lem}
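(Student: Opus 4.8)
The plan is to argue by contradiction, following the template of \Cref{resperf} but replacing the step ``imperfect residue $\Rightarrow$ not AS-closed'' by ``imperfect residue $\Rightarrow$ infinitely many AS-extensions'', which is exactly what Kuhlmann's theorem supplies. So assume $v$ is \emph{not} the coarsest valuation of residue characteristic $p$, and pick a proper coarsening $w$ of $v$ with $\ch(k_w)=p$. Then $w$ is again $p$-henselian (a coarsening of a $p$-henselian valuation is $p$-henselian), and the valuation $\overline v$ that $v$ induces on $k_w$ is non-trivial with residue field $k_v$. Thus $(k_w,\overline v)$ is a non-trivially valued field of equicharacteristic $p$ whose residue field $k_v$ is imperfect.

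The decisive step is then to note that $(k_w,\overline v)$ fails to be semitame: by \Cref{tamedef}, a non-trivially valued field of residue characteristic $p$ can be semitame only if its residue field is perfect, and $k_v$ is not. Hence the contrapositive of \Cref{ASfin->smtm} applies to $(k_w,\overline v)$ and shows that $k_w$ is not AS-finite, i.e.\ it has infinitely many Artin-Schreier extensions. Now $(K,w)$ is $p$-henselian of residue characteristic $p$ whose residue field has infinitely many AS-extensions, so the TP2-lifting lemma of this section yields that $K$ has TP2 as a pure field, contradicting that $K$ is NTP2. This contradiction shows that $v$ has no proper coarsening of residue characteristic $p$, that is, $v$ is the coarsest such valuation.

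For the ``in particular'' clause: the coarsenings of $v$ are linearly ordered, and by what we just proved $v$ is the only one among them of residue characteristic $p$; since residue fields of characteristic $0$ are automatically perfect, $v$ is in fact the unique coarsening of $v$ with imperfect residue field.

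I do not expect a genuine obstacle here. The one thing to get right conceptually is that the NIP/\NIPn argument does \emph{not} transfer verbatim, because NTP2 only forbids \emph{infinitely many} Artin-Schreier extensions rather than all of them; bridging that gap is precisely the role of \Cref{ASfin->smtm}, and the remaining ingredients (passage of $p$-henselianity to coarsenings, triviality/non-triviality of induced valuations, linear ordering of coarsenings) are routine bookkeeping.
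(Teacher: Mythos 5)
Your proof is correct and follows essentially the same route as the paper: take a proper coarsening $w$ of residue characteristic $p$, observe that $(k_w,\overline v)$ is a non-trivially valued equicharacteristic $p$ field with imperfect residue and hence not semitame, conclude via (the contrapositive of) \Cref{ASfin->smtm} that $k_w$ has infinitely many Artin-Schreier extensions, and lift TP2 to $K$, contradicting NTP2. Your added remarks (that $w$ remains $p$-henselian, and the explicit treatment of the ``in particular'' clause) are fine but do not change the argument.
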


\begin{proof}
  Suppose $w$ is a non-trivial proper coarsening of $v$ with residue characteristic $p$. Then $(k_w,\overline{v})$ is a non-trivial equicharacteristic $p$ valued field with imperfect residue. By \Cref{ASfin->smtm}, since semitame fields have residue perfect, $k_w$ is not semitame and thus has infinitely many AS-extensions. But, by AS-lifting, that means $K$ has TP2. Thus $v$ can't have any proper coarsening of residue characteristic $p$.
\end{proof}

We combine all this with the standard decomposition around $p$, written in terms of places $K\xrightarrow{v_0}k_0\xrightarrow{\overline {v_p}}k_p\xrightarrow{\overline v}k_v$ as in \Cref{standec}, and obtain:

\begin{prop}\label{AJishNTP2->}
 Let $K$ be NTP2 and $v$ be $p$-henselian, where $p=\ch(k)$. Then $(K,v)$ is either
 \begin{enumerate}
  \item of equicharacteristic $p$ and semitame, or
  \item\label{this} of mixed characteristic with $(k_0,\overline v)$ semitame, or
  \item of mixed characteristic with $v_p$ finitely ramified and $(k_p,\overline v)$ semitame.
 \end{enumerate}
 In particular, $(K,v)$ is gdr.
\end{prop}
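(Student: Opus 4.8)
The plan is to reduce \Cref{AJishNTP2->} to the tools already assembled: AS-lifting for NTP2 (the preceding Lemma), Kuhlmann's \Cref{ASfin->smtm}, \Cref{3.4}, the standard decomposition of \Cref{standec}, and the composition lemma \Cref{tamecompo}. First I would dispose of the case $p=0$: then $K$ has equicharacteristic $0$ and $(K,v)$ is semitame by definition, so case (1) applies vacuously after observing that residue characteristic $0$ fields are always called semitame; but in fact the statement assumes $p=\ch(k)$, so if $\ch(k)=0$ we are in the equicharacteristic $0$ branch and done. So assume $p>0$.

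Next, the heart of the argument: run the standard decomposition $K\xrightarrow{v_0}k_0\xrightarrow{\overline{v_p}}k_p\xrightarrow{\overline v}k_v$ around $p$, where $\overline{v_p}$ has rank $1$, $\ch(k_0)=0$ and $\ch(k_p)=p$. The first observation is that $(k_p,\overline v)$ must be semitame. Indeed $(k_p,\overline v)$ is an equicharacteristic $p$ valued field; if $\overline v$ is non-trivial on $k_p$, then $k_p$ is the residue field of a $p$-henselian valuation on $K$ (a coarsening of $v$, namely $\overline{v_p}\circ v_0$) of residue characteristic $p$, hence by AS-lifting $k_p$ must have finitely many AS-extensions (otherwise $K$ has TP2), so by \Cref{ASfin->smtm} $(k_p,\overline v)$ is semitame; if $\overline v$ is trivial on $k_p$, it is semitame by convention. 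Similarly $(K,v_p)$ — viewing $v_p=\overline{v_p}\circ v_0$ as a valuation on $K$ (here $v_p$ is the coarsest coarsening of residue characteristic $p$, of rank $1$ over $k_0$) — its residue $k_0$ together with $\overline{v_p}$: I would argue that $k_0$ is AS-finite too, since $k_0$ is the residue of the $p$-henselian coarsening $v_0$, so $(k_0,\overline{v_p})$ is semitame by \Cref{ASfin->smtm}, provided we also know $\ch(k_0)=0$ makes it automatically semitame — wait, that is the catch, since $(k_0,\overline v_p)$ has residue characteristic $p$ (its residue is $k_p$), so semitameness is a real condition. This is where \Cref{ASfin->smtm} and AS-lifting do real work: $k_0$ has finitely many AS-extensions because otherwise the $p$-henselian valued field $(K,v_0)$ would lift TP2 to $K$; then $(k_0,\overline{v_p})$ is semitame.

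Now the case split, following Anscombe–Jahnke's skeleton. In equicharacteristic $p$ there is no mixed-characteristic decomposition; instead $v$ itself is an equicharacteristic $p$ valuation, $K$ is AS-finite (else TP2 lifts through $v$), so $(K,v)$ is semitame by \Cref{ASfin->smtm} — case (1). In mixed characteristic, distinguish whether $\overline{v_p}$ (the rank-$1$ piece, equivalently $v_p$ over $k_0$) has discrete or dense value group. If $v_p$ is finitely ramified, then combined with $(k_p,\overline v)$ semitame we land in case (3). If $v_p$ is infinitely ramified — so $\Delta_0/\Delta_p$ is dense — I would need to deduce that $(k_0,\overline v)$, where now $\overline v$ is the composition $\overline v\circ\overline{v_p}$ on $k_0$ with residue $k_v$, is semitame. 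Here I expect the main obstacle: $(k_0,\overline v)$ is a composition of $(k_0,\overline{v_p})$ and $(k_p,\overline v)$, both of residue characteristic $p$, both semitame (shown above), both henselian (inherited from $p$-henselianity of $v$, or at least one can pass to a saturated extension and use that semitameness and finite ramification are first-order as in the NIP proof of \Cref{->0p}); so \Cref{tamecompo} gives that the composition $(k_0,\overline v)$ is semitame — case (2). The delicate point, as in \Cref{resperf} and \Cref{->0p}, is the henselianity hypothesis needed for \Cref{tamecompo}: $p$-henselianity of $v$ does not obviously pass to the pieces, so I would likely argue as Anscombe–Jahnke do — either the relevant piece is already handled by case (3), or one works in an $\aleph_1$-saturated elementary extension where the coarsening $v_p$ acquires a proper coarsening of residue characteristic $p$, forcing (via \Cref{3.4}) perfect residues and letting one invoke \Cref{tamecompo} there, then descend by first-orderness of semitameness. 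Finally, ``$(K,v)$ is gdr'' follows because in each case $(K,v)$ is built by composing a (possibly trivial) equicharacteristic $0$ part with semitame pieces of residue characteristic $p$ and a finitely ramified part, all of which are gdr, and gdr is closed under such compositions by \Cref{gdr}; I would simply cite the composition behaviour of gdr fields from \cite{gdr} to conclude.
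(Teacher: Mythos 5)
Your handling of the equicharacteristic case, of $(k_p,\overline v)$, and of the finitely ramified case matches the paper: lift a TP2 pattern for $\psi$ through the $p$-henselian coarsening of residue characteristic $p$, conclude that the relevant residue field is AS-finite, and apply \Cref{ASfin->smtm}. The genuine gap is your treatment of $(k_0,\overline{v_p})$. You argue that $k_0$ is AS-finite (``else $(K,v_0)$ would lift TP2'') and then invoke \Cref{ASfin->smtm} to get semitameness of $(k_0,\overline{v_p})$. Neither step is available: $v_0$ has residue characteristic $0$, so the lifting lemma (which requires residue characteristic $p$) says nothing, and Artin--Schreier extensions are a characteristic-$p$ notion while $\ch(k_0)=0$; moreover \Cref{ASfin->smtm} is stated only for valued fields of \emph{equicharacteristic} $p$, whereas $(k_0,\overline{v_p})$ has mixed characteristic $(0,p)$. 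The conclusion you want from it is in fact false without the infinite-ramification hypothesis: for $(\Qp,v_p)$, which is NIP and hence NTP2, one has $(k_0,\overline{v_p})=(\Qp,v_p)$ with value group $\mathds Z$, not $p$-divisible, hence not semitame --- this is precisely why case (3) is listed separately. And even in the infinitely ramified case, density of $\Delta_0/\Delta_p$ by itself gives neither $p$-divisibility of the value group nor defectlessness, so the key properties cannot be obtained the way you propose.

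What the paper does instead for case (2) is the saturation argument you only gesture at for henselianity reasons: pass to an $\aleph_1$-saturated $(K^*,v^*)\succcurlyeq(K,v)$, where $\Delta_0^*/\Delta_p^*$ is still dense and, by saturation, equal to $\mathds R$, and where $(k_0^*,\overline{v_p^*})$ is maximal, hence defectless; perfection of $k_p$ is then obtained by taking a sufficiently saturated elementary extension $(L,u)$ of $(k_0,\overline{v_p})$, in which $u$ acquires a proper coarsening of residue characteristic $p$, so that \Cref{3.4} applies; all of these are first-order statements and descend. Only with divisible value group, defectlessness and perfect residue in hand is $(k_0,\overline{v_p})$ semitame, and \Cref{tamecompo} then composes it with the semitame $(k_p,\overline v)$ to yield case (2). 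Finally, your closing appeal to a composition lemma for gdr fields cites a cross-reference the paper does not contain; the gdr conclusion is read off directly from the three cases (semitame and finitely ramified valuations, and their relevant compositions, are gdr, cf.\ \cite{gdr}). So the skeleton of your case distinction is right, but the central claim --- semitameness of $(k_0,\overline{v_p})$ in the infinitely ramified case --- is unsupported as written and needs the maximality/saturation route rather than Artin--Schreier finiteness of $k_0$.
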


\begin{proof}
 Most cases follow directly from \Cref{ASfin->smtm} and Artin-Schreier lifting as for the \NIPn case, we only give details for case \ref{this}.
 
 Let $(K,v)$ be of mixed characteristic such that $v_p$ is infinitely ramified, that is, $\Delta_0/\Delta_p$ is dense. This is an elementary statement, that is, going to $(K^*,v^*)\geqcur(K,v)$ sufficiently saturated and doing the standard decomposition in this new structure, $\Delta^*_0/\Delta^*_p$ remains dense; see \cite[Lem.~2.6]{AJ-NIP}. Furthermore, $(k^*_0,\overline {v^*_p})$ is defectless and has value group $\mathds R$. These facts come directly from saturation, see \cite{AK-tame}.
 
 By Artin-Schreier lifting, $k_p$ is AS-finite, and thus $(k_p,\overline v)$ is semitame. Finally, an argument similar to the aforementioned proof allows us to obtain perfection of $k_p$: going to yet another sufficiently saturated elementary extension $(L,u)$ of $(k_0,\overline{v_p})$ -- in a language of valued fields --, we know that the value group has a proper convex subgroup below $u(p)$; thus there is a non-trivial coarsening of $u$ with residue characteristic $p$, and by \Cref{3.4} $k_u$ is perfect. This is a first-order statement, so $k_p$ is also perfect.
 
 So, $(k_0,\overline{v_p})$ is defectless, has divisible value group, and perfect residue, thus it is semitame; and $(k_p,\overline v)$ is semitame. By \Cref{tamecompo}, $(k_0,\overline v)$ is semitame, as wanted.
\end{proof}

\begin{cor}
 Let $(K,v)$ be $p$-henselian, of mixed characteristic, and infinitely ramified. If $K$ is NTP2, then $(K,v)$ is roughly $p$-divisible, of perfect residue, and has no dependent defect extension.
\end{cor}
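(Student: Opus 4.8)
The plan is to read this off from \Cref{AJishNTP2->} together with the standard decomposition. Since $(K,v)$ is $p$-henselian, NTP2, of mixed characteristic, and $v_p$ is infinitely ramified by hypothesis, the first and third alternatives of \Cref{AJishNTP2->} are excluded, so we are in the second one: $(k_0,\overline v)$ is semitame. Recall from \Cref{standec} that $v$ factors as $v=\overline v\circ v_0$, where $v_0$ is the coarsening of $v$ with value group $\Gamma/\Delta_0$ and residue field $k_0$ (of characteristic $0$, so $(K,v_0)$ is of equicharacteristic $0$), and $\overline v$ denotes the induced valuation on $k_0$, which has value group $\Delta_0$ and residue field $k_v$. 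So it suffices to transfer the three ingredients of semitameness of $(k_0,\overline v)$ up to $(K,v)$.

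First, perfect residue: $k_v$ is by construction the residue field of $(k_0,\overline v)$, hence perfect by semitameness. Second, rough $p$-divisibility: by definition of the standard decomposition one has $v(p)\in\Delta_0\setminus\Delta_p$, and $\Delta_0$ is convex, so $[0,v(p)]\subseteq\Delta_0$; since $\Delta_0$ is precisely the value group of $(k_0,\overline v)$, it is $p$-divisible, and therefore every $\gamma\in[0,v(p)]$ is $p$-divisible in $\Gamma$. Third, the defect condition: $(K,v_0)$ is of equicharacteristic $0$ and hence defectless, so by multiplicativity of the defect in a tower (see \cite[Lem.~2.8]{AJ-NIP}) a purely defect Galois extension of degree $p$ of $(K,v)$ would induce, after passing through the residue map of $v_0$, a purely defect Galois extension of degree $p$ of $(k_0,\overline v)$; as the latter is semitame it is deeply ramified and so has no dependent defect extension, and since the quotients $\tfrac{\sigma(x)-x}{x}$ are computed the same way before and after the equicharacteristic-$0$ coarsening $v_0$, the original extension is independent too. (Alternatively, and more cheaply: $(K,v)$ is gdr by \Cref{AJishNTP2->}, and gdr fields have no dependent defect extension by \cite{gdr}.)

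I expect the first two points to be pure bookkeeping with the standard decomposition. The one place that needs genuine care is the defect statement, namely checking that the dependent/independent dichotomy for a degree-$p$ defect extension is detected already at the level of $(k_0,\overline v)$ and is neither created nor destroyed by composing with the equicharacteristic-$0$ coarsening $v_0$; once the appropriate composition/descent lemma for defect extensions is quoted from \cite{gdr} (or \cite{FVK-ASfin}) this is immediate, and if one reads ``semitame'' with the defectlessness clause of \Cref{tamedef} then $(K,v)$ is in fact outright defectless and the point is vacuous.
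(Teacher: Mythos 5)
Your proposal is correct and follows essentially the route the paper intends: the corollary is stated as an immediate consequence of \Cref{AJishNTP2->}, where mixed characteristic and infinite ramification exclude the first and third alternatives, so $(k_0,\overline v)$ is semitame, and then rough $p$-divisibility and perfection of $k_v$ are read off from $[0,v(p)]\subseteq\Delta_0$ and from $k_v$ being the residue of $(k_0,\overline v)$, exactly as you do, while the defect statement is meant to come from the ``in particular, $(K,v)$ is gdr'' clause together with Kuhlmann--Rzepka's result that gdr fields admit only independent defect extensions. Your hands-on descent of the dependent/independent dichotomy through the equicharacteristic-$0$ coarsening $v_0$ is the only sketchy step (it needs the composition results for defect from \cite{gdr} rather than just ``the quotients are computed the same way''), but since you also give the gdr fallback, which is precisely the paper's argument, nothing essential is missing.
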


 \bibliographystyle{plain}
\bibliography{../bigbib/mtvf.bib,../bigbib/algbooks.bib,../bigbib/mt.bib,../bigbib/vfbooks.bib,../bigbib/mtbooks.bib,../bigbib/misc.bib,../bigbib/alg.bib}
\end{document}